\DeclareMathAlphabet{\pazocal}{OMS}{zplm}{m}{n}
\newcommand{\LL}{\mathcal{L}}
\newcommand{\Q}{\pazocal{Q}}
\newcommand{\E}{\pazocal{E}}
\newcommand{\I}{\pazocal{I}}
\newcommand{\Pol}{\mathcal{P}}
\newcommand{\T}{\mathcal{T}}
\newcommand{\R}{\mathbf{R}}
\newcommand{\Curve}{\mathcal{C}}
\newcommand{\refeq}{\eqref}
\def\rank{\mathrm{rank}}
\def\sign{\mathrm{sign\,}}
\def\deg {\mathrm{deg\,}}
\newtheorem{theorem}{Theorem}[section]
\newtheorem{proposition}[theorem]{Proposition}
\newtheorem{corollary}[theorem]{Corollary}
\newtheorem{lemma}[theorem]{Lemma}
\newtheorem{remark}[theorem]{Remark}
\newtheorem{example}[theorem]{Example}
\newtheorem{definition}[theorem]{Definition}
\newenvironment{proof}[1][Proof]{\noindent\textit{#1.} }{\hfill$\Box$\newline\medskip}
\numberwithin{equation}{section}
\title{Resonance of ellipsoidal billiard trajectories and extremal rational functions}
\author[1,3]{Vladimir Dragovi\'c}
\author[2,3]{Milena Radnovi\'c}
\affil[1]{\textsc{The University of Texas at Dallas, Department of Mathematical Sciences}}
\affil[2]{\textsc{The University of Sydney, School of Mathematics and Statistics}}
\affil[3]{\textsc{Mathematical Institute SANU, Belgrade}}
\affil[ ]{\texttt{vladimir.dragovic@utdallas.edu, milena.radnovic@sydney.edu.au}}
\date{}
\begin{document}

\maketitle

\begin{abstract}
We study resonant billiard trajectories  within quadrics in the $d$-dimensional Euclidean space. We relate them to the theory of approximation, in particular the extremal rational functions on the systems of $d$ intervals on the real line. This fruitful link enables  us to prove fundamental properties of the billiard dynamics and to provide a comprehensive study of a large class of non-periodic trajectories of integrable billiards. A key ingredient is a functional-polynomial relation of a generalized Pell type. Applying further these ideas and techniques to $s$-weak billiard trajectories, we come to a functional-polynomial relation of the same generalized Pell type.

\emph{Keywords:} Ellipsoidal billiards; resonant trajectories;  $s$-weak Poncelet trajectories; Cayley-type conditions; extremal rational functions;  elliptic and hyper-elliptic curves; caustics; generalized Pell's equations.

\emph{AMS subclass:} 14H70, 41A10, 70H06, 37J35, 26C05
\end{abstract}
\newpage
\tableofcontents

\section{Introduction}\label{sec:intro}
	{\color{black}
	Mathematical billiard is a dynamical system where a particle moves along straight lines within certain bounded domain in the space, and obeys \emph{billiard reflection law} when it hits the boundary {\color{black} \cite{KTBilliards, Tab}}.
It is clear that the behaviour of such a system crucially depends on the shape of the domain. {\color{black} 
Billiards within quadrics are of a special interest because of their rich geometric and dynamical properties and, in particular, \emph{integrability}. While the two-dimensional systems of billiards within conics, and especially ellipses, have been intensively studied from various perspectives, see for example \cite{bol, KTBilliards, DragRadn2011book, DR1, ADSK, KS, bm, bm1, glu1} and references therein, the amount of research of their higher-dimensional generalizations is significantly smaller. 
In this paper, our main object are the billiard systems within quadrics in dimensions higher than two.} }	

The study of billiards within an ellipsoid in three-dimensional space started with Darboux in 1870,
see \cite{Dar1870}. After a long gap, the interest to the subject renewed starting from 1990's,
see \cites{MV1991, CCS1993, DragRadn1998a, DragRadn1998b,  Wiersig2000, Fed2001, BDFRR2002, WDullin2002,  DragRadn2004, AbendaFed2006, DragRadn2006jmpa, DragRadn2008, AbendaGrin2010, DragRadn2011book,CRR2011, DragRadn2012adv, RR2014}.

{\color{black}
One of the natural first questions when studying billiards is to find about their periodic trajectories.
For ellipsoidal billiards, according to the generalized Poncelet theorem \cites{Dar1870,CCS1993}, if there is a periodic billiard trajectory within ellipsoid, then each other trajectory sharing the same \emph{caustics} will be periodic with the same period.
The existence of caustics for ellipsoidal billiard is a geometric manifestation of the integrals of motion: any trajectory of billiard within an ellipsoid in the $d$-dimensional Euclidean space has $d-1$ caustics, which are quadrics confocal with the boundary.

Analytic conditions for periodicity are, in the planar case, the classical Cayley's conditions \cites{Cayley1854,GrifHar1978}, while for higher dimensions they were first derived in \cites{DragRadn1998a,DragRadn1998b}, see also \cite{DragRadn2004}.
The relationship of those conditions with polynomials was explored in \cite{RR2014}, and further deepened} in \cite{DragRadn2018}, where the connection between periodic trajectories of billiards within ellipsoid in $d$ dimensional Euclidean space and extremal polynomials on $d$ intervals on the real line
was fully exploited, {\color{black}enabling us to closely investigate the behaviour of the rotation numbers and led to} a complete classification of periodic trajectories, see also \cites{DragRadn2019rcd, ADR2019rcd, ADR2020rj}. {\color{black}Namely, the condition for $n$-periodicity of the billiard trajectories in the $d$-dimensional case} was related to the existence {\color{black} of} a pair of real polynomials
$\hat{p}_n$, $\hat{q}_{n-d}$
of degrees $n$ and $n-d$ respectively such that \emph{the polynomial Pell equation} holds:
\begin{equation}\label{eq:pell}
\hat{p}_n^2(s)-\hat{\mathcal{P}}_{2d}(s)\hat{q}_{n-d}^2(s)=1,
\end{equation}
where $\hat{\mathcal{P}}_{2d}(s)$ is a polynomial of degree $2d$ determined by the axes of the boundary ellipsoid and the caustics of the trajectory. Apparently, the polynomials  $\hat{p}_n$ are Chebyshev polynomials, the extremal polynomials on the system of $d$ intervals of the real line, where the endpoints of the intervals are the zeros of the polynomial $\hat{\mathcal{P}}_{2d}(s)$.

Thus, one can conclude that those existing research results provided a well-rounded understanding of periodic ellipsoidal
billiard trajectories.
The goal of this paper is to {\color{black}deepen the insight into} \emph{nonperiodic} trajectories.
{\color{black} As noted above,} the billiards in higher dimensions are much less studied than their planar counterparts.
{\color{black}One of the reasons contributing to increased complexity {\color{black} of billiards in dimensions three and higher} is the existence of skew lines, making} the study
of higher-dimensional billiards both challenging and interesting.

{\color{black}
Start with the following example of the classification of billiard trajectories.
Noticing that any pair of lines in the space can be a pair of coinciding lines, coplanar lines, or skew lines, we note that any billiard trajectory belongs exactly to one of the following classes:
\begin{itemize}
	\item periodic trajectories; or
	\item nonperiodic trajectories that have a pair of nonconsecutive coplanar segments;
	\item trajectories where all pairs of nonconsecutive segments are skew.
\end{itemize}
For {\color{black} the dimension $d\ge3$, we refined the last class in \cites{DragRadn2008} using the following property (see \cite{Tjurin1975} and also \cite{DragRadn2011book}):}
For any pair of lines $\ell$, $\ell'$ in the $d$-dimensional space, which are touching the same set of $d-1$ confocal quadrics, we can assign a set of at most $d-2$ lines: $\ell_1$, \dots, $\ell_j$, $j\le d-2$, such that:
\begin{itemize}
 \item $\ell_1$, \dots, $\ell_j$ are touching the same $d-1$ quadrics as $\ell$, $\ell'$;
 \item in the sequence $\ell$, $\ell_1$, \dots, $\ell_j$, $\ell'$, each two consecutive lines are coplanar.
\end{itemize}
Based on the number $j$ of such lines  $\ell_1$, \dots, $\ell_j$, in \cite{DragRadn2008}, we introduced classes of skew lines, invented the concept of \emph{weak periodic} ellipsoidal billiard trajectories, and derived analytic conditions for such trajectories.
}

Our {\color{black}aim} in this paper is to
{\color{black}initiate investigation of} \emph{resonant} billiard trajectories  within ellipsoids in $d$-dimensional Euclidean space  and to relate them, {\color{black}on one side}, to the theory of approximation, in particular the extremal \emph{rational} functions on the systems of $d$ intervals on the real line, {\color{black}and on the other side, to weak perodicity which originated in \cite{DragRadn2008}}.

 A key ingredient in this work is a functional-polynomial relation of a \emph{generalized} Pell type:
\begin{equation}\label{eq:genPell}
{\color{black}
A^2(z)-\prod_{j=1}^{2d}(z-c_j)B^2(z)=S_m(z)^2.}
\end{equation}
Note that, in this generalized equation, there is the square of a polynomial on the right-hand side, while the original polynomial Pell equation \ref{eq:pell} has just a constant instead.

The link we provide enables us to derive fundamental properties of the billiard dynamics and to provide a comprehensive study of a large class of non-periodic trajectories of such billiard systems.


\subsection{The organization of the paper}

This paper is organized as follows.

The following Section \ref{sec:Em} starts with recalling the notion of $(E, m)$-representation from
{\color{black}\cite[Section 5.5.4]{KLN1990}}
and a brief review of the relationship with the extremal polynomials on $d$ real intervals. Then, in Section \ref{sec:EP} we formulate the main new extremal problem, so-called {\it restricted extremal problem} for rational functions of bounded degree of the denominator on a system of $d$ real intervals. The main result of that section is Theorem \ref{th:pell-unique} which provides the rigidity and uniqueness conditions for a system of $d$ intervals to admit solutions of the restricted extremal problem with a prescribed degree of the denominator $g_m$, i.e.~which allows a solution of a generalized Pell equation of a given form, see equation \eqref{eq:genPell}, with the degree of the polynomial $S_m$ being equal to $g_m$. In Lemma \ref{lemma:alternance} we derive the cardinality of alternance sets for the extremal rational functions.

In Section \ref{sec:rresonant} we introduce the resonance of an ellipsoidal billiard trajectory and associated winding numbers, see Definition \ref{def:r-resonant}. {\color{black} Theorem \ref{th:r-resonant} provides a generalized Pell's equation \eqref{eq:r-resonant} which relates the degree
of the polynomial on the  left hand side with the resonance of trajectories with given caustics.}
Definition \ref{def:adjoint} introduces the adjoint resonance, the adjoint winding numbers {\color{black} and the adjoint type of a path of a billiard trajectory}. Proposition \ref{th:winding}
proves that the adjoint winding numbers are strictly decreasing, while Theorem \ref{th:length-periodic} proves that all periodic trajectories with given caustics have the same Euclidean length and provides the formula for the length. 
In Section \ref{sec:uniquenesscaustics}, we prove the uniqueness result for the caustics of the trajectories with a given adjoint resonance, see Theorem \ref{th:signature-caustics}. The proof is based on the so-called Audin Alternative as formulated in Lemma \ref{lemma:same-type-caustics}  and  Theorem \ref{th:pell-unique} from Section \ref{sec:EPEm}.

Section \ref{sec:sweak} is devoted to the weak periodicity, {\color{black}a} natural stratification of non-periodic
ellipsoidal billiard trajectories from \cite{DragRadn2008}, see Definition \ref{def:s-skew-d} in Section \ref{sec:sweakdPell}. The main result is Theorem \ref{the:pell-weak-d} which characterizes weak periodic trajectories in terms of generalized Pell equations \eqref{eq:pell-weak-d}. Both resonant trajectories from Section \ref{sec:rresonant} and weak periodic trajectories from Section \ref{sec:sweak} are characterized by the generalized Pell equations of the same form, see equation (\ref{eq:genPell}). A slight difference is that there are more restrictive conditions on locations of zeros of the involved polynomials in Theorem \ref{th:r-resonant} than in Theorem \ref{the:pell-weak-d}. As a result, we get Theorem \ref{the:resonant-weak-d} with the inequality \eqref{eq:resonanat-skew}.
The last two {\color{black}subsections} are devoted to examples of weak periodic trajectories in dimensions $3$ and $4$.

\section{The extremal problem and $(E, m)$-representation}\label{sec:EPEm}
We first review the notion of $(E, m)$-representation from {\color{black}\cite[Section 5.5.4]{KLN1990}} and then formulate the main extremal problem considered in this paper.

\subsection{$(E, m)$-representation}\label{sec:Em}

Let $c_{2d}<c_{2d-1} < \dots <c_1$ be real numbers and $\pazocal{P}(z)$ a real polynomial of degree $2n$ which is positive on the set
$E
=\cup_{j=1}^{d}[c_{2j}, c_{2j-1}].
$
We say that such a polynomial admits an \emph{$(E, m)$-representation}, {\color{black} $m\ge d$}, if there are  real polynomials $A_m(z)$ and {\color{black}$B_{m-d}(z)$} of degrees $m$ and $m-d$ respectively such that:
\begin{itemize}
	\item
	$
\pazocal{P}(z)
=
A_m^2(z)-\prod_{j=1}^{2d}(z-c_j)B_{m-d}^2(z);
$
\item
all zeros of $A_m(z)$ and {\color{black}$B_{m-d}(z)$} are {\color{black} real,} simple, and belong to $E$;
\item in each of the $d$ {\color{black}open} intervals {\color{black} $(c_{2j}, c_{2j-1})$, $j=1, 2, \dots, d,$} polynomials $A_m(z)$ and $B_{m-d}(z)$ satisfy the following:
\begin{itemize}
	\item the polynomial $A_m(z)$ has one zero more than {\color{black}$B_{m-d}(z)$}; and
	\item the zeros of those two polynomial alternate: between any two zeros of $A_m(z)$ there is exactly one zero of $B_{m-d}(z)$.
\end{itemize}
\end{itemize}

{\color{black}
\begin{remark} These requirements imply that $A_{m}(c_j)\ne 0$, $B_{m-d}(c_j)\ne 0$ for all $j=1,\dots, 2d$.
\end{remark}
}

We quote one relatively recent characterization of polynomials which admit $(E, m)$-representation.

\begin{theorem}[Krein, Levin, Nudelman,
	{\color{black}\cite[Th. 5.13]{KLN1990}}]\label{th:kln}
Let $\pazocal{P}(z)$ be a polynomial of degree $2n$ which is positive on $E$,
and the numbers $N_k$ determined by the following $d-1$ relations:
$$
\frac{1}{2\pi}\int_E\frac{z^j \ln \pazocal{P}(z)}{\sqrt{T(z)}}dz
=\sum_{k=1}^{d-1}
(-1)^kN_k
\int_{c_{2(d-k)+1}}^{c_{2(d-k)}}\frac{z^j}{\sqrt{|T(z)|}}dz
+ (-1)^d m\int_{c_1}^\infty \frac{z^j}{\sqrt{|T(z)|}}dz,
$$
where $j\in\{0, 1,\dots, d-2\}$ and $T(z)= \prod_{j=1}^{2d}(z-c_j)$.

Then $\pazocal{P}(z)$ admits an $(E, m)$-representation if
and only if the numbers $N_k$ are positive integers.
Moreover, then each $N_k$ represents the number of zeros of $A_m(z)$ in the open interval $(0, c_{2(d-k+1)-1})$.
\end{theorem}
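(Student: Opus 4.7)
The plan is to work on the hyperelliptic curve $\Gamma:\, y^2=T(z)$ of genus $g=d-1$ and translate the $(E,m)$-representation into a principal-divisor condition on $\Gamma$. Setting $\phi(z,y):=A_m(z)+y\,B_{m-d}(z)$, the identity $\pazocal{P}(z)=A_m^2-T\,B_{m-d}^2$ is precisely the factorization $\pazocal{P}(z)=\phi\cdot(\phi\circ\sigma)$, where $\sigma$ is the hyperelliptic involution. A local count at the two points $\infty_\pm$ over $z=\infty$ shows that $\phi$ has pole of order $m$ at each of them, so its zero divisor has degree $2m=2n$ and consists of one choice of preimage $P_i=(\zeta_i,\pm\sqrt{T(\zeta_i)})$ over each root $\zeta_i$ of $\pazocal{P}$, while $\sigma(P_i)$ is picked up by $\phi\circ\sigma$. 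Hence the existence of an $(E,m)$-representation is equivalent to the existence of a principal divisor of the form
\begin{equation*}
D \;=\; \sum_{i=1}^{2n} P_i \;-\; m\,\infty_+ \;-\; m\,\infty_-
\end{equation*}
on $\Gamma$, for some admissible choice of signs.

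Next I would apply Abel's theorem: $D$ is principal if and only if its image in $\Jac(\Gamma)$ vanishes. In the standard basis $\omega_j=z^j\,dz/y$, $j=0,\dots,d-2$, of holomorphic differentials, this becomes a system of $d-1$ congruences modulo the period lattice of $\Gamma$. To rewrite the Abel sums $\sum_i \int^{P_i}\omega_j$ in terms of the input data $\pazocal{P}$ rather than the unknown roots $\zeta_i$, I use the identity $\log\pazocal{P}(z)=\sum_i\log(z-\zeta_i)$ combined with a contour deformation on $\Gamma$: since $\pazocal{P}>0$ on $E$, the function $\log\pazocal{P}$ is single-valued on $E$, and a loop around the branch cut picks up the Abel-Jacobi contributions $\int^{P_i}\omega_j$ from each zero $\zeta_i$, while the remaining boundary terms assemble into the real integrals appearing in the statement. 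The integrals $\int_{c_{2(d-k)+1}}^{c_{2(d-k)}} z^j/\sqrt{|T|}\,dz$ are halves of the $a$-periods of $\Gamma$, and $\int_{c_1}^{\infty} z^j/\sqrt{|T|}\,dz$ represents the Abel-Jacobi image of $\infty_\pm$; comparing coefficients identifies each $N_k$ as the integer coefficient of the corresponding $a$-period, so the Abel condition is precisely integrality of $N_1,\dots,N_{d-1}$.

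The main technical obstacle is promoting this existence to an $(E,m)$-representation in the strict sense and giving $N_k$ its interpretation as a zero count. Once $D$ is principal, $\phi$ is the (essentially unique) meromorphic function on $\Gamma$ with divisor $D$, and $A_m=(\phi+\phi\circ\sigma)/2$, $y\,B_{m-d}=(\phi-\phi\circ\sigma)/2$ are forced. Both positivity of $N_k$ and its zero-count interpretation come from an argument-principle computation: evaluating $\phi$ on the real axis (where $y=i\sqrt{|T|}$ on the intervals of $E$ and $y=\sqrt{T}$ on the gaps) and tracking the winding of $\arg\phi$, one identifies $N_k$ with the number of real zeros of $A_m$ in the indicated interval. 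Reality and interlacing inside $E$ then follow from the identity $A_m^2 - T B_{m-d}^2 = \pazocal{P}>0$ on $E$: at a zero of $A_m$ in the interior of an interval of $E$ one has $-T B_{m-d}^2>0$, hence $B_{m-d}\neq 0$ there, and between two consecutive zeros of $A_m$ the sign balance with $\pazocal{P}>0$ forces exactly one intervening zero of $B_{m-d}$. Simplicity of all zeros and non-vanishing at the endpoints $c_j$ follow from the same positivity. The converse implication is then immediate: given an $(E,m)$-representation, the $N_k$'s are literally the zero counts of $A_m$, hence positive integers.
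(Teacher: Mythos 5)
First, note that the paper does not prove Theorem~\ref{th:kln}: it is quoted from Krein--Levin--Nudelman \cite{KLN1990} as a known result, so there is no internal proof to compare with and your proposal must stand on its own. Your overall strategy (pass to the hyperelliptic curve $y^2=T(z)$, encode the representation as the principal divisor of $\phi=A_m+yB_{m-d}$, apply Abel's theorem, and read off the $N_k$ by the argument principle) is indeed the standard route to results of this type. However, there is a concrete error at the first step. You assert that $\phi$ has a pole of order $m$ at each of $\infty_\pm$, so that its divisor is $\sum_i P_i-m\infty_+-m\infty_-$ with $\deg\sum_i P_i=2m=2n$. This forces $m=n$, which is false in general and false in every application in this paper: in \eqref{eq:genPellA} one has $\deg\pazocal P=2g_m$ with $g_m\le d-1$, while the representation is an $(E,m+g_m)$-representation. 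When $2n<2m$, the requirement $\deg\bigl(A_m^2-TB_{m-d}^2\bigr)=2n$ forces the leading coefficients of $A_m$ and of $yB_{m-d}$ to cancel on one sheet, so the orders of $\phi$ at the two points over infinity are $m$ and $2n-m$ (a zero of order $m-2n$ if $m>2n$), not $m$ and $m$. Your divisor then has degree $2(n-m)\neq0$ and cannot be principal, so the Abel step collapses as written. This is not cosmetic: the asymmetry between $\infty_+$ and $\infty_-$ is precisely what produces the term $(-1)^d m\int_{c_1}^{\infty}z^j/\sqrt{|T(z)|}\,dz$ in the stated relations, and a symmetric divisor would not generate it.

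Even with the divisor corrected, the passage from Abel's theorem to the displayed relations is only gestured at. The left-hand side involves $\ln\pazocal P$ on $E$, and the link to $\phi$ is the identity $|\phi|^2=A_m^2+|T|\,B_{m-d}^2=\pazocal P$ on $E$ (where $y$ is purely imaginary); you never state it, and without it the ``contour deformation'' converting $\int_E z^j\ln\pazocal P/\sqrt{T}\,dz$ into Abel sums $\sum_i\int^{P_i}\omega_j$ plus period contributions is unjustified --- the actual mechanism is the reciprocity (Riemann bilinear relation) between the third-kind differential $d\log\phi$, whose residues at the points over infinity carry the integer $m$, and the holomorphic differentials $\omega_j$ on the cut curve. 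Finally, the equivalence requires showing that \emph{all} zeros of $A_m$ and $B_{m-d}$ are real, simple, and lie in $E$ with the stated interlacing; your positivity argument only treats zeros already assumed to lie in $E$ and does not exclude zeros in the gap intervals, outside $[c_{2d},c_1]$, or off the real axis. So the proposal has the right skeleton, but as it stands it contains a wrong divisor computation and leaves the analytically substantive steps unproved.
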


Now, we are going to review some facts from the theory of extremal polynomials on a system of several intervals of real axis. Such polynomials are also called generalized  Chebyshev polynomials, since in the case of one interval, they coincide with the classical Chebyshev polynomials.

For $n=0$, the polynomial $\pazocal{P}(z)$ is constant, so we can assume $\pazocal{P}(z) \equiv 1$, and get the Pell equation:

\begin{equation}
1=A_m^2(z)- T(z)B_{m-d}^2(z).
\end{equation}

The polynomials $A_m$ are, in this case, up to a scalar factor, the extremal polynomials on the system of $d$ intervals $E$ with respect to the uniform norm. They were introduced and studied initially by Chebyshev and his school (see \cite{AhiezerAPPROX}). The Pell equation indicates that the system $E$ is the maximal one with $A_m$ as its extremal polynomial of degree $m$. We are going to study the structure of extremal points of $A_m$, in particular the set of points of alternance.

{\color{black} Still assuming $n=0$ and $\pazocal{P}(z) \equiv 1$, we} notice that the roots of $T$ are simple solutions of the equation $A_m^2(z)=1$, while the roots of
$B_{m-d}$ are double solutions of the equation $A_m^2(z)=1$.
{\color{black} Due to the assumptions on} the degrees of the polynomials, these are all points where $|A_m(z)|$ is equal to $1$.

A set of \emph{points of alternance} is, by definition, a subset of the solutions of the equation $|A_m(z)|=1$, with the maximal number of elements, such that the signs of $A_m$ alter on it.
Such a set is not uniquely determined, however the number of its elements {\color{black} is} fixed and equal to $ m+1$, \cite{SodinYu1992, Si2015}.

\subsection{The Extremal Problem}\label{sec:EP}

One of the extremal problems considered in {\color{black}\cite[Section 5.5.5]{KLN1990}} is as follows:
\begin{quotation}
Let $\pazocal P(z)$ be a real polynomial of degree $2n$ which is positive on $E$ and $m$ a given natural number, $m\ge n$. Consider the rational functions of the form
\begin{equation}
\frac {X(z)}{H(z)\pazocal P(z)},
\end{equation}
where $X(z)$ and $H(z)$ are real monic polynomials of degrees {\color{black} at most} $m+d-1$ and $d-1$ respectively. 
We assume that all zeros of $H(x)$ belong to the {\color{black}open} gap-intervals $(c_{2j-1}, c_{2j-2})$ and each of those intervals contains at most one zero of $H(z)$.
Such rational functions are called {\it $mPE$ rationals}. Find among $mPE$ rationals the one {\color{black} with the least deviation} from zero on $E$ with respect to the uniform
norm.	
\end{quotation}
 
The solution  of the Extremal Problem from {\color{black}\cite[Section 5.5.5]{KLN1990}} establishes first that a generalized Pell's equation

\begin{equation}
{\color{black}
S_m(z)^2\pazocal P(z)=A_{m+g_m}^2(z)-\prod_{j=1}^{2d}(z-c_j)B_{m+g_m-d}^2(z),}
\end{equation}
has always a solution without any restrictions on the set $E$. Here, $S_m(z)$ is a real {\color{black} monic} polynomial of degree $g_m\le d-1$ with all zeros in the {\color{black}open} gap-intervals $(c_{2j-1}, c_{2j-2})$ with at most one zero in each gap-interval, and which generates an $(E, m+g_m)$
representation {\color{black} of the polynomial $S_m(z)^2\pazocal P(z)$}. {\color{black} The polynomials $A_{m+g_m}(z)$ and $B_{m+g_m-d}(z)$ are of the degrees $m+g_m$ and $m+g_m-d$ respectively.}
Then they conclude that the extremal $mPE$ rational is
$$
\chi_0(z)=\frac{{\color{black} A_{m+g_m}(z)}}{A_0S_m(z)},
$$
where $A_0$ is the leading coefficient of ${\color{black}A_{m+g_m}(z)}$.

We are going to consider the following restricted version of the above extremal problem:

{\bf The Restricted Extremal Problem.}  Let $\pazocal P(z)$ be a polynomial of degree $2n$ which is positive on $E$, and $m$, $q$ integers such that
$m\ge n$, {\color{black} $m\ge d$},  and $0\le q\le d-1$.
Under which conditions on $E$ the denominator of the extremal  $mPE$ rational has the degree $g_m$ which is not greater than $q$?  We will call such extremal  $mPE$ rationals {\it $q$-restricted} if the degree of their denominators $g_m$ do not exceed $q$.

{\color{black} In the further discussion, we will assume that $n=1$ and $\pazocal P\equiv 1$.}

Suppose two {\color{black}unions of closed} intervals are given:
$$
\I_d=\bigcup_{j=0}^{d-1}[c_{2(d-j)}, c_{2(d-j)-1}]
\quad\text{and}\quad
\I_d^*=\bigcup_{j=0}^{d-1}[c_{2(d-j)}^*, c_{2(d-j)-1}^*]
$$
which both admit extremal $g_m$-restricted $m$-rationals which are generated through the solutions of the generalized Pell equations:
\begin{equation}\label{eq:genPellA}
{\color{black}
S_m(z)^2=A_{m+g_m}^2(z)-\prod_{j=1}^{2d}(z-c_j)B_{m+g_m-d}^2(z),}
\end{equation}
and
{\color{black}
$$
\tilde S_m(z)^2=\tilde A_{m+g_m}^2(z)-\prod_{j=1}^{2d}(z-c_j^*)\tilde B_{m+g_m-d}^2(z),
$$
where  $\deg S_m=\deg \tilde S_m=g_m\le q$, {\color{black}while} $(A_{m+g_m}, B_{m+g_m-d})$ and $(\tilde A_{m+g_m}, \tilde B_{m+g_m-d})$
form $(\I_d, m+g_m)$ and $(\I_d^*, m+g_m)$  representations} respectively.

\begin{theorem}\label{th:pell-unique}
{\color{black}
Let $g_m$ be an integer, $0\le g_m\le d-1$. Suppose that:
\begin{itemize} \item [\textrm{(i)}] at least $g_m+1$ of the intervals from {\color{black}the union} $\I_d$ coincide with corresponding intervals from $\I_d^*$;
\item[\textrm{(ii)}] for each $j\in \{0, \dots, d-1\}$, $c_{2(d-j)}= c_{2(d-j)}^*$ or $c_{2(d-j)-1}= c_{2(d-j)-1}^*$;
\item[\textrm{(iii)}] in each pair of the corresponding {\color{black}closed intervals}
$$
[c_{2(d-j)},c_{2(d-j)-1}]
\quad\text{and}\quad
[c_{2(d-j)}^*,c_{2(d-j)-1}^*]
$$
the rational functions  $A_{m+g_m}/S_m$, $\tilde A_{m+g_m}/\tilde S_m$  have the same number of extreme points;
\item[\textrm{(iv)}] the functions $A_{m+g_m}/S_m$, $\tilde A_{m+g_m}/\tilde S_m$ have the same uniform norms on $\I_d$ and $\I_d^*$ respectively;
{\color{blue}
\item[\textrm{(v)}] in each pair of the corresponding {\color{black}open} gap intervals
$$
(c_{2(d-j)-1},c_{2(d-j)-2})
\quad\text{and}\quad
(c_{2(d-j)-1}^*,c_{2(d-j)-2}^*)
$$
}
{\color{black}either both polynomials $S_m$ and $\tilde S_m$ have a zero or none of them has a zero.
}
\end{itemize}
Then $\I_d=\I_d^*$ and $A_{m+g_m}/S_m=\pm\tilde A_{m+g_m}/\tilde S_m$.
}
\end{theorem}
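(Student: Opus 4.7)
The plan is to compare the two extremal rational functions $R := A_{m+g_m}/S_m$ and $\tilde R := \tilde A_{m+g_m}/\tilde S_m$ by packaging them into the two polynomials
\[
F(z) := A_{m+g_m}(z)\tilde S_m(z) - \tilde A_{m+g_m}(z) S_m(z), \qquad G(z) := A_{m+g_m}(z)\tilde S_m(z) + \tilde A_{m+g_m}(z) S_m(z),
\]
each of degree at most $m + 2g_m$, and then force one of them to vanish identically by a zero count. Evaluating \eqref{eq:genPellA} at any root $c_j$ of $T(z) = \prod_{j=1}^{2d}(z-c_j)$ gives $A_{m+g_m}^2(c_j) = S_m^2(c_j)$, so $|R| = 1$ at every endpoint of $\I_d$; together with $T \le 0$ on $\I_d$ this yields uniform norm of $R$ on $\I_d$ equal to $1$, and analogously for $\tilde R$ on $\I_d^*$, so the common norm from condition (iv) is $1$. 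By Lemma \ref{lemma:alternance} each of $R, \tilde R$ has $m + g_m + 1$ alternance points, distributed identically across corresponding intervals by (iii).

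Next, I count shared endpoints. Condition (i) yields $2(g_m+1)$ matching endpoints from the fully coinciding intervals, and (ii) supplies at least one further match in each of the remaining $d - g_m - 1$ intervals, for a total of at least $d + g_m + 1$ shared points $c_j = c_j^*$. At each of them both $R(c_j)$ and $\tilde R(c_j)$ lie in $\{\pm 1\}$: agreement of signs places the point in the zero set of $F$, disagreement in that of $G$. The assignment is globally coherent: alternance forces the sign of $R$ at the two endpoints of any single interval to be opposite, while sign changes of $R$ across a gap are controlled by the presence or absence of a zero of $S_m$ in that gap, and condition (v) guarantees the analogous gap pattern for $\tilde R$ matches that of $R$. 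Hence all shared endpoints are zeros of a single polynomial, call it $\Phi \in \{F, G\}$.

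To augment the count with interior zeros I exploit the coinciding intervals. Inside each of the $g_m+1$ such intervals the two equi-oscillating rationals share endpoint signs, uniform norm $1$, and the same number of alternance points; a Chebyshev-type interleaving argument then supplies additional zeros of $R - \tilde R$ or $R + \tilde R$ --- and therefore of $\Phi$, by the sign-coherence established above --- between consecutive alternance points of $R$. Combining these contributions with the shared-endpoint count, the total zero count of $\Phi$ strictly exceeds $m + 2g_m = \deg\Phi$, forcing $\Phi \equiv 0$ and hence $R = \pm \tilde R$.

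Finally, coprimality of $A_{m+g_m}$ and $S_m$ (the former has all zeros in the interior of the intervals of $\I_d$, the latter in the open gap intervals) promotes $R = \pm \tilde R$ to $S_m = \tilde S_m$ and $A_{m+g_m} = \pm \tilde A_{m+g_m}$; substituting back into the two Pell equations yields $T B_{m+g_m-d}^2 = T^* \tilde B_{m+g_m-d}^2$, and comparing simple roots on both sides gives $T = T^*$, i.e., $\I_d = \I_d^*$. The principal obstacle is the third step: interior alternance points of $R$ and $\tilde R$ need not coincide pointwise even when their counts per interval agree, so the interior zeros of $\Phi$ must be extracted by a sign-change argument rather than by pairing off alternance points, and this is precisely where the Audin-type alternative formalized in Lemma \ref{lemma:same-type-caustics} must be brought to bear.
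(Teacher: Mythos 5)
Your global strategy coincides with the paper's: bound from below the number of zeros of the numerator $A_{m+g_m}\tilde S_m\mp\tilde A_{m+g_m}S_m$, a polynomial of degree at most $m+2g_m$, and force it to vanish identically; your endgame (coprimality of $A_{m+g_m}$ and $S_m$ yields $S_m=\tilde S_m$ and $A_{m+g_m}=\pm\tilde A_{m+g_m}$, and then comparing $\prod(z-c_j)B_{m+g_m-d}^2$ with $\prod(z-c_j^*)\tilde B_{m+g_m-d}^2$ yields $\I_d=\I_d^*$) is sound. The genuine gap is that your zero count does not close. Write $L_j$ for the number of extremal points of $A_{m+g_m}/S_m$ on the $j$-th band, so that $\sum_{j=1}^d L_j=m+g_m+d$ (the $m+g_m-d$ zeros of $B_{m+g_m-d}$ plus the $2d$ endpoints). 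The paper reaches the target $m+2g_m+1$ by extracting at least $L_j$ zeros of the difference from every coinciding band and at least $L_j-1$ from every non-coinciding band, via the equioscillation sign-change count of Lemma 2.11 in \cite{PS1999} applied to \emph{all} $d$ bands. You harvest interior zeros only from the $g_m+1$ coinciding bands and record a single shared endpoint from each of the other $d-g_m-1$ bands. Since the interior zeros obtainable in a coinciding band number at most $L_j-2$ beyond its two endpoints, your total is at most $(d+g_m+1)+\sum_{j\in C}(L_j-2)$, with $C$ the set of coinciding bands; this falls short of $m+2g_m+1$ by exactly the number of zeros of $B_{m+g_m-d}$ lying in the non-coinciding bands, which is positive in general (already for $d=2$, $g_m=0$ and $m$ large, with most interior extremal points in the non-coinciding band). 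Hypotheses (ii) and (iii) exist precisely so that the interleaving argument can also be run on the corresponding pairs of bands that share only one endpoint; omitting those bands loses the bulk of the zeros.

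Two further remarks. Your claim that alternance forces opposite signs of $R$ at the two endpoints of a band is false when the band contains an odd number of zeros of $B_{m+g_m-d}$; the sign bookkeeping still works, but only because (iii) and (v) force the two functions' sign patterns to match band by band and gap by gap, which is the argument the paper actually uses (fixing the sign of $\tilde A_{m+g_m}/\tilde S_m$ on one coinciding band and propagating it). Finally, Lemma \ref{lemma:same-type-caustics} (the Audin alternative) is a statement about confocal quadrics and has no bearing on this purely approximation-theoretic theorem; the tool needed at your ``principal obstacle'' is again the Peherstorfer--Schiefermayr sign-change count from \cite{PS1999}, not the Audin alternative.
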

\begin{proof}
{\color{black}
Denote $\mathcal {A}_{m}=A_{m+g_m}/S_m$, $\tilde {\mathcal A_m}=\tilde A_{m+g_m}/\tilde S_m$, $I_{j}=[c_{2(d-j)},c_{2(d-j)-1}]$, and $I_{j}^*=[c_{2(d-j)}^*,c_{2(d-j)-1}^*]$.

Let $L_{j}$ be the number of points of alternance
of  $\mathcal A_m$ on $I_j$.
Whenever $I_j=I_j^*$, the assumption (iii) implies that  $\tilde {\mathcal A_m}$ will also have $L_{\alpha_j}$ points of alternance on $I^*_{j}$.

Denote by $Z(I)$ the number of zeros of zeros of the difference $\mathcal A_m - \tilde {\mathcal A_m}$ on a closed interval $I$.
We will estimate $Z(I)$ for various closed intervals $I$.
One can easily see that:
\begin{gather*}
Z(I_j)\ge L_j \quad\text{whenever}\quad I_j=I_j^*,
\\ 
Z(I_j)\ge L_j-1 \quad\text{whenever}\quad I_j\neq I_j^*.
\end{gather*}
The proofs of those two inequalities follow along the lines of the proof of} Lemma 2.11 from \cite{PS1999}. 
Namely, for the first inequality one needs
 to select a proper sign of $\tilde {\mathcal A_m}$ on the first coinciding interval. 
 From the assumption (v) then it follows that the signs of $\tilde {\mathcal A_m}$ at all other coinciding intervals are such to guarantee the fulfilment of the first inequality.

Observe that
$$
\sum_{j=1}^d L_j=m+g_m+d,
$$
 since the extremal points are the $m+g_m-d$ zeros of the polynomial $B_{m+g_m-d}$ and the $2d$ endpoints of the segments.
On the other hand, from the previous inequalities we get
$$
Z([c_{2d}, c_1])\ge \sum_{j=1}^d L_j-(d-g_m-1)=m+2g_m+1.
$$
Since the degree of the numerator of the difference $\mathcal A_m - \tilde {\mathcal {A}_m}$ is at most $m+2g_m$
we conclude that $\mathcal A_m =\tilde {\mathcal A_m}$ and $\pazocal{I}_d=\pazocal{I}_d^*$.
\end{proof}

\begin{remark} The case $g_m=d-1$ was considered in
	{\color{black}\cite[Section 5.5.5]{KLN1990}}. Another extreme case is $g_m=0$. It  was considered in
{\color{black}\cite[Theorem 2.12]{PS1999}}.
\end{remark}
In the following two propositions, we are going to generalize some other results from \cite{PS1999} to the case of rational functions.
Assume the equation \eqref{eq:genPellA} is satisfied.

\begin{proposition} Let $\pazocal P\equiv 1$ be the constant polynomial and $\chi_0$ be the $mPE$ extremal rational function with the degree of the denominator equal $g_m$ and the norm $L$. Denote by $x_1^+,\dots, x_{m+g_m}^+$ solutions of the equation $\chi_0(z)=L$ and by $x_1^-,\dots, x_{m+g_m}^-$ solutions of the equation $\chi_0(z)=-L$. Then
\begin{equation}\label{eq:cond1}
\sum_{j=1}^{m+g_m}(x_j^+)^k-\sum_{j=1}^{m+g_m}(x_j^-)^k=0, \quad k\in\{0, 1,\dots, m-1\}.
\end{equation}
\end{proposition}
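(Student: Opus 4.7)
The plan is to convert the level-set equations $\chi_0=\pm L$ into polynomial identities of the same degree whose difference has small degree, and then extract the claim via Vieta together with Newton's identities.

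First I would pin down the norm. With $\pazocal{P}\equiv 1$, the generalized Pell equation \eqref{eq:genPellA} rearranges to
$$\chi_0(z)^2-\frac{1}{A_0^2}=\frac{T(z)\,B_{m+g_m-d}(z)^2}{A_0^2\,S_m(z)^2},\qquad T(z)=\prod_{j=1}^{2d}(z-c_j).$$
A quick sign count on each closed interval $[c_{2k},c_{2k-1}]$ shows that exactly $2k-1$ of the factors of $T(z)$ are non-positive there, so $T\le 0$ on $E$. Hence $|\chi_0|\le 1/|A_0|$ on $E$, with equality at the $2d$ endpoints $c_j$ and at the zeros of $B_{m+g_m-d}$; consequently $L=1/|A_0|$, and $\varepsilon:=L A_0\in\{-1,+1\}$.

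Clearing denominators, the points $x_j^\pm$ are precisely the roots of the polynomials
$$P_\pm(z)\;:=\;A_{m+g_m}(z)\;\mp\;\varepsilon\,S_m(z),$$
both of degree $m+g_m$ with leading coefficient $A_0$, whose difference
$$P_+(z)-P_-(z)=-2\varepsilon\,S_m(z)$$
has degree $g_m$. Consequently, the coefficients of $z^{m+g_m-i}$ in $P_+$ and $P_-$ coincide for every $i=0,1,\dots,m-1$.

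Writing $P_\pm(z)=A_0\prod_{j=1}^{m+g_m}(z-x_j^\pm)$, Vieta's formulas turn the coefficient agreement into the equality of elementary symmetric polynomials $e_i(x_1^\pm,\dots,x_{m+g_m}^\pm)$ for $i=0,\dots,m-1$. Newton's identities then propagate this into the identity of power sums $\sum_j(x_j^+)^k=\sum_j(x_j^-)^k$ for $k=1,\dots,m-1$; the case $k=0$ is trivial, since both sides equal $m+g_m$. This is precisely \eqref{eq:cond1}. There is no real obstacle here: the whole argument turns on the degree gap $\deg P_\pm-\deg S_m=m$, which is exactly what limits the admissible range of $k$.
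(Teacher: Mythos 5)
Your proof is correct, and its decisive step is the same as the paper's: the two level sets $\chi_0=\pm L$ are cut out by polynomials of degree $m+g_m$ whose difference is a constant multiple of the denominator $S_m$, hence has degree only $g_m$, leaving a gap of $m$ in the top coefficients. The paper extracts \eqref{eq:cond1} from this by writing $\prod_j(z-x_j^+)/\prod_j(z-x_j^-)=1+\mathcal{O}(z^{-m})$ and taking the logarithmic derivative, which produces the power sums directly as coefficients of the resulting Laurent expansion; you instead read off equality of the elementary symmetric functions $e_0,\dots,e_{m-1}$ via Vieta and then pass to power sums by Newton's identities. These two finishing moves are equivalent in substance (the paper itself treats the power-sum form \eqref{eq:cond1} and the symmetric-function form \eqref{eq:cond2} as interchangeable at the start of the proof of Proposition \ref{prop:x+-}), so the difference is cosmetic rather than structural. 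One thing you supply that the paper leaves implicit is the identification $L=1/|A_0|$ from the sign of $T$ on $E$ and the resulting explicit form $P_\pm=A_{m+g_m}\mp\varepsilon S_m$ of the level-set polynomials; this makes the normalization of the leading coefficients, which the paper silently assumes when writing $W^\pm$ as a monic numerator over the denominator, completely transparent.
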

\begin{proof} It follows the proof of Lemma 2.1 from \cite{PS1999}. Denote $W^+=\chi_0-L$ and $W^-=\chi_0+L$. Then
$$
W^{\pm}=\frac{\prod_{j=1}^{m+g_m}(z-x_j^{\pm})}{\prod_{l=1}^q(z-w_l)}.
$$
 Since the difference $W^+-W^-$ is constant, we have
$$
\frac{\prod_{j=1}^{m+g_m}(z-x_j^{+})}{\prod_{j=1}^{m+g_m}(z-x_j^{-})}=1 + \mathcal{O}\left(\frac{1}{z^m}\right)
$$
  Taking the logarithmic derivative from both sides of the last equality, we get
$$
\frac{(W^+)'}{W^+}-\frac{(W^-)'}{W^-}
=
\mathcal{O}\left(\frac{1}{z^{m+1}}\right)
$$
and
$${\color{black}
\sum_{j=1}^{m+g_m}\frac{1}{z-x_j^+}}
-
\sum_{j=1}^{m+g_m}\frac{1}{z-x_j^-}
=
\sum_{k=0}^\infty
\left[\sum_{j=1}^{m+g_m}(x_j^+)^k-\sum_{j=1}^{m+g_m}(x_j^-)^k\right]
\frac{1}{z^{k+1}
}=
\mathcal{O}\left(\frac{1}{z^{m+1}}\right).
$$
\end{proof}
We will denote by $\sigma_n^k$  symmetric functions of $n$ variables of order $k$ {\color{black} defined as follows:
$$
\sigma_n^k(a_1,\dots,a_n)=\sum_{i_1<\dots<i_k}a_{i_1}\cdots a_{i_k}.
$$
}
.
\begin{proposition}\label{prop:x+-} 
Suppose that given points $x_1^+,\dots, x_{m+g_m}^+$  and $x_1^-,\dots, x_{m+g_m}^-$ satisfy:
\begin{equation}\label{eq:cond2}
\sigma_{m+g_m}^k(x_i^+)= \sigma_{m+g_m}^k(x_i^-),
\quad k\in\{0, 1, \dots, m-1\}.
\end{equation}
Then:
$$
L=\frac{(-1)^{m}}{2}[{\color{black}\sigma_{m+g_m}^m(x_i^+)- \sigma_{m+g_m}^m(x_i^-)}],
$$
$$
h_i=\frac{(-1)^{{\color{black}i+m}}}{2L}[\sigma_{m+g_m}^{i+m}(x_i^+)- \sigma_{m+g_m}^{i+m}(x_i^-)], \, i=1, \dots, g_m,
$$
define the denominator $H_{g_m}(z)=h_{g_m}+h_{{g_m}-1}z+\dots h_1z^{{g_m}-1}+z^{g_m}$ of the extremal rational function
$$
\frac{\prod_{j=1}^{m+{g_m}}(z-x_j^+)}{H_{g_m}(z)}-L=\frac{\prod_{j=1}^{m+{g_m}}(z-x_j^-)}{H_{g_m}(z)}+L=\chi_0.
$$
\end{proposition}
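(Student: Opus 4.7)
The plan is to translate both the hypothesis and the desired conclusion into statements about the two monic polynomials
$$
P^+(z):=\prod_{j=1}^{m+g_m}(z-x_j^+),\qquad P^-(z):=\prod_{j=1}^{m+g_m}(z-x_j^-),
$$
whose coefficients are, up to signs, precisely the elementary symmetric functions appearing in \eqref{eq:cond2}. The whole proposition reduces to the single polynomial identity
$$
P^+(z)-P^-(z)=2L\cdot H_{g_m}(z),
$$
because dividing by $H_{g_m}(z)$ gives immediately $P^+(z)/H_{g_m}(z)-L=P^-(z)/H_{g_m}(z)+L$, and the preceding proposition has already identified this common value with the extremal rational function $\chi_0$.

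To establish that identity, I would first expand by Vieta's formulas,
$$
P^{\pm}(z)=\sum_{k=0}^{m+g_m}(-1)^k\sigma_{m+g_m}^k(x_i^{\pm})\,z^{m+g_m-k},
$$
and then subtract. Under the hypothesis \eqref{eq:cond2}, every coefficient coming from $k\in\{0,1,\dots,m-1\}$ cancels, so the terms $z^{m+g_m},z^{m+g_m-1},\dots,z^{g_m+1}$ disappear and $\deg(P^+-P^-)\le g_m$. Writing out what remains,
$$
P^+(z)-P^-(z)=\sum_{i=0}^{g_m}(-1)^{m+i}\bigl[\sigma_{m+g_m}^{m+i}(x_j^+)-\sigma_{m+g_m}^{m+i}(x_j^-)\bigr]\,z^{g_m-i}.
$$

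The final step is purely a matching of coefficients. Requiring the right-hand side to be $2L\,H_{g_m}(z)=2L(z^{g_m}+h_1z^{g_m-1}+\cdots+h_{g_m})$ and reading off the leading term ($i=0$) forces the stated formula for $L$, while comparing the coefficient of $z^{g_m-i}$ for $1\le i\le g_m$ yields exactly the given expression for $h_i$. Since $H_{g_m}$ is declared monic, the definition of $L$ is the one that makes this consistent, and the remaining $h_i$ are then determined. This mirrors the argument of Lemma 2.2 in \cite{PS1999} in the polynomial case, with the elementary symmetric identities playing the same role here.

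The only real subtlety is ensuring $L\neq 0$, so that the definitions of the $h_i$ are meaningful and $H_{g_m}$ genuinely has degree $g_m$; this is not a combinatorial issue but reflects the fact that the sets $\{x_j^\pm\}$ come from an actual extremal problem whose extremal value is nonzero on $E$. Apart from this nondegeneracy, the proof is just the Vieta bookkeeping sketched above.
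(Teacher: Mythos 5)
Your proof is correct and follows essentially the same route as the paper: the paper's own argument consists of observing the polynomial identity $2LH_{g_m}(z)=\prod_j(z-x_j^+)-\prod_j(z-x_j^-)$ and "comparing coefficients," which is exactly the Vieta bookkeeping you carry out explicitly. Your version is in fact slightly more complete, since you write out the cancellation of the top $m$ coefficients and flag the nondegeneracy $L\neq 0$, both of which the paper leaves implicit.
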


\begin{proof}  Let us first observe that the conditions \eqref{eq:cond1} and \eqref{eq:cond2} are equivalent.
Next, we observe the polynomial identity:
$$
2LH_{g_m}(z)=\prod_{j=1}^{m+{g_m}}(z-x_j^+)-\prod_{j=1}^{m+{g_m}}(z-x_j^-).
$$
The proof now follows from the comparison of the coefficients of the same degree on the left and the right hand side of the polynomial
identity.
\end{proof}

{\color{black}\begin{remark} How can the union $E$ of the closed segments be reconstructed from points $x_j^+$, $x_j^-$ that satisfy the conditions of Proposition \ref{prop:x+-}? 
Note that those points are the solutions of the equations $\chi_0(z)=L$ and $\chi_0(z)=-L$, and they are listed with the corresponding multiplicites.
The points with are listed only once will be the endpoints of the closed intervals which constitute $E$.
\end{remark}
}

\begin{lemma}\label{lemma:alternance} The number of points of an alternance set for the extremal rational function $\chi_0$ is $m+2g_m+1$.
\end{lemma}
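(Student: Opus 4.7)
The plan is to count the points in $E$ where $|\chi_0|$ attains its maximum and then track the sign pattern to identify which ones actually belong to an alternance set.

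First I would locate the candidate extremal points. Specializing the generalized Pell equation \eqref{eq:genPellA} to $\pazocal P\equiv 1$, so that
$$
A_{m+g_m}^2(z)-T(z)B_{m+g_m-d}^2(z)=S_m^2(z),
$$
and using that $T(z)\le 0$ on $E$ while $S_m$ does not vanish on $E$ (its zeros lie in the open gaps), one sees that $|\chi_0(z)|\le L$ on $E$ with equality precisely when $T(z)B_{m+g_m-d}^2(z)=0$. That is, the set where $|\chi_0|=L$ consists of the $2d$ endpoints $c_1,\dots,c_{2d}$ together with the $m+g_m-d$ zeros of $B_{m+g_m-d}$, giving $m+g_m+d$ candidate points in total.

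Next I would analyze the sign pattern of $\chi_0$ inside a single closed interval $I_j=[c_{2j},c_{2j-1}]$. If $A_{m+g_m}$ has $b_j+1$ zeros in the interior of $I_j$ and $B_{m+g_m-d}$ has $b_j$ zeros there, the $(E,m+g_m)$-representation forces these to be simple and interlaced. Since $S_m$ has constant sign on $I_j$, the value $\chi_0(z)=\pm L$ at each of the $b_j+2$ extremal points in $I_j$ (the two endpoints plus the $b_j$ zeros of $B_{m+g_m-d}$) changes sign every time we cross a zero of $A_{m+g_m}$. Thus all $b_j+2$ points of $I_j$ contribute to alternance, and the signs at the two endpoints are opposite iff $b_j+1$ is odd.

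I would then examine transitions through a gap $(c_{2j-1},c_{2j-2})$: since $A_{m+g_m}$ has no zeros in the closure of the gap and $S_m$ is the only source of sign change in the gap (with at most one simple zero per gap), the sign of $\chi_0$ at $c_{2j-2}$ equals the opposite of its sign at $c_{2j-1}$ precisely when $S_m$ has a zero in that gap; otherwise the sign is preserved and one point must be discarded from the alternance count at that junction. Summing up: each of the $d$ intervals contributes $b_j+2$ candidates, and each of the $d-1-g_m$ gaps devoid of a zero of $S_m$ forces exactly one deletion. Using $\sum b_j=m+g_m-d$, the total number of alternance points is
$$
\sum_{j=1}^{d}(b_j+2)-(d-1-g_m)=(m+g_m-d)+2d-(d-1-g_m)=m+2g_m+1,
$$
which is the claim. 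The main technical step is the sign-tracking argument that justifies the deletion of exactly one candidate per empty gap; everything else is bookkeeping based on the alternation property built into the $(E,m+g_m)$-representation.
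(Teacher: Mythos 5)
Your proof is correct and follows essentially the same route as the paper's: both identify the extremal set as the $m+g_m-d$ zeros of $B_{m+g_m-d}$ together with the interval endpoints, and both rest on the observation that the sign of $\chi_0$ flips across a gap exactly when $S_m$ has a (unique, simple) zero there; you merely organize the count subtractively (all $m+g_m+d$ candidates minus one deletion per gap free of zeros of $S_m$) where the paper counts additively ($d+1$ endpoints as in the polynomial case plus $g_m$ extra ones). Your version spells out the within-interval alternation via the interlacing of zeros of $A_{m+g_m}$ and $B_{m+g_m-d}$ and derives the candidate set directly from the Pell identity, details the paper leaves implicit, but the underlying argument is the same.
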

\begin{proof} {\color{black} The points of alternanace are the $m+g_m-d$ zeros of $B_{m+g_m-d}$  and $d+1$ endpoints of the intervals as in the polynomial case, where only one endpoint is selected in each of the $d-1$ gap-intervals $(c_{2j-1}, c_{2j-2})$, $j\in\{2, \dots, d\}$. In addition, in the rational case there are also $g_m$ more endpoints of those gap-intervals  $(c_{2j-1}, c_{2j-2})$, where the denominator has zeros, which belong to the alternance set.
These additional $g_m$ points of alternance appear with  each gap interval $(c_{2j-1}, c_{2j-2})$
where the denominator has a zero.  Since the zero is unique in the interval and simple, the values of the function at the endpoints are of the opposite signs. In total, this gives $m+g_m-d+d+1+g_m=m+2g_m+1$ points of alternance.}
\end{proof}

\section{Resonant billiard trajectories within ellipsoid}\label{sec:rresonant}
\label{sec:resonant}

In this section, we will first review the winding numbers associated to periodic trajectories of billiards within ellipsoids in an arbitrary-dimensional Euclidean space.
Then we generalise the frequency map to all trajectories and define resonance for non-periodic ones, and, finally, employ the restricted extremal problem from Section \ref{sec:EPEm} to discuss the caustics of resonant billiard trajectories.


\subsection{Billiards within ellipsoids. Associated winding numbers.}\label{sec:winding}

A variety of higher-dimensional analogues of Poncelet polygons were introduced in \cites{DarbouxSUR, CCS1993, DragRadn2006jmpa, DragRadn2008}, see also \cite{DragRadn2011book} for a systematic exposition and bibliography therein.
Corresponding Cayley-type conditions for periodicity were derived by the present authors in \cites{DragRadn1998a, DragRadn1998b, DragRadn2004, DragRadn2008}, see also \cite{DragRadn2011book}.

Let an ellipsoid in the $d$-dimensional Euclidean space $\mathbf{E}^d$ be given by:
\begin{equation}\label{eq:ellipsoidd}
\E\ :\ \frac{x_1^2}{a_1}+\dots+\frac{x_d^2}{a_d}=1,
\quad
a_1>a_2>\dots>a_d{\color{black}>0}.
\end{equation}
The family of quadrics confocal with $\E$ is:
\begin{equation}\label{eq:confocald}
\Q_{\lambda}\ :\ 
Q_{\lambda}({\color{black}{\bf x}})=\frac {x_1^2}{a_1-\lambda}+\dots + \frac
{x_d^2}{a_d-\lambda}=1,
\end{equation}
with ${\bf x}=(x_1,\dots,x_d)\in\mathbf{E}^d$.

We will say that confocal quadrics $\Q_{\lambda}, \Q_{\mu}$ are \emph{of the same type} if both $\lambda$ and $\mu$ belong to the same open interval among the following ones: $(-\infty,a_d)$, $(a_{j+1},a_j)$, with $j\in\{1,\dots,d-1\}$.
For a point given by its Cartesian coordinates ${\color{black}{\bf x}}=(x_1, \dots, x_d)$, the \emph{Jacobi elliptic coordinates} $(\lambda_1,\dots, \lambda_d)$ are defined as the solutions of the equation in $\lambda$: $ Q_{\lambda}({\bf x})=1$. 
The correspondence between the elliptic and Cartesian coordinates is not injective, since points symmetric with respect to coordinate hyper-planes have equal elliptic coordinates.
Note that the quadrics $\Q_{\lambda_1}, \Q_{\lambda_2}, \dots, \Q_{\lambda_d}$ which contain a given point ${\bf x}$ are orthogonal to each other at the point of intersection and they are all of mutually distinct types
{\color{black}\cite[Appendix 15]{ArnoldMMM}}.

The Chasles theorem  states that almost any line $\ell$ in the space $\mathbf E^d$ is tangent to exactly $d-1$ non-degenerate quadrics from the confocal family {\color{black} and the tangent hyper-planes at the points of tangency are orthogonal to each other {\color{black}\cite[Theorem 2, Appendix 15]{ArnoldMMM}}.
Moreover, if $\ell'$ obtained from $\ell$ by the billiard reflection off a quadric from the confocal family \eqref{eq:confocald}, then those two lines are touching the same $d-1$ confocal quadrics $\Q_{\gamma_1}$, \dots, $\Q_{\gamma_{d-1}}$. 
Moreover, the Jacobi elliptic coordinates $(\lambda_1,\dots, \lambda_d)$ of any point on $\ell$ satisfy the
inequalities $\Pol(\lambda_j)\ge 0$, $j=1,\dots,d$, where
\begin{equation}\label{eq:P}
\Pol(\lambda)=(a_1-\lambda)\dots(a_d-\lambda)(\gamma_1-\lambda)\dots(\gamma_{d-1}-\lambda).
\end{equation}

 \begin{remark}\label{re:Chasles}  Although the Chasles theorem indicates that the tangent hyper-planes to $\Q_{\gamma_1}$, \dots, $\Q_{\gamma_{d-1}}$ at the touching points with $\ell$ are orthogonal to each other, note that 
the points where the line $\ell$ is touching those quadrics are generically distinct.
Thus, those $d-1$ quadrics are not necessarily of different types. More precisely, the possible arrangements of the parameters $\gamma_1$, \dots, $\gamma_{d-1}$ within the intervals $(-\infty,a_d)$, $(a_{j+1},a_j)$,  $j\in\{1,\dots,d-1\}$, can be obtained from the following Lemma \ref{lemma:audin}.
\end{remark}

Let $b_1<b_2<\dots<b_{2d-1}$ be constants such that
$\{b_1,\dots,b_{2d-1}\}=\{a_1,\dots,a_d,\gamma_1,\dots,\gamma_{d-1}\}.$
Clearly, $b_{2d-1}=a_1$.

\begin{lemma}[\cite{Audin1994}]\label{lemma:audin}
If $\gamma_1<\gamma_2< \dots<\gamma_{d-1}$, then
$\gamma_j\in\{b_{2j-1},b_{2j}\}$, for $1\le j\le d-1$.

\end{lemma}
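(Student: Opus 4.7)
The plan is to deduce the interlacing from the sign pattern of the polynomial $\Pol(\lambda)$ defined in \eqref{eq:P}, using the fact that each Jacobi elliptic coordinate $\lambda_k$ of a point on the line $\ell$ satisfies $\Pol(\lambda_k)\ge 0$. First I would identify the positivity set of $\Pol$: since $\Pol$ has degree $2d-1$ with simple real zeros $b_1<\dots<b_{2d-1}$ and negative leading coefficient, $\{\Pol\ge 0\}$ splits into exactly $d$ closed components $P_0:=(-\infty,b_1]$ and $P_i:=[b_{2i},b_{2i+1}]$ for $i=1,\dots,d-1$. Because $b_{2d-1}=a_1$ and every $b_i$ is either some $a_j$ or some $\gamma_j$, no $P_i$ crosses any $a_j$ in its interior, so each $P_i$ is contained in the closure of a single Jacobi interval $\bar\Lambda_k$, where $\Lambda_1=(-\infty,a_d)$ and $\Lambda_k=(a_{d-k+2},a_{d-k+1})$ for $k\ge 2$.

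Second, since $\ell$ is a real line with the prescribed caustics, each coordinate $\lambda_k$ genuinely oscillates in $\bar\Lambda_k\cap\{\Pol\ge 0\}$, so every $\bar\Lambda_k$ must contain at least one component $P_i$. There are $d$ components and $d$ Jacobi intervals; pigeonhole together with the natural left-to-right ordering of both families forces the unique order-preserving assignment $P_{k-1}\subseteq\bar\Lambda_k$ for $k=1,\dots,d$. Reading off endpoints across consecutive $k$'s produces the chain of inequalities $b_{2k-1}\le a_{d-k+1}\le b_{2k}$ for $k=1,\dots,d-1$, together with $b_{2d-1}=a_1$.

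Finally, since each $a_{d-k+1}$ itself appears in the list $b_1,\dots,b_{2d-1}$ and the interval $[b_{2k-1},b_{2k}]$ contains no $b$-value other than its endpoints, I get $a_{d-k+1}\in\{b_{2k-1},b_{2k}\}$ for $k=1,\dots,d-1$. Thus each pair of positions $\{2k-1,2k\}$ carries exactly one of the semi-axes, plus $a_1$ in position $2d-1$, leaving exactly one vacant slot per pair. The $d-1$ ordered caustic parameters $\gamma_1<\dots<\gamma_{d-1}$ must fill these vacancies in increasing order, yielding $\gamma_j\in\{b_{2j-1},b_{2j}\}$. The main obstacle will be justifying the pigeonhole step: namely, verifying that the reality of $\ell$ actually forces non-degenerate oscillation of every $\lambda_k$, so that each $\bar\Lambda_k$ meets $\{\Pol\ge 0\}$ in a genuine interval rather than a single point. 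This is the one place where the geometric hypothesis (the line lives in real Euclidean space and is tangent to the $d-1$ quadrics $\Q_{\gamma_1},\dots,\Q_{\gamma_{d-1}}$) enters the otherwise purely polynomial argument.
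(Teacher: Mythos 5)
The paper offers no proof of this lemma at all --- it is imported verbatim from \cite{Audin1994} --- so there is nothing internal to compare against; what you have written is a self-contained derivation from exactly the two facts the paper does record, namely that $\Pol(\lambda_j)\ge 0$ at every point of $\ell$ and that the $d$ Jacobi coordinates of a point lie in the $d$ mutually distinct type-intervals. The architecture is sound: $\Pol=\prod_{i=1}^{2d-1}(b_i-\lambda)$ has odd degree and negative leading coefficient, so $\{\Pol\ge0\}$ is indeed $(-\infty,b_1]\cup[b_2,b_3]\cup\dots\cup[b_{2d-2},b_{2d-1}]$, a union of $d$ components whose interiors contain no $b_i$ and hence no $a_j$; each component therefore sits inside the closure of exactly one type-interval, and once you know the assignment is surjective, the ordering forces $[b_{2k-2},b_{2k-1}]\subseteq[a_{d-k+2},a_{d-k+1}]$, whence $b_{2k-1}\le a_{d-k+1}\le b_{2k}$ and $a_{d-k+1}\in\{b_{2k-1},b_{2k}\}$. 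The final bookkeeping --- one semi-axis per pair $\{b_{2j-1},b_{2j}\}$ plus $a_1=b_{2d-1}$, with the ordered $\gamma$'s filling the one vacant slot per pair in increasing order --- is correct.

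The one genuine gap is the one you flag yourself: surjectivity requires each closed type-interval to contain a \emph{whole} component of $\{\Pol\ge0\}$, and for that you need the range of $\lambda_k$ along $\ell$ to be a non-degenerate interval rather than a single boundary point $a_j$ shared with an adjacent type-interval. This does close, but you must say how. If $\lambda_k\equiv c$ along $\ell$ with $c\notin\{a_1,\dots,a_d\}$, then $Q_c(\mathbf{x})=1$ at every point of $\ell$, i.e.\ $\ell$ lies on the confocal quadric $\Q_c$; if $c=a_j$, the same computation forces $x_j\equiv 0$ on $\ell$, i.e.\ $\ell$ lies in a coordinate hyperplane. Both situations are excluded by the genericity under which the lemma is stated: a line lying on a ruled confocal quadric has that quadric as a repeated caustic (compare the discussion preceding Proposition \ref{prop:cayley-double-caustic} for $d=3$), contradicting the standing assumption that $b_1<\dots<b_{2d-1}$ are $2d-1$ pairwise distinct numbers, while a line inside a coordinate hyperplane is not a Chasles-generic line tangent to $d-1$ distinct non-degenerate quadrics. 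Note also that, by continuity of the ordered roots, non-constancy of $\lambda_k$ already gives a non-degenerate interval of values, which by connectedness must lie in the interior of a single component; that is all the ``oscillation'' you actually need. With that observation inserted, the argument is complete.
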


{\color{black}
\begin{example}\label{ex:typescaustics} In three-dimensional space $(d=3)$, according to Lemma \ref{lemma:audin},  there are following possibilities
for types of pairs of quadrics, which touch a generic line $\ell$:
{\color{black}
\begin{itemize}
	\item an ellipsoid and a $1$-sheeted hyperboloid;
	\item an ellipsoid and a $2$-sheeted hyperboloid;
	\item two $1$-sheeted hyperboloids;
	\item a $1$-sheeted and a $2$-sheeted hyperboloid.
\end{itemize}
}
\end{example}
}
If $\ell$ is the line containing a segment of a billiard trajectory within $\E$, then $b_1>0$.

Along a billiard trajectory, the Jacobi elliptic coordinates satisfy:
$$
b_0=0\le\lambda_1\le b_1,
\quad
b_2\le\lambda_2\le b_3,
\quad\dots,\quad
b_{2d-2}\le\lambda_{d}\le b_{2d-1}.
$$
Moreover, along a periodic trajectory, each Jacobi coordinate $\lambda_j$ fills the whole {\color{black} closed interval} {\color{black}$[b_{2j-2},b_{2j-1}]$, with local extreme points being only the endpoints of that} {\color{black}interval}.
Thus, $\lambda_j$ takes values $b_{2j-2}$ and $b_{2j-1}$ alternately and changes monotonically between them.
Let $\T$ be a periodic billiard trajectory and denote by $m_j$ the number of its points where $\lambda_j=b_{2j-2}$.
Based on the previous discussion, $m_j$ is also the number of the points where $\lambda_j=b_{2j-1}$.

Notice that the value $\lambda_1=0$ corresponds to an impact with the boundary ellipsoid $\E$,
value $\lambda_j=\gamma_{k}$ corresponds to a tangency with the caustic $\Q_{\gamma_k}$, and
$\lambda_j=a_k$ corresponds to an intersection with the coordinate hyperplane $x_k=0$.
Since each periodic trajectory intersects any hyperplane even number of times, we get that $m_j$ must be even whenever $b_{2j-2}$ or $b_{2j-1}$ is in the set $\{a_1,\dots, a_d\}$.

\begin{definition}\label{def:windingnumbers}
Following \cite{RR2014}, we denote $m_0=n$ and call
$(m_0, m_1, \dots, m_{d-1})$
\emph{the winding numbers} of a given periodic billiard trajectory with period $n$. In addition we introduce \emph{the elliptic period} $\tilde n$ as the number of impacts after which the trajectory closes in the Jacobi elliptic coordinates.
\end{definition}

\subsection{The frequency map and resonance}

In order to extend the considerations about the winding numbers to the cases of irrational frequencies and non-periodic trajectories, we  employ the potential theory and harmonic analysis, see {\color{black}\cite[Chapter 5]{Si2011}}.
Let us denote

\begin{gather*}
c_1=\frac{1}{b_1},\ \dots,\ c_{2d-1}=\frac{1}{b_{2d-1}},\ c_{2d}=0,
\\
\hat{\mathcal{P}}_{2d}(s)=\prod_{j=1}^{2d}(s-c_j).
\end{gather*}
Consider the curve:
\begin{equation}\label{eq:Curve}
\hat \Curve: t^2=\hat{\mathcal{P}}_{2d}(s).
\end{equation}

 We consider the differential of the third kind
 {\color{black}
 $$\eta=\frac{\eta_{d-1}(s)}{\sqrt{\hat{\mathcal{P}}_{2d}(s)}}\, ds, $$
 where $\eta_{d-1}$ is a  monic polynomial of degree $d-1$
 defined by the conditions:
\begin{equation}\label{eq:diff}
\int_{c_{2j+1}}^{c_{2j}}\frac{\eta_{d-1}(s)}{\sqrt{\hat{\mathcal{P}}_{2d}(s)}}\, ds =0,
\quad
j\in\{d-1, d-2, \dots, 1\},
\end{equation}
The residues of $\eta$ at $\pm\infty$ are $\pm 1$.
Here by $\pm\infty$ we assume the two
points over infinity of the curve \eqref{eq:Curve}. }
 Then the equilibrium measure $\mu$, defined by:
$$
\mu\big([c_{2k},c_{2k-1}]\big)
=
\frac{1}{\pi}
\int_{c_{2k}}^{c_{2k-1}}
\frac{|\eta_{d-1}(s)|}{\sqrt{|\hat{\mathcal{P}}_{2d}(s)|}}\, ds,
$$
induces  a map $\mathbf{m}:\mathcal R^{2d-2}_<  \rightarrow  \mathcal R^{d-1}_+$:
$$
\mathbf{m}: (c_{2d-2}, \dots, c_1) \mapsto \Big(\mu\big([c_{2d-2}, c_{2d-3}]\big),\dots,\mu\big([c_{2}, c_{1}]\big)\Big),
$$
where $\mathcal R^{2d-2}_< $ denotes the set of all finite increasing sequences of $2d-2$ of real numbers.
 The frequency map can be defined as:
$$
F(\gamma_1, \dots, \gamma_{d-1})
=(f_1, \dots, f_{d-1}),
$$
with
$$
f_j=\sum_{k=d-j}^d\mu\big([c_{2k}, c_{2k-1}]\big), \quad j=1,\dots, d-1.
$$
Let us fix a canonical basis {\color{black} $\{\mathbf{a}_1, \dots, \mathbf{a}_{d-1}, \mathbf{b}_1,\dots, \mathbf{b}_{d-1}\}$}
 of homology  on the curve
$\hat \Curve$ {\color{black}in the following way:
\begin{itemize}
\item the projections to the $(t,s)$-plane of cycles $\mathbf{a}_p$, $\mathbf{b}_p$ are clockwise oriented paths surrounding the closed segments $[c_{2(d+1-p)-1}, c_{2(d-p)}]$, $[c_{2d}, c_{2(d+1-p)-1}]$ respectively, for each $1\le p\le d-1$;
\item $\mathbf{b}_p$ and $\mathbf{a}_p$ intersect each other at exactly one point, where the pair of their tangent vectors is a positively oriented basis, for each $1\le p\le d-1$.
\end{itemize}
}
The differentials:
$$
\omega_j=\frac{s^{j-1}}{\sqrt{\hat{\mathcal{P}}_{2d}(s)}}ds, \quad j\in\{1,\dots, d-1\},
$$
form a basis of holomorphic differentials on the curve
$\hat \Curve$. 
Denote by $\eta$ the differential of the third kind, which has zero $\bf a$-periods. 
We denote the {\color{black} scaled} $\bf b$-periods of $\eta$ as:
$$
y_j=\frac{1}{2\pi i}\int_{\mathbf{b}_j}\eta, \quad j\in\{1, \dots, d-1\}.
$$
By using {\color{black} Riemann} Bilinear Relations between the differentials of the first and third kind, see for example {\color{black}\cite[p.~257]{Sp1957}}, one can easily get the following relations:
\begin{equation}\label{eq:frequency}
{\color{black}
\sum_{p=1}^{d-1}y_p\int_{\mathbf{a}_p}\omega_j=2\int_{c_1}^\infty \omega_j, \quad j\in\{1, \dots, d-1\}.
}
\end{equation}

\begin{proposition}
We have:
\begin{itemize}
	\item $|y_j|=f_j$, for all $j\in\{1, \dots, d-1\}$; and
	\item the frequency map is monotonic: $f_1<f_2<\dots<f_{d-1}$.
\end{itemize}
\end{proposition}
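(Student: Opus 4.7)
The plan is to establish the two assertions separately by a direct evaluation of the period integrals, exploiting the hyperelliptic structure of $\hat\Curve$ and the normalization of $\eta$.

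For the equality $|y_j|=f_j$, I would first represent $\mathbf{b}_j$ on $\hat\Curve$ by its natural real representative: the closed curve that runs just above the real axis on one sheet from $c_{2d}$ to $c_{2(d+1-j)-1}$ and returns just below on the opposite sheet. Because the two sheets differ only by a sign of $\sqrt{\hat{\mathcal{P}}_{2d}}$, the two halves contribute equally, giving
$$
\int_{\mathbf{b}_j}\eta \;=\; 2\int_{c_{2d}}^{c_{2(d+1-j)-1}}\frac{\eta_{d-1}(s)}{\sqrt{\hat{\mathcal{P}}_{2d}(s)}}\,ds,
$$
with the upper-sheet branch of the square root understood. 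This real-line integral splits into pieces on the support intervals $[c_{2k},c_{2k-1}]$, where $\hat{\mathcal{P}}_{2d}<0$ and $\sqrt{\hat{\mathcal{P}}_{2d}}$ is purely imaginary, and on the intermediate gap intervals, where it is real. By the defining conditions \eqref{eq:diff}, every gap contribution is zero, so only the support pieces survive.

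Next I would pin down the signs. The orthogonality \eqref{eq:diff} forces $\eta_{d-1}$ to change sign on each of the $d-1$ gaps; since $\deg\eta_{d-1}=d-1$, this accounts for all its zeros, so $\eta_{d-1}$ has a definite sign on every support interval, alternating from one support to the next — concretely, $\operatorname{sign}\eta_{d-1}=(-1)^{k-1}$ on the $k$-th support from the right. In parallel, tracking the upper-sheet value of $\sqrt{\hat{\mathcal{P}}_{2d}(s)}$ as $s$ decreases from $+\infty$ shows that each crossed branch point multiplies the branch by $i$, so that on the $k$-th support $\sqrt{\hat{\mathcal{P}}_{2d}(s)}=i(-1)^{k-1}\sqrt{|\hat{\mathcal{P}}_{2d}(s)|}$. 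The two alternations cancel, and the ratio $\eta_{d-1}/\sqrt{\hat{\mathcal{P}}_{2d}}$ becomes $-i\,|\eta_{d-1}|/\sqrt{|\hat{\mathcal{P}}_{2d}|}$ on every support interval, with a sign independent of $k$. Each support interval therefore contributes $-i\pi\mu([c_{2k},c_{2k-1}])$, and summing over all support intervals contained in $[c_{2d},c_{2(d+1-j)-1}]$ produces $\int_{\mathbf{b}_j}\eta=-2\pi i\,f_j$. Dividing by $2\pi i$ gives $y_j=-f_j$, and hence $|y_j|=f_j$.

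The monotonicity follows at once from the definition: $f_{j+1}-f_j$ equals the equilibrium measure of a single support interval of $E$. Since every zero of $\eta_{d-1}$ lies in a gap, the density $|\eta_{d-1}(s)|/(\pi\sqrt{|\hat{\mathcal{P}}_{2d}(s)|})$ is strictly positive almost everywhere on any support interval, so each such measure is strictly positive, and $f_1<f_2<\cdots<f_{d-1}$ is immediate.

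The main obstacle is the sign bookkeeping in the first part: one must verify that the two independent alternations — of $\operatorname{sign}\eta_{d-1}$ across consecutive supports and of the upper-sheet branch of $\sqrt{\hat{\mathcal{P}}_{2d}}$ by a factor of $i$ at each branch point — cancel exactly, so that $|y_j|$ is assembled as a sum of positive harmonic measures rather than an oscillating signed combination. Once this is in place, the rest of the argument is routine.
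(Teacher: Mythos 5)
Your argument is correct and follows essentially the same route as the paper's: the published proof consists precisely of the sign observations you verify, namely that $\eta_{d-1}$ has exactly one simple zero in each gap (forced by \eqref{eq:diff} and the degree count, hence constant and alternating sign on the support intervals) and that the analytically continued branch of $\sqrt{\hat{\mathcal{P}}_{2d}}$ also alternates in sign from one support interval to the next, so the two alternations cancel and the $\mathbf{b}$-period assembles the equilibrium measures with a single overall sign. You have merely written out in full the reduction of the period integral to a real-line integral and the final monotonicity step, both of which the paper leaves implicit.
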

\begin{proof}
{\color{black} The validity of $|y_j|=f_j$ follows from the following observations:
\begin{itemize}	
\item[(i)] the polynomial $\eta_{d-1}$ has constant sign on each individual {\color{black}closed interval}
$[c_{2k}, c_{2k-1}]$;
\item[(ii)] on any two {\color{black}consecutive closed intervals} this polynomial has opposite
signs;
\item[(iii)] the analytic extension of the square root
$\sqrt{\hat{\mathcal {P}}_{2d}}$ from one neighbor
{\color{black}interval} to the other changes its sign to opposite.
\end{itemize}

The item (i) follows from the fact  that the polynomial $\eta_{d-1}$ has at least one root in each
one of the $d-1$ gap intervals $(c_{2j+1}, c_{2j})$, which is a consequence of the conditions
\eqref{eq:diff}.  Since the degree of $\eta_{d-1}$ is equal to $d-1$, it follows that all roots of that polynomial
are simple, and there is exactly one root in each of the gap intervals.
}
\end{proof}


Let us denote
\begin{gather*}
\Omega_p
=
\left(\int_{\mathbf{a}_p}\omega_1, \dots, \int_{\mathbf{a}_p}\omega_{d-1}\right)^T, \quad p\in\{1, \dots, d-1\};
\\
\Omega=2\left(\int_{c_1}^\infty\omega_1, \dots, \int_{c_1}^\infty\omega_{d-1}\right)^T.
\end{gather*}
The lattice in $\mathbb R^{d-1}$
$$
\Lambda=\mathbb Z\langle \Omega_1, \dots, \Omega_{d-1}\rangle
$$
is non-degenerate, since the matrix of periods of differentials $\omega_j$ over cycles $\mathbf{a}_p$ is non-degenerate (see, for example~{\color{black}\cite[Chapter 10]{Sp1957}}).
We introduce the following hyperplane in $\mathbb R^d$:
$$
\Pi = \langle \Omega_1, \dots, \Omega_{d-1}, \Omega\rangle,
$$
which is generated by $d-1$ rows of the matrix
$[\Omega_1, \dots, \Omega_{d-1}, \Omega]$.

Then {\color{black} $((-1)^{d-1}y_1, \dots, (-1)^{d-j}y_j, \dots, (-1)y_{d-1}, 1)$} are the Pl\"ucker coordinates of the hyperplane $\Pi$ and they generate the billiard dynamics. 

To the $k$-th iteration of the billiard dynamics, we associate the following:
\begin{itemize}
	\item the vector
	$
	\hat y^{(k)}
	=
	\left(ky_1,\dots, ky_{d-1}\right)$;
	\item the winding numbers $
	\left(m_1^{(k)}, m_2^{(k)}, \dots, m_{d-1}^{(k)}\right)
	=
	\left([|\hat y^{(k)}_1|], [|\hat y^{(k)}_2|], \dots, [|\hat y^{(k)}_{d-1}|]\right)\in \mathbb Z^{d-1}$; and
	\item the residual vector $
	w^{(k)}=\left(w^{(k)}_1, \dots, w^{(k)}_{d-1}\right)
	\in [0, 1)^{d-1},
	$	
\end{itemize}
where
$w^{(k)}_j=\{\hat y^{(k)}_j\}$
 is the fractional part of $|\hat y^{(k)}_j|$, while $[|\hat y^{(k)}_j|]$ is the integer part of $|\hat y^{(k)}_j|$.

\begin{definition}\label{def:r-resonant}
The number of zero components of the vector $w^{(k)}$ will be called \emph{the resonance of the billiard
$k$-path}
and denoted as $r(k)$. The maximal number $r$ among all the numbers $r(k)$ for $k>d$ will be called \emph{the resonance of the billiard trajectory}.
The minimal $k$ such that $r(k)=r$ and $k>d$ will be called \emph{the weak period of the billiard trajectory} and denoted $k_0$.
The associated winding numbers
$\left(m_1^{(k_0)}, m_2^{(k_0)}, \dots, m_{d-1}^{(k_0)}\right)$
are called \emph{the weak winding numbers of the billiard trajectory}.
\end{definition}

\begin{remark} By definition, the resonance of the billiard
$k$-path, the resonance of the billiard trajectory, the weak period of the billiard trajectory, and the weak winding numbers of the billiard trajectory introduced in Definition \ref{def:r-resonant} are the same for all billiard trajectories within $\E$ with fixed caustics
$\Q_{\gamma_1}$, \dots, $\Q_{\gamma_{d-1}}$.
\end{remark}

\begin{proposition}
{\color{black}
Consider a trajectory $\pazocal{T}$ of the billiard within $\E$.
Suppose that $r(k)$ is the resonance of the billiard $k$-path of that trajectory, $r$ the resonance and $k_0$ the weak period.
Then:}
\begin{itemize}
\item[(i)] {\color{black} There are exactly $r(k)$ Jacobi coordinates $\lambda_{\alpha_1}, \dots,\lambda_{\alpha_{r(k)}}$, with the following property:
	along some $k$ consecutive segments of $\pazocal{T}$,
the coordinate $\lambda_{\alpha_j}$ traces its corresponding closed interval $[b_{2\alpha_j-2},b_{2\alpha_j-1}]$ integer number of times.
Moreover, that number of times equals $ky_{\alpha_j}=m_{\alpha_j}^{(k)}$ and the same coordinates will satify the same property along any $k$ consecutive segments of any billiard trajectory having the same caustics as $\pazocal{T}$.}

 In this notation $w_{\alpha_j}^{(k)}=0$, $j\in\{1, \dots, r(k)\}$.
\item[(ii)] The maximal number of rationally dependent among numbers $(y_1, \dots, y_{d-1})$ {\color{black}equals} $r$ and $k_0y_{\alpha_1}$, \dots, $k_0y_{\alpha_r}$ are integers.
\item[(iii)] If {\color{black}$\pazocal{T}$} is periodic with period $n>d$ then $k_0=n$,
$$\left(m_1^{(k_0)}, m_2^{(k_0)}, \dots, m_{d-1}^{(k_0)}\right)=\left(k_0y_1, \dots, k_0y_{d-1}\right)$$
 and the weak winding numbers coincide with winding numbers from Definition \ref{def:windingnumbers}:
    $$\left(m_1^{(k_0)}, m_2^{(k_0)}, \dots, m_{d-1}^{(k_0)}\right)=\left(m_1, m_2, \dots, m_{d-1}\right).$$
    \end{itemize}
\end{proposition}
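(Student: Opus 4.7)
The whole plan rests on one structural fact, and then reads off the three parts from it: the ellipsoidal billiard linearizes on the Jacobian of the hyperelliptic curve $\hat\Curve$, and each reflection corresponds to translation by one and the same vector; the Pl\"ucker coordinates $((-1)^{d-1}y_1,\dots,(-1)y_{d-1},1)$ of $\Pi$ encode exactly that translation vector in terms of the $\mathbf{a}$-cycle basis. Consequently, along any $k$ consecutive segments, the Jacobi coordinate $\lambda_j$ accumulates an ``angle'' of magnitude $|ky_j|$ on its interval $[b_{2j-2},b_{2j-1}]$, with integer part $m_j^{(k)}$ counting the number of complete traversals and fractional part $w_j^{(k)}$ measuring the residual offset.

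For part~(i) I would observe that $w_j^{(k)}=0$ is equivalent to $\lambda_j$ returning exactly to its starting value after $k$ reflections, in which case $\lambda_j$ traces $[b_{2j-2},b_{2j-1}]$ an integer number of times equal to $ky_j=m_j^{(k)}$. Since $y_1,\dots,y_{d-1}$ depend on the caustics alone (through the third-kind differential $\eta$ on $\hat\Curve$), the distinguished set of indices $\{\alpha_1,\dots,\alpha_{r(k)}\}$ is the same for every trajectory sharing those caustics.

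For part~(ii), the identities $w_{\alpha_j}^{(k_0)}=0$ give $k_0 y_{\alpha_j}\in\mathbb Z$ immediately, so at least $r$ of the $y_j$'s are rational. Conversely, if $r+1$ of them were rational, a sufficiently large common multiple $k>d$ of their denominators would force $r(k)\ge r+1$, contradicting the maximality in the definition of $r$.

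For part~(iii), Cartesian periodicity with period $n$ forces closure of the Jacobi coordinates after $n$ steps, hence $ny_j\in\mathbb Z$ for every $j$, so $r(n)=d-1$; this pins down $r=d-1$ and $k_0\le n$, and then the formula $m_j^{(k_0)}=k_0|y_j|$ is automatic. The main obstacle is the reverse inequality $k_0\ge n$: closure in the Jacobi coordinates at step $k_0$ only guarantees closure of the Cartesian coordinates modulo reflections through the hyperplanes $\{x_i=0\}$, so one must rule out the possibility that $k_0<n$. I would handle this via the parity constraints recalled just before Definition~\ref{def:windingnumbers}: whenever $b_{2j-2}$ or $b_{2j-1}$ lies in $\{a_1,\dots,a_d\}$, the number $m_j^{(k_0)}=k_0 y_j$ must be even, and these even parities are precisely what promote Jacobi-closure to true Cartesian closure. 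Once $k_0=n$ is in hand, the identification of the weak winding numbers with the classical $m_j$'s from Definition~\ref{def:windingnumbers} follows, since both equal $k_0 y_j=ny_j$.
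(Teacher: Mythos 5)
Your overall route is the same as the paper's: the printed proof invokes Jacobi's theorem (the vanishing of the sums $\sum_j \lambda_j^{i}\,d\lambda_j/\sqrt{\Pol(\lambda_j)}$ along the trajectory), integrates these differentials over $k$ consecutive segments, splits each integral into complete traversals of the intervals $[b_{2\alpha-2},b_{2\alpha-1}]$ plus a remainder, and compares with the relations \eqref{eq:frequency}; your ``translation on the Jacobian by a fixed vector'' formulation is the same computation in different words. Parts (i) and (ii) of your plan are sound, and in (ii) you are in fact more explicit than the paper, which does not spell out the converse direction (that more than $r$ rational $y_j$'s would contradict the maximality of $r$ over $k>d$).

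There is, however, a genuine gap in part (iii), precisely at the step you yourself single out as the main obstacle. You assert that $m_j^{(k_0)}=k_0y_j$ ``must be even'' whenever $b_{2j-2}$ or $b_{2j-1}$ lies in $\{a_1,\dots,a_d\}$, citing the parity constraints recalled before Definition \ref{def:windingnumbers}. But those constraints are derived there from the fact that a \emph{closed} polygon crosses each coordinate hyperplane an even number of times; for the $k_0$-path, which at that stage of the argument is only known to close in the Jacobi elliptic coordinates, no such evenness is available, and it is exactly the possible failure of these parities that would permit $k_0<n$ (closure in elliptic coordinates only up to reflections in the coordinate hyperplanes, i.e.\ an elliptic period $\tilde n$ strictly smaller than $n$). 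So the parity appeal is circular as written and the inequality $k_0\ge n$ is not established. For comparison, the paper's own one-line treatment of (iii) (``integrating the differentials along one period'') only yields $r=d-1$ and $k_0\le n$ and does not engage with this direction at all, so you have correctly identified a real subtlety; but your proposed fix does not close it.
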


\begin{proof} According to the Jacobi theorem {\color{black}\cite[Lecture 30]{Jac} and \cite{Dar1870}},
	{along the billiard trajectories with the caustics \color{black}$\Q_{\gamma_1}$, \dots, $\Q_{\gamma_{d-1}}$}  the following differentials {\color{black}equal} zero:
{\color{black}
$$
\sum_{j=1}^d \dfrac{d\lambda_j}{\sqrt{\Pol(\lambda_j)}},
\quad
\sum_{j=1}^d \dfrac{\lambda_j d\lambda_j}{\sqrt{\Pol(\lambda_j)}},\quad
\dots,\quad
\sum_{j=1}^d \dfrac{\lambda_j^{d-2}d\lambda_j}{\sqrt{\Pol(\lambda_j)}},
$$
where $\Pol(\lambda)$ is given by \refeq{eq:P}.
}

Now, we integrate these differentials along $k$ successive segments of a billiard trajectory, rewrite the integrals along the {\color{black} closed intervals} $[b_{2\alpha-2},b_{2\alpha-1}]$ corresponding to the Jacobi coordinates $\lambda_{\alpha}$, for $\alpha\in\{1, \dots, d\}$ and compare with the relations \eqref{eq:frequency}.
This proves (i).

Similarly, integrating the differential along $k_0$ successive segments of the billiard trajectory we get (ii). Finally, for periodic trajectories with period $n$, integrating the differentials along one period of a trajectory, we get (iii).
\end{proof}

\begin{theorem}\label{th:length-periodic}
Suppose that billiard trajectories within $\E$ with caustics $\Q_{\gamma_1}$, \dots, $\Q_{\gamma_{d-1}}$ are periodic. Denote their winding numbers $(m_0,\dots,m_{d-1})$.
Then all those trajectories have the same length $L$, which is equal to:
$$
L=L(\gamma_1, \dots, \gamma_{d-1})=\sum_{j=1}^{d}(-1)^{j+d} m_{j-1}\int_{b_{2j-2}}^{b_{2j-1}}
\frac{\lambda_j^{d-1}d\lambda_j}{+\sqrt{\Pol(\lambda_j)}},
$$
{\color{black}where
$\Pol(\lambda)=(a_1-\lambda)\dots(a_d-\lambda)(\gamma_1-\lambda)\dots(\gamma_{d-1}-\lambda)$.}
\end{theorem}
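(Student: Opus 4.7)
\noindent\textit{Proof plan.} The plan is to express the Euclidean arc-length in Jacobi elliptic coordinates, integrate it over one full period of the trajectory, and reduce each piece to a real integral over the interval $[b_{2j-2},b_{2j-1}]$.

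First, I would invoke the identity that is complementary to the Jacobi relations used in the proof of the preceding proposition: along any straight-line segment of a billiard trajectory with caustics $\Q_{\gamma_1},\dots,\Q_{\gamma_{d-1}}$,
\begin{equation*}
ds \;=\; c\sum_{j=1}^{d}(-1)^{j+d}\,\frac{\lambda_j^{d-1}\,d\lambda_j}{\sqrt{\Pol(\lambda_j)}},
\end{equation*}
for a consistent choice of branches of the square root and a universal normalization constant $c$ fixed by the Euclidean metric in ellipsoidal coordinates. This is the missing $d$-th separated relation in Hamilton--Jacobi theory: on the curve $t^2=\Pol(\lambda)$ from \eqref{eq:Curve}, the differentials $\lambda^{k-1}d\lambda/t$ for $k=1,\dots,d-1$ are of the first kind and their sums over $j$ vanish along the trajectory by Jacobi's theorem, while the one with $k=d$ is of the second kind and is precisely what encodes arc length.

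Integrating $ds$ along one closed trajectory, I would observe that on every monotone half-arc of $\lambda_j$ between consecutive endpoint hits the analytic continuation of $\sqrt{\Pol(\lambda_j)}$ along the trajectory matches the sign of $d\lambda_j$, so each such half-arc contributes the positive quantity $\int_{b_{2j-2}}^{b_{2j-1}}\lambda_j^{d-1}\,d\lambda_j/(+\sqrt{\Pol(\lambda_j)})$. Collecting the half-arcs of $\lambda_j$ accumulated in one full period and absorbing the constant $c$ yields exactly the coefficient $m_{j-1}$ of Definition \ref{def:windingnumbers}, while the sign $(-1)^{j+d}$ is inherited from the $ds$ identity. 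That the resulting $L$ depends only on $(\gamma_1,\dots,\gamma_{d-1})$, and hence is common to all periodic trajectories sharing these caustics, then follows from the higher-dimensional Poncelet theorem recalled in the introduction, which forces the entire winding vector $(m_0,\dots,m_{d-1})$ to be a function of the caustics alone.

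I expect the principal obstacle to be the correct sign- and constant-bookkeeping in the arc-length identity: the alternating factor $(-1)^{j+d}$, the normalization $c$, and the precise matching of the winding number $m_{j-1}$ with the count of half-arcs of $\lambda_j$ per period all hinge on a careful tracking of branches of the hyperelliptic curve $\hat\Curve$ and of the orientation of the cycles $\mathbf a_j$ relative to the path traced by the billiard dynamics in each $\lambda_j$-plane. Once these are pinned down, the conversion from a line integral along the billiard trajectory to the stated weighted sum of positive real integrals is routine via the monotonicity of $\lambda_j$ between successive extrema.
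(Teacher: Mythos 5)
Your proposal follows essentially the same route as the paper: both start from Jacobi's expression of the Euclidean arc-length element in elliptic coordinates as $\tfrac12\sum_j\lambda_j^{d-1}\,d\lambda_j/\sqrt{\Pol(\lambda_j)}$, integrate over one period, convert the integral for each $\lambda_j$ into $m_{j-1}$ (back-and-forth) traversals of $[b_{2j-2},b_{2j-1}]$, and recover the alternating sign $(-1)^{j+d}$ from the choice of branch of $\sqrt{\Pol}$ on successive intervals, with the independence of the particular trajectory coming from the fact that the winding numbers and caustics determine everything. The only cosmetic difference is that you fold the sign into the $ds$ identity and leave the normalization as an unspecified constant $c$, whereas the paper writes the factor $\tfrac12$ explicitly and introduces the signs afterwards.
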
	
\begin{proof}
{\color{black}As Jacobi teaches in \cite[Lecture 30]{Jac}},
the Euclidean length element along the lines tangent to $\Q_{\gamma_1}$, \dots, $\Q_{\gamma_{d-1}}$ {\color{black}is}:
$$
d\ell=\frac12\sum_j\int\frac{\lambda_j^{d-1}d\lambda_j}{\sqrt{\Pol(\lambda_j)}}.
$$
We integrate the last equation along a given periodic billiard trajectory. Using the property of the elliptic coordinates along the periodic billiard trajectory, namely that the elliptic coordinate $\lambda_j$ {\color{black}traces} its  {\color{black}closed} interval $[b_{2j-2}, b_{2j-1}]$ back and forth exactly $m_j$ times along one period of the billiard trajectory. Thus, we get
$$L=\sum_{j=1}^{d}m_{j-1}\int_{b_{2j-2}}^{b_{2j-1}}
\frac{\lambda_j^{d-1}d\lambda_j}{\sqrt{\Pol(\lambda_j)}}.
$$
Note that the signs of one branch of the square root on the closed intervals $[b_{2j-2},b_{2j-1}]$ alternate, thus the statement follows.

We see that the length $L$ does depend only on the caustics, and is equal for all trajectories sharing these caustics:
$L=L(\gamma_1, \dots, \gamma_{d-1})$.
\end{proof}

{\color{black}
Directly using  Theorem 5.15 from \cite{KLN1990}, we get the following:

\begin{theorem}\label{th:r-resonant}
Let $E= \bigcup_{p=1}^{d}[c_{2p}, c_{2p-1}]$. Suppose $k$ is a given integer, $k\ge d$.
 There exists a nonnegative integer $N(k)<d$ and the {\color{black}open} intervals $(c_{2(d+1-\alpha_j)-1}, c_{2(d-\alpha_j)})$, $j\in\{1, \dots, N(k)\}$  with  unique
real numbers $\zeta_j\in (c_{2(d+1-\alpha_j)-1}, c_{2(d-\alpha_j)})$, such that the polynomial $S_{(k)}$ of degree $N(k)$ with the zeros $\zeta_j$ satisfies a generalized Pell's equation
on $E$:
\begin{equation}\label{eq:r-resonant}
S_{(k)}^2=A_{k+N(k)}^2(z)-\prod_{j=1}^{2d}(z-c_j)B_{k+N(k)-d}^2(z).
\end{equation}
Here the polynomials $A_{k+N(k)}$ and $B_{k+N(k)-d}$ are of the degrees $k+N(k)$ and $k+N(k)-d$ respectively.
 \end{theorem}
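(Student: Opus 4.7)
The plan is to deduce this as a direct application of \cite[Theorem 5.15]{KLN1990}, the solution of the (unrestricted) Extremal Problem recalled in Section \ref{sec:EP}. I would specialize that theory to the degenerate setting $\pazocal P(z)\equiv 1$ (the $n=0$ case) and take the extremal-problem degree parameter equal to $k$. The KLN machinery then produces monic polynomials $S_m$, $A_{m+g_m}$ and $B_{m+g_m-d}$ of degrees $g_m$, $m+g_m$ and $m+g_m-d$ respectively, satisfying exactly the desired generalized Pell equation on $E$, with $S_m$ having simple zeros — at most one inside each of the $d-1$ open gap intervals $(c_{2j-1},c_{2j-2})$ — and with $0\le g_m\le d-1$. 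Setting $N(k):=g_m$, $S_{(k)}:=S_m$, and letting the indices $\alpha_j$ record which gap intervals contain a zero of $S_{(k)}$, yields the existence part of the statement.

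Uniqueness of the $\zeta_j$ would follow from Theorem \ref{th:pell-unique}, applied with $\I_d=\I_d^*=E$ to two putative solutions of the same generalized Pell equation. All five hypotheses (i)--(v) of that theorem become automatic in this setting: (i) and (ii) because the interval systems coincide; (iii) and (iv) because both extremal rationals arise from the same KLN construction with the same degree data on the same set, hence have matching extreme-point counts and matching uniform norms; and (v) because the KLN characterization determines the gap intervals hosting a zero of the denominator purely in terms of $E$ and $k$, so both solutions populate the same gap intervals. The conclusion $A_{k+N(k)}/S_{(k)}=\pm\tilde A_{k+N(k)}/\tilde S_{(k)}$, combined with the monic normalization, then forces $\zeta_j=\tilde\zeta_j$.

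The principal thing to verify is that the degree $g_m$ produced by the $(E,m)$-representation theorem (Theorem \ref{th:kln}) is strictly less than $d$ and that the zeros of $S_m$ really do lie in the interiors of the gap intervals. Since $\pazocal P\equiv 1$, the logarithmic moments in the integral equations of Theorem \ref{th:kln} collapse, so the positivity-and-integrality conditions on the numbers $N_p$ reduce to conditions on the periods of the third-kind differential $\eta$ from Section \ref{sec:rresonant}, rescaled by the factor $k$. The strict monotonicity $f_1<f_2<\dots<f_{d-1}$ of the frequency map, established earlier in Section \ref{sec:rresonant}, ensures that among $ky_1,\dots,ky_{d-1}$ at most $d-1$ can be non-integers, giving $N(k)<d$. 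In fact the count will be $N(k)=d-1-r(k)$ with $r(k)$ the resonance from Definition \ref{def:r-resonant}, the $\alpha_j$ being exactly the indices of the Jacobi coordinates that are not in resonance at step $k$. Matching this combinatorial identification between the analytic side (integrality of $N_p$) and the billiard side (resonance of $\pazocal T$) is the main, and essentially only, conceptual step — once it is made, everything else is bookkeeping.
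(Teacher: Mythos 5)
Your proposal is correct and takes essentially the same route as the paper: the paper derives Theorem \ref{th:r-resonant} simply by invoking Theorem 5.15 of \cite{KLN1990} (the solution of the unrestricted extremal problem with $\pazocal P\equiv 1$), exactly as in your first paragraph, and records no further argument. Your additional elaboration of uniqueness via Theorem \ref{th:pell-unique} and of the identification $N(k)=d-1-r(k)$ is consistent with the paper's surrounding discussion (Definitions \ref{def:r-resonant} and \ref{def:adjoint}) but goes beyond what the paper writes down for this statement.
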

}

{\color{black}
\begin{definition}\label{def:k-type} For a given union $E= \cup_{p=1}^{d}[c_{2p}, c_{2p-1}]$ of $d$ {\color{black}closed intervals}, we define its \emph{$k$-type} as a $(d-1)$-tuple of zeros and ones,
which has $1$ exactly at $\alpha_j$-th places for $j\in\{1, \dots, N(k)\}$. 
The $k$-type of $E$ records those $N$ out of $d-1$ gap intervals $(c_{2(d+1-\alpha_j)-1}, c_{2(d-\alpha_j)})$ which contain zeros of $S_{(k)}$ from the presentation \eqref{eq:r-resonant}.
\end{definition}
}

{\color{black} Dokaz izbaciti.}

\begin{definition}\label{def:adjoint}
The numbers of zeros of the polynomial $A_{k+N(k)}$ in the {\color{black}closed} intervals
$[c_{2d}, c_2]$, $\dots,$ $[c_{2d}, c_{2d-2}]$ are called
\emph{the adjoint winding numbers
$(\hat m_1^{(k)}, \hat m_2^{(k)}, \dots, \hat m_{d-1}^{(k)})$
of the billiard $k$-path}.  {\color{black} The $k$-type from Definition \ref{def:k-type} is called the adjoint type of the billiard $k$-path}.

Let $N(k_1)$ be the minimal of all $N(k)$, for $k>d$. Then $\hat r=d-1-N(k_1)$ is called \emph{the adjoint
resonance of the billiard trajectory}.

For $k=k_0$ the numbers $\left(\hat m_1^{(k_0)}, \hat m_2^{(k_0)}, \dots, \hat m_{d-1}^{(k_0)}\right)$ are called
\emph{the adjoint winding numbers of the billiard trajectory}.
Denote by $\tau^{(k)}=\left(\tau^{(k)}_1, \dots, \tau^{(k)}_{d-1}\right)$ the numbers of zeros of the polynomial $B_{k+\hat s(k) -d}$ on the {\color{black}open} intervals
 $(c_2, c_1), \dots, (c_{2d}, c_{2d-1})$.
 \end{definition}

\begin{proposition}\label{th:winding}
\begin{itemize}
 \item[\textbf{(a)}] The adjoint winding numbers satisfy:
 $$
 \hat m^{(k)}_{j}=\hat m^{(k)}_{j+1}+\tau^{(k)}_{j}+1,
 \quad
 1\le j\le d-2.
 $$
 \item[\textbf{(b)}] The adjoint winding numbers are strictly decreasing:
 $$
 \hat m^{(k)}_{d-1}<\hat m^{(k)}_{d-2}<\dots<\hat m^{(k)}_1<\hat m^{(k)}_0.
 $$
\end{itemize}
\end{proposition}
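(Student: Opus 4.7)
My plan is to exploit the interlacing structure built into the $(E,k+N(k))$-representation of $S_{(k)}^2$ provided by Theorem \ref{th:r-resonant}: all zeros of $A_{k+N(k)}$ and $B_{k+N(k)-d}$ are real, simple, belong to $E=\bigcup_{p=1}^{d}[c_{2p},c_{2p-1}]$, and by the remark after the definition of the $(E,m)$-representation they never coincide with any $c_j$. Moreover, in each of the $d$ open intervals $(c_{2p},c_{2p-1})$, the polynomial $A_{k+N(k)}$ has exactly one more zero than $B_{k+N(k)-d}$, with the two zero sets alternating. Once this is in hand, the proposition is purely a bookkeeping translation of the cumulative counts $\hat m_j^{(k)}$ into per-interval counts.

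First I would introduce, for each $p\in\{1,\dots,d\}$, the integer $\ell_p$ equal to the number of zeros of $A_{k+N(k)}$ in the open interval $(c_{2p},c_{2p-1})$. The interlacing condition immediately gives that $B_{k+N(k)-d}$ has exactly $\ell_p-1$ zeros in the same open interval; in particular $\ell_p\ge 1$ for every $p$, because a polynomial cannot have a negative number of zeros. Using $c_{2d}<c_{2d-1}<\dots<c_{1}$ together with the fact that $A_{k+N(k)}$ does not vanish at any $c_j$, the closed interval $[c_{2d},c_{2j}]$ picks up exactly the zeros of $A_{k+N(k)}$ lying in the intervals $[c_{2p},c_{2p-1}]$ for $p\ge j+1$, so
\[
\hat m_j^{(k)}=\sum_{p=j+1}^{d}\ell_p,\qquad j=1,\dots,d-1,
\]
with the natural convention $\hat m_0^{(k)}=\sum_{p=1}^{d}\ell_p=k+N(k)$.

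Subtracting consecutive identities yields $\hat m_j^{(k)}-\hat m_{j+1}^{(k)}=\ell_{j+1}$. By the interlacing, $\ell_{j+1}-1$ is precisely the number of zeros of $B_{k+N(k)-d}$ in $(c_{2(j+1)},c_{2(j+1)-1})$, and once $\tau_j^{(k)}$ is read with this indexing convention we obtain $\hat m_j^{(k)}-\hat m_{j+1}^{(k)}=\tau_j^{(k)}+1$, which is exactly (a). Part (b) is then immediate: since $\ell_p\ge 1$ for every $p$, every difference $\hat m_j^{(k)}-\hat m_{j+1}^{(k)}$ is at least one, so the sequence $\hat m_0^{(k)},\hat m_1^{(k)},\dots,\hat m_{d-1}^{(k)}$ is strictly decreasing.

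The only real obstacle is cosmetic rather than substantive: one must align the indexing convention chosen for $\tau^{(k)}$ with the left-to-right ordering of the intervals of $E$, and verify that the endpoint $c_{2j}$ in $[c_{2d},c_{2j}]$ does not contribute a spurious zero. The second point is exactly where the remark $A_{m}(c_j)\ne 0$, $B_{m-d}(c_j)\ne 0$ is essential, and the first is a pure matter of matching notation. No analytic input beyond the alternance condition already embedded in the $(E,m)$-representation is required.
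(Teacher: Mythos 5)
Your argument is correct and follows essentially the same route as the paper's proof: both express the cumulative counts $\hat m_j^{(k)}$ as sums of per-interval zero counts of $A_{k+N(k)}$, take consecutive differences, and invoke the alternation property of the $(E,m)$-representation (one more zero of $A$ than of $B$ in each closed interval of $E$) to obtain (a), with (b) following since each per-interval count is at least one. The only difference is cosmetic: the paper cites Theorem 5.12 of Krein--Levin--Nudelman for the cumulative count, whereas you read it directly off Definition \ref{def:adjoint} together with the remark that $A$ and $B$ do not vanish at the $c_j$.
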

\begin{proof}
{\color{black} To simplify the notation, we will use $\hat m$ to denote $\hat m^{(k)}$.}
It follows from {\color{black}\cite[Theorem 5.12]{KLN1990}}  that the number of  zeros of the polynomial $A_{k+N(k)}$ on the {\color{black}open} interval $(c_{2d},c_{2j+1})$ is equal to $\hat m_j$, for
$j\in\{0,\dots,d-1\}$.
The difference $\hat m_{j-1}-\hat m_j$ is thus equal to the number of zeros of the polynomial $A_{k+N(k)}$ on the semi-closed interval $[c_{2j+1},c_{2j-1})$.
Let us recall the mutual order of zeros of polynomials $A_{k+N(k)}$ and $B_{k+N(k) -d}$: they alternate on the {\color{black}closed} interval $[c_{2j+1},c_{2j-1}]$. Thus
the number of zeros of the polynomial $A_{k+N(k)}$ on the {\color{black}closed} interval $[c_{2j+1},c_{2j-1}]$ {\color{black}is bigger by $1$} than the number of zeros of $B_{k+N(k)-d}$ on that interval. This proves (a); (b) follows immediately.
\end{proof}

{\color{black}
\begin{remark}\label{rmk:weakadjoint}
From Theorem \ref{th:r-resonant} (see also
Theorem \ref{the:pell-weak-d} for further discussion) it follows that a given path of $k$-bounces of a given billiard trajectory can be extended
to a periodic billiard trajectory with the same caustics with the period $n=k+N(k)$ with $k$ bounces  off $\E$ and one reflection off each of distinct quadrics $\E_{\mu_j}$, $j\in\{1, \dots, N(k)\}$, in accordance with relation \eqref{eq:r-resonant}: {\color{black} $\zeta_j=\mu_j^{-1}\in
[c_{2(d+1-\alpha_j)-1}, c_{2(d-\alpha_j)}]$. 
The type of the $k$-billiard path determines the types of the quadrics $\E_{\mu_j}$, $j\in\{1, \dots, N(k)\}$.}  In accordance with the generalized Great Poncelet Theorem \cite{CCS1993, DragRadn2011book}, this periodicity property share all the trajectories with the same caustics and boundaries. The order of reflections off different quadrics is irrelevant according to the Double Reflection Theorem, see {\color{black}\cite[Chapter 5]{DragRadn2011book}}.
\end{remark}
Using the terminology of Remark \ref{rmk:weakadjoint} we get
\begin{corollary}\label{cor:extension}
A billiard path of $k>d$ reflections off $\E$ can be extended to a periodic trajectory with the period $n=k+N(k)$ with $k$ bounces  off $\E$ and one reflection off each of distinct quadrics $\E_{\mu_1}$, \dots, $\E_{\mu_{N(k)}}$. {\color{black} The types of the quadrics $\E_{\mu_j}$ are determined by the type of the $k$-billiard path.} The adjoint winding numbers $\left(\hat m_1^{(k)}, \hat m_2^{(k)}, \dots, \hat m_{d-1}^{(k)}\right)$
of the billiard path of length $k$ are the winding numbers of the $n$-periodic trajectory. In particular, the winding number $\hat m_j^{(k)}$,
$j\in\{1, \dots, d-1\}$, represents the number of turns made by the Jacobi coordinate $\lambda_j$ during one period of the $n$-periodic trajectory.
The following inequalities are satisfied between the adjoint winding numbers and the weak winding numbers:
$$
\hat m_j^{(k)}\ge m_j^{(k)}, \quad j\in\{1, \dots, d-1\}.
$$
If the initial path was $k$-periodic trajectory then $m_0=k$ and
$$
\hat m_j^{(k)}= m_j^{(k)}=m_j, \quad j\in\{1, \dots, d-1\}.
$$
\end{corollary}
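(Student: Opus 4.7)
The plan is to show that the corollary is essentially a careful unpacking of Theorem \ref{th:r-resonant} combined with the geometric dictionary between zeros of the extremal polynomials and winding data already established in Proposition \ref{th:winding}.

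First, I would apply Theorem \ref{th:r-resonant} to the given billiard path of $k>d$ reflections off $\E$, producing the generalized Pell equation \eqref{eq:r-resonant} with a polynomial $S_{(k)}$ of degree $N(k)$ whose zeros $\zeta_j$ lie in uniquely determined gap intervals $(c_{2(d+1-\alpha_j)-1},c_{2(d-\alpha_j)})$. Setting $\mu_j=\zeta_j^{-1}$ for each $j\in\{1,\dots,N(k)\}$, the position of $\zeta_j$ in its particular gap interval dictates the type of the confocal quadric $\E_{\mu_j}$; this reads off the types of the extra caustics directly from the adjoint type of the $k$-billiard path defined in Definition \ref{def:k-type}.

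Next, I would reinterpret \eqref{eq:r-resonant} as the classical polynomial Pell equation governing periodicity. Namely, once the $N(k)$ extra caustics $\E_{\mu_j}$ are inserted into the family, the factor $S_{(k)}^2$ in \eqref{eq:r-resonant} exactly matches the contribution these new caustics make to the polynomial on the right of \eqref{eq:pell}, so the generalized Pell equation on $E$ becomes the ordinary polynomial Pell equation for the billiard in $\E$ with caustics $\Q_{\gamma_1},\dots,\Q_{\gamma_{d-1}},\E_{\mu_1},\dots,\E_{\mu_{N(k)}}$. Hence the extended trajectory is periodic with period $n=\deg A_{k+N(k)}=k+N(k)$, with $k$ reflections off $\E$ (corresponding to the $k$ alternance points of $A_{k+N(k)}/S_{(k)}$ inside $E$ other than the $\mu_j^{-1}$) and a single reflection off each $\E_{\mu_j}$ (since $S_{(k)}$ has simple zeros at these points). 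The Double Reflection Theorem justifies that the order of these insertions is irrelevant.

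For the assertion that $(\hat m_1^{(k)},\dots,\hat m_{d-1}^{(k)})$ equals the usual winding tuple of the extended $n$-periodic trajectory, I would invoke Proposition \ref{th:winding} together with Theorem \ref{th:kln}: the integers $N_k$ in Theorem \ref{th:kln} count zeros of $A_{k+N(k)}$ in $(0,c_{2(d-k+1)-1})$, which in turn are precisely the number of times the Jacobi coordinate $\lambda_k$ traverses $[b_{2k-2},b_{2k-1}]$ along one period of the extended trajectory, matching $\hat m_j^{(k)}$ by Definition \ref{def:adjoint}. The inequality $\hat m_j^{(k)}\ge m_j^{(k)}$ then follows because $m_j^{(k)}=[ky_j]$ counts complete turns accumulated in $k$ bounces off $\E$, while the extended trajectory has $n=k+N(k)\ge k$ bounces and so can only accumulate additional complete turns. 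Finally, if the original $k$-bounce path is itself periodic then $N(k)=0$, the Pell equation \eqref{eq:r-resonant} degenerates to \eqref{eq:pell}, and all three winding tuples collapse to the classical winding numbers of Definition \ref{def:windingnumbers}.

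The main obstacle is the second step: making precise the correspondence between the generalized Pell relation on $E$ and the classical polynomial Pell relation on the augmented caustic system, i.e.\ verifying that ``multiplying in'' the extra caustic factors absorbs $S_{(k)}^2$ exactly. This is the rigorous content of Remark \ref{rmk:weakadjoint}, and I would handle it by comparing degrees and divisors on both sides of \eqref{eq:r-resonant} and matching them with the divisor prescribed by \eqref{eq:pell} for the enlarged family of caustics.
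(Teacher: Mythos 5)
Your overall route is the same as the paper's: the corollary is deduced directly from Theorem \ref{th:r-resonant} together with Remark \ref{rmk:weakadjoint}, the generalized Great Poncelet Theorem, and the Double Reflection Theorem, and your steps on the types, the winding numbers, the inequality, and the periodic degenerate case reproduce that reasoning. However, your key second step rests on a misconception that would derail the argument if carried out as written. The quadrics $\E_{\mu_j}$ are \emph{not} additional caustics of the extended trajectory: a line in $\mathbf{E}^d$ is tangent to exactly $d-1$ confocal quadrics, so the caustics of the extended trajectory are still $\Q_{\gamma_1},\dots,\Q_{\gamma_{d-1}}$, and the $\E_{\mu_j}$ are confocal quadrics off which the extended trajectory \emph{reflects} once each. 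Accordingly, $S_{(k)}^2$ cannot be ``absorbed'' into $\hat{\mathcal{P}}_{2d}$ by enlarging the caustic family: caustic and axis parameters enter \eqref{eq:pell} as simple factors inside the product multiplying $B^2$ (Weierstrass points of the spectral curve), whereas the zeros $\zeta_j=\mu_j^{-1}$ of $S_{(k)}$ appear as a perfect square on the right-hand side of \eqref{eq:r-resonant} --- which, as equation \eqref{eq:pell-weak-d} and the divisor conditions of Proposition \ref{prop:divisor-skew-dim3} show, is precisely the algebraic signature of a reflection off a confocal quadric (a simple non-Weierstrass point $P_{\alpha}^{\pm}$ in the divisor), not of a tangency. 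The degree count also fails: dividing \eqref{eq:r-resonant} by $S_{(k)}^2$ yields a Pell-type identity for the \emph{rational} function $A_{k+N(k)}/S_{(k)}$ on the original set $E$, not a polynomial Pell equation \eqref{eq:pell} for an augmented product of linear factors.

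The repair is to target the correct dictionary: compare \eqref{eq:r-resonant} with \eqref{eq:pell-weak-d} (after the substitution $z=1/x$) and with the divisor relation \eqref{eq:skew-periodic-divisor2}, so that each simple zero of $S_{(k)}$ lying in a gap interval produces exactly one reflection off the corresponding quadric $\E_{\mu_j}$, whose type is read off from which gap interval contains $\zeta_j$, i.e.\ from the adjoint type of Definition \ref{def:k-type}. With that correction, your identification of the adjoint winding numbers with the zero counts of $A_{k+N(k)}$ in the intervals $[c_{2d},c_{2j}]$, the inequality $\hat m_j^{(k)}\ge m_j^{(k)}$ obtained by comparing the turns over the $k$-subpath with those over the full period, and the collapse to the classical winding numbers when $N(k)=0$ all go through as the paper intends.
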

}

{\color{black}
\begin{remark}
We note that all the extended periodic trajectories from Corollary \ref{cor:extension} have equal length.
That can be proved as in Theorem \ref{th:length-periodic}.
\end{remark}
}

\subsection{Uniqueness of the caustics}
\label{sec:uniquenesscaustics}

The following Lemma reformulates the Audin Alternative, {\color{black} see Lemma \ref{lemma:audin}.  Let us recall that two quadrics  $\Q_{\alpha}$, $\Q_{\beta}$ are of the same type if there exists $j\in\{1, \dots, d-1\}$
such that both parameters $\alpha$, $\beta$ belong to $(a_{j+1},a_j)$ or they are both smaller than $a_d$. (See also Remark \ref{re:Chasles}, Lemma \ref{lemma:audin}, and Example \ref{ex:typescaustics}.)}

\begin{lemma}\label{lemma:same-type-caustics}
If $\Q_{\alpha}$, $\Q_{\beta}$ are caustics of the same type of a given billiard trajectory within $\E$, then
$\{\alpha^{-1}, \beta^{-1}\}=\{c_{2j+1}, c_{2j}\}$, for some $j$.
\end{lemma}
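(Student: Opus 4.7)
The plan is to combine Audin's Alternative (Lemma \ref{lemma:audin}) with the definition of ``same type.'' First I would reorder the caustics so that $\gamma_1<\gamma_2<\dots<\gamma_{d-1}$ and write, without loss of generality, $\alpha=\gamma_p$, $\beta=\gamma_q$ with $p\le q$. The hypothesis that $\Q_\alpha$ and $\Q_\beta$ are of the same type means they lie in the same component of $\mathbb R\setminus\{a_1,\dots,a_d\}$; hence no $a_i$ lies in the closed interval $[\alpha,\beta]$, and so every $\gamma_r$ with $p\le r\le q$ also belongs to that component. Equivalently, in the sorted sequence $b_1<\dots<b_{2d-1}$, no $a_i$ appears at any position strictly between the position of $\gamma_p$ and the position of $\gamma_q$.

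Next I would exploit Lemma \ref{lemma:audin}, which forces $\gamma_r\in\{b_{2r-1},b_{2r}\}$ for every $r$, to perform a short case analysis on the placements of two consecutive same-type caustics $\gamma_p,\gamma_{p+1}$. If $\gamma_p=b_{2p-1}$, then $b_{2p}$ is not a $\gamma$ (the next candidate $\gamma_{p+1}$ can only sit at $b_{2p+1}$ or $b_{2p+2}$), so $b_{2p}$ would have to be some $a_i$ strictly between $\gamma_p$ and $\gamma_{p+1}$ --- contradicting same-typeness. The same type of argument rules out $\gamma_{p+1}=b_{2p+2}$. The only remaining possibility is $\gamma_p=b_{2p}$ and $\gamma_{p+1}=b_{2p+1}$.

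Then I would show $q=p+1$. If $q\ge p+2$, the forced pair $\gamma_p=b_{2p}$, $\gamma_{p+1}=b_{2p+1}$, together with $\gamma_{p+2}\in\{b_{2p+3},b_{2p+4}\}$, leaves $b_{2p+2}$ strictly between $\gamma_{p+1}$ and $\gamma_{p+2}$. Since the constraint $\gamma_r\in\{b_{2r-1},b_{2r}\}$ forbids any $\gamma_r$ from sitting at position $b_{2p+2}$, that slot must hold some $a_i$, again giving an $a_i$ strictly between same-type caustics, a contradiction.

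Finally, translating the identities $\alpha=b_{2p}$, $\beta=b_{2p+1}$ through $c_k=1/b_k$ produces $\{\alpha^{-1},\beta^{-1}\}=\{c_{2p},c_{2p+1}\}$, which is the desired conclusion with $j=p$. The only delicate part is the case analysis in the second paragraph; everything else is bookkeeping on the interleaving of the $a$'s and $\gamma$'s inside the $b$-sequence.
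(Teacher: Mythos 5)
Your proof is correct and follows essentially the same route as the paper's: Audin's Alternative pins each $\gamma_r$ to $\{b_{2r-1},b_{2r}\}$, and the absence of any $a_i$ between same-type caustics forces the two parameters to occupy adjacent slots $b_{2j},b_{2j+1}$. You simply spell out in more detail the case analysis (and the exclusion of a third intervening caustic) that the paper compresses into the assertion that $\alpha$ and $\beta$ must be consecutive in the $b$-sequence.
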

\begin{proof}
According to Lemma \ref{lemma:audin}, exactly one {\color{black} element} of each pair $\{b_{2j-1},b_{2j}\}$ is a parameter of a caustic of the trajectory.
Since $\Q_{\alpha}$, $\Q_{\beta}$ are of the same type, $\alpha$ and $\beta$ must be consecutive in the sequence $b_1,\dots,b_{2n-1}$, so
$\{\alpha,\beta\}=\{b_{2j},b_{2j+1}\}$ for some $j$.
\end{proof}

\begin{theorem}\label{th:signature-caustics}
Let the following be given: an ellipsoid $\E$ in $d$-dimensional space, integers $k_0$, $r$ such that $k_0\ge d$, $0\le r\le d-1$, a collection $\Gamma_1$ of $d-r-1$ quadrics confocal with $\E$, a collection of adjoint winding numbers, and {\color{black} an adjoint type}.
Then, there exists at most one set $\Gamma_2$ of $r$ quadrics confocal with $\E$ which are of the given $r$ types: the billiard trajectories within $\E$, such that their caustics are exactly the quadrics from the set $\Gamma_1\cup\Gamma_2$, have the adjoint resonance equal $r$, the weak period $k_0$, the prescribed adjoint winding numbers, {\color{black} and the adjoint $k_0$-type.}
\end{theorem}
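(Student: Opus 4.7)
My plan is to argue by uniqueness: assume that two sets $\Gamma_2,\Gamma_2^*$ both satisfy the hypotheses, and deduce $\Gamma_2=\Gamma_2^*$ by a direct application of Theorem \ref{th:pell-unique}. To each candidate set I associate the corresponding system of $d$ closed intervals $\I_d,\I_d^*\subset\mathbf{R}$ formed from the reciprocals of the sorted values $b_1<\dots<b_{2d-1}$ (axes of $\E$ together with caustic parameters) and the point $c_{2d}=0$. Theorem \ref{th:r-resonant}, applied at $k=k_0$, produces on each system a generalized Pell equation of the shape \eqref{eq:genPellA}, and the $S$-polynomials $S_{(k_0)},\tilde S_{(k_0)}$ share the common degree $g_m=N(k_0)=d-1-r$; the equality of $N(k_0)$ on both systems is forced by the prescribed adjoint resonance $r$ together with the prescribed adjoint $k_0$-type.

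The structural input is Lemma \ref{lemma:same-type-caustics}: the Audin Alternative says that the sorted caustic parameter $\gamma_j$ must equal either $b_{2j-1}$ or $b_{2j}$, so each closed interval $[c_{2j},c_{2j-1}]$ with $j<d$ has exactly one endpoint coming from an axis reciprocal and one coming from a caustic reciprocal, while the last interval $[0,c_{2d-1}]=[0,1/a_1]$ carries no caustic endpoint. Since the axes of $\E$ and the $d-r-1$ caustic parameters from $\Gamma_1$ are fixed, this immediately yields hypothesis (i) of Theorem \ref{th:pell-unique}: at least $(d-r-1)+1=d-r=g_m+1$ closed intervals of $\I_d$ coincide with the corresponding intervals of $\I_d^*$. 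The same observation gives hypothesis (ii), because the $r$ remaining intervals (whose caustic endpoint is from $\Gamma_2$ or $\Gamma_2^*$) still share their axis endpoint. Hypothesis (v) is just the statement that the prescribed adjoint $k_0$-type fixes the gap intervals in which $S_{(k_0)}$ and $\tilde S_{(k_0)}$ place their simple zeros.

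The remaining two hypotheses I would verify as follows. For (iv), the product $\prod_{j=1}^{2d}(z-c_j)$ is strictly negative on $E$ (an odd count of sign changes), so $S^2=A^2-T B^2$ forces $|A/S|\le 1$ with equality at the zeros of $B$ and at the endpoints $c_j$; the uniform norm is therefore $1$ for both extremal rationals. For (iii), Proposition \ref{th:winding} shows that the prescribed adjoint winding numbers $(\hat m_1^{(k_0)},\dots,\hat m_{d-1}^{(k_0)})$ fix, on each closed interval, the number of zeros of the numerator polynomial; by the alternation property the number of zeros of the denominator polynomial on each interval is determined as well, and together with Lemma \ref{lemma:alternance} the total count of extremal points of $A/S$ on each corresponding interval is the same for the two systems. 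With all five hypotheses in place, Theorem \ref{th:pell-unique} yields $\I_d=\I_d^*$, and reading off the endpoints that originate from caustics gives $\Gamma_2=\Gamma_2^*$.

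The main obstacle I expect is the bookkeeping for hypothesis (iii): translating the prescribed adjoint winding numbers carefully into equal counts of extremal points of $A/S$ on each of the $r$ variable intervals, and checking that this works in the presence of possibly several gap intervals occupied by zeros of $S$. The relation $N(k_0)=d-1-r$, which is needed to match the degree of the $S$-polynomial with the number of fixed intervals, also deserves a clean argument combining Definition \ref{def:adjoint} with the prescription of the adjoint $k_0$-type.
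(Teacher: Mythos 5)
Your proposal is correct and follows essentially the same route as the paper: assume two candidate collections $\Gamma_2$, $\Gamma_2^*$, pass to the associated interval systems and the generalized Pell equations of Theorem \ref{th:r-resonant} at $k=k_0$, verify hypotheses (i)--(v) of Theorem \ref{th:pell-unique} (with (i) and (ii) coming from the fixed data together with Lemma \ref{lemma:same-type-caustics}, and (iii)--(v) from the prescribed adjoint winding numbers and adjoint type), and conclude $\I_d=\I_d^*$. Your verifications of (iii) and (iv) are in fact more explicit than the paper's, which simply asserts that these follow from the given adjoint data.
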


Observe that if one billiard trajectory within $\E$ has $r$ as its adjoint resonance with the weak period $n$, and with the prescribed adjoint winding numbers {\color{black} and the adjoint $k_0$-type}, then all the trajectories within $\E$ with the same caustics share the all listed properties.

\smallskip

\begin{proof}
{\color{black} Suppose the contrary. Let there be at leat two distinct collections $\Gamma_2$ and $\Gamma_2^*$ of $r$ quadrics satisfying the assumptions of the theorem. 
Denote by $c_j$ and $c_j^*$, $j\in\{1,\dots, 2d\}$, the endpoints of the corresponding intervals for $\Gamma_1\cup\Gamma_2$ and $\Gamma_1\cup\Gamma_2^*$ respectively. 
Consider the polynomials $A_{k_0+N(k_0)}, S_{(k_0)}$ from formula \eqref{eq:r-resonant} corresponding to $\Gamma_1\cup\Gamma_2$. Analogous polynomials corresponding to $\Gamma_1\cup\Gamma_2^*$ will be denoted $A_{k_0+N(k_0)}^*, S_{(k_0)}^*$.

The assumption (i) of Theorem \ref{th:pell-unique} is satisfied since $[c_{2d}, c_{2d-1}]=[0, a_d^{-1}]$ and $\Gamma_1\cup\Gamma_2$ and $\Gamma_1\cup\Gamma_2^*$ have $d-r-1$ quadrics in common. {\color{black} To see that the assumption (ii) of Theorem \ref{th:pell-unique} is satisfied, first observe that due to  Lemma \ref{lemma:same-type-caustics} there are no {\color{black}closed intervals} $[c_{2j}, c_{2j-1}]$ with both endpoints being reciprocal to the caustics.  Then from the assumption that corresponding quadrics from $\Gamma_1\cup\Gamma_2$ and $\Gamma_1\cup\Gamma_2^*$ are of the same types, it follows that the coinciding endpoints of the corresponding intervals are both left endpoints or both right endpoints. In other words, it cannot happen that there is $j$ such that $j$-th intervals $[c_{2j}, c_{2j-1}]$ and $[c^*_{2j}, c^*_{2j-1}]$ are such that $c^*_{2j-1}$ is a reciprocal of a caustic and $c_{2j-1}$ is a reciprocal of a pencil parameter.}
{\color{black} The assumptions (iii), (iv), and (v)  of Theorem \ref{th:pell-unique} are also satisfied. This follows from the fact that the adjoint winding numbers and the adjoint $k_0$-type are given}. Thus, we come to contradiction with Theorem \ref{th:pell-unique}, which completes the proof of this statement.}
\end{proof}

\section{Weak periodic billiard trajectories and hyperelliptic curves}\label{sec:sweak}
In Section \ref{sec:rresonant} we investigated
{\color{black}a} classification of non-periodic ellipsoidal
billiard trajectories, based on notions of
{\color{black}
resonance (see Definition \ref{def:r-resonant}) and adjoint resonance (see Definition \ref{def:adjoint}), which depend on dynamical properties of trajectories.
}
Yet another natural classification
of non-periodic ellipsoidal billiard trajectories was introduced by the authors in \cite{DragRadn2008}. This is the concept of weak periodicity, which relays on the geometry of lines tangent to the same set
of quadrics from a given confocal family,
{\color{black}
and will be introduced later in this section, in Definition \ref{def:s-skew-d}.
}

{\color{black}
\begin{definition}\label{def:isospectral-curve}
\emph{The isospectral curve} corresponding to the billiard motion within ellipsoid $\E$ \eqref{eq:ellipsoidd} with the caustics $\Q_{\gamma_1}$, \dots, $\Q_{\gamma_{d-1}}$ from the confocal family \eqref{eq:confocald} is the following hyperelliptic curve:
\begin{equation}\label{eq:hcurve}
\Curve\ :\
y^2=(a_1-x)\dots(a_d-x)(\gamma_1-x)\dots(\gamma_{d-1}-x).
\end{equation}
\end{definition}

{\color{black}We note that the term \emph{isospectral} is used because that curve appeared in the isospectral deformations technique \cite{MV1991}.}

\begin{remark}\label{rem:isospectral}
Note that the equation \eqref{eq:hcurve} is given in $\mathbb{C}^2$ and determines the affine part of the curve $\Curve$.
That affine part is compactified by a single point at the line at the infinity in $\mathbb{CP}^2$.
Such a compactified curve $\Curve'$ will have a singularity at the infinity, and in this paper we will assume that $\Curve$ is obtained from $\Curve'$ by the desingularization.
\end{remark}

We will denote by $P_{b}:=(b,0)$, $P_{\infty}:=(\infty,\infty)$ the Weierstrass points of $\Curve$,}
$$
b\in\{a_1,\dots,a_d,\gamma_1,\dots,\gamma_{d-1}\}.
$$
For a divisor $D$ on the curve, we denote:
\begin{gather*}
\LL(D)=\left\{ f \text{ -- meromorphic function on } \Curve \mid (f)+D\ge 0 \right\},
 \\
\Omega(D)=\left\{ \omega \text{ -- meromorphic differential on } \Curve \mid (\omega)\ge D \right\}.
\end{gather*}
The Riemann-Roch theorem states that
$$
\dim\LL(D)=\deg D- g+\dim\Omega(D)+1,
$$
where $g$ is the genus of the curve.
In our case, $g=d-1$.

We are going now to review the periodicity condition in three-dimensional space.

\subsection{Periodic trajectories in three dimensions}\label{sec:periodic3}
Given an ellipsoid:
\begin{equation}\label{eq:ellipsoid3}
\E\ :\ \frac{x_1^2}{a_1}+\frac{x_2^2}{a_2}+\frac{x_3^2}{a_3}=1,
\quad
a_1>a_2>a_3>0,
\end{equation}
and the confocal family:
\begin{equation}\label{eq:confocal3}
\Q_{\lambda}\ :\ \frac{x_1^2}{a_1-\lambda}+\frac{x_2^2}{a_2-\lambda}+\frac{x_3^2}{a_3-\lambda}=1,
\end{equation}
for which $\Q_{0}=\E$.
According to Chasles' theorem, each trajectory of the billiard within $\E$ has two caustics from the confocal family \refeq{eq:confocal3}.
{\color{black} (See Remark \ref{re:Chasles}, Lemma \ref{lemma:audin}, and Example \ref{ex:typescaustics}.)}
The isospectral curve for trajectories with caustics $\Q_{\gamma_1}$ and $\Q_{\gamma_2}$ is:
\begin{equation}\label{eq:curve3}
\Curve\ :\ y^2=(a_1-x)(a_2-x)(a_3-x)(\gamma_1-x)(\gamma_2-x).
\end{equation}
We denote:
$$
\Pol(x)=(a_1-x)(a_2-x)(a_3-x)(\gamma_1-x)(\gamma_2-x).
$$
For $\mu\in\R$ such that $\Pol(\mu)\ge0$, we denote by $P_{\mu}=P_{\mu}^+$ and $P_{\mu}^-$ the points on $\Curve$ with coordinates $(\mu,+\sqrt{\Pol(\mu)})$ and $(\mu,-\sqrt{\Pol(\mu)})$ respectively.
{\color{black}
We note that, for any $\mu$, the following divisor equivalence holds:
\begin{equation}\label{eq:divisor2P}
P_{\mu}^+ +P_{\mu}^-\sim 2P_{\infty},
\end{equation}
since those are zeros and poles divisors of the function $x-\mu$ on the curve $\Curve$.
}

As previously, we will denote $\{b_1,b_2,b_3,b_4,b_5\}=\{a_1,a_2,a_3,\gamma_1,\gamma_2\}$, with
$b_1<b_2<b_3<b_4<b_5$.
{\color{black}
For the Weierstrass points $P_{b}$, $b\in\{b_1,\dots,b_5\}$, the relation \eqref{eq:divisor2P} becomes:
\begin{equation}\label{eq:divisor-weierstrass}
2P_b\sim 2P_{\infty}.
\end{equation}

In the next Theorem \ref{th:divisor-conditions}, Lemma \ref{lemma:cayley} and Proposition \ref{prop:cayley-double-caustic},
we formulate particular cases from \cite{DragRadn2018},
 for $d=3$.

We will use the symbol $\sim$ to denote the linear equivalence of divisors on the curve $\Curve$.
}

\begin{theorem}[Algebro-geometric conditions for periodicity]
	\label{th:divisor-conditions}
	Consider a billiard trajectory within ellipsoid $\E$,
	with non-degenerate distinct caustics $\Q_{\gamma_1}$ and $\Q_{\gamma_2}$.
	Then the trajectory is $n$-periodic if and only if one of the following is satisfied:
	\begin{itemize}
		\item $n$ is even and $nP_0\sim nP_{\infty}$;
		\item $n$ is even, both caustics are $1$-sheeted hyperboloids and $nP_0\sim(n-2)P_{\infty}+P_{\gamma_1}+P_{\gamma_2}$;
		\item $n$ is odd, $\Q_{\gamma_1}$ is an ellipsoid, and $nP_0\sim(n-1)P_{\infty}+P_{\gamma_1}$.
	\end{itemize}
\end{theorem}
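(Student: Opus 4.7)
The plan is to apply the Abel--Jacobi linearisation of integrable billiard dynamics to the curve $\Curve$ from \eqref{eq:curve3} and to translate the $n$-periodicity condition into a divisor equivalence on $\Jac(\Curve)$.

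First, I would attach to every segment of the trajectory a divisor $D$ on $\Curve$ built from the Jacobi coordinates $(\lambda_1,\lambda_2,\lambda_3)$ of a point of the segment, with the choice of sheet above each $\lambda_j$ determined by the sign of the corresponding $\dot\lambda_j$. The Jacobi relations
\[
\sum_{j=1}^{3}\frac{d\lambda_j}{\sqrt{\Pol(\lambda_j)}}=0,\qquad \sum_{j=1}^{3}\frac{\lambda_j\,d\lambda_j}{\sqrt{\Pol(\lambda_j)}}=0
\]
recalled in the proof of Theorem \ref{th:length-periodic} are pullbacks of a basis of holomorphic differentials on $\Curve$, so by Abel's theorem the billiard flow is linearised on $\Jac(\Curve)$. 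The key local computation is the variation of the class $[D]$ at a single reflection off $\E=\Q_0$: because $\lambda_1$ touches $0$ at the impact and only the normal component of the velocity is reversed, a careful book-keeping of the sheet of $P_{\lambda_1}$ before and after the bounce yields a shift by $P_0-P_\infty$. Summing over $n$ reflections, the trajectory closes \emph{in Jacobi coordinates} precisely when $n(P_0-P_\infty)\sim 0$, i.e.\ $nP_0\sim nP_\infty$, which is the first bullet.

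The remaining two cases arise when Jacobi-coordinate closure lifts to a genuine closure in $\mathbf{E}^3$. The lift is governed by involutions $x_k\mapsto -x_k$ together with the hyperelliptic involution; on $\Jac(\Curve)$ these act as translations by two-torsion classes $P_b-P_\infty$, the relation $2P_b\sim 2P_\infty$ from \eqref{eq:divisor-weierstrass} being the underlying mechanism. The real component of the level variety on which the trajectory actually lives is pinned down by the parity pattern of the winding numbers $(m_0,m_1,m_2)$ of Definition \ref{def:windingnumbers}: each impact with $\E$ flips the parity of $m_0$, each crossing of the hyperplane $x_k=0$ flips the parity of the $m_j$ whose interval contains $a_k$, and each tangency with $\Q_{\gamma_i}$ flips the parity of the $m_j$ whose interval contains $\gamma_i$. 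Combined with Lemma \ref{lemma:audin}, these parity constraints single out exactly the two admissible relaxations: adding $2P_\infty-P_{\gamma_1}-P_{\gamma_2}$ when both caustics are one-sheeted hyperboloids (forcing $n$ even) and adding $P_\infty-P_{\gamma_1}$ when $\Q_{\gamma_1}$ is an ellipsoid (forcing $n$ odd). This yields the second and third bullets. The converse, that each listed equivalence gives a periodic trajectory, follows by running the linearised flow backwards on the selected component.

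The main obstacle is the shift book-keeping: one has to show that a single reflection contributes $P_0-P_\infty$ rather than $2(P_0-P_\infty)$ on the symmetric divisor. The cleanest route is through the Baker--Akhiezer function on $\Curve$, whose essential singularity at $P_\infty$ encodes the translation vector directly, together with a comparison of its dynamics with the billiard map; this is the construction carried out for general $d$ in \cite{DragRadn2018}, and the statement above is obtained by specialising that computation to $d=3$ and reading the resulting Weierstrass-point contributions case-by-case against Lemma \ref{lemma:audin}.
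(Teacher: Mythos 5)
The paper itself offers no proof of this theorem: immediately before its statement it declares that Theorem \ref{th:divisor-conditions}, Lemma \ref{lemma:cayley} and Proposition \ref{prop:cayley-double-caustic} ``formulate particular cases from \cite{DragRadn2018}, for $d=3$,'' so the official justification is precisely the citation with which you close. Your reduction to that reference therefore matches the paper's route, and your sketch of the mechanism is consistent with how the paper actually carries out the analogous derivation for weak periodicity in Proposition \ref{prop:divisor-skew-dim3}: integrate the two Jacobi differentials over one closed circuit, invoke Abel's theorem, and reduce the resulting combination of Weierstrass points using $2P_b\sim 2P_{\infty}$ together with the constraint that a closed polygon crosses each coordinate hyperplane an even number of times. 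Two details of your sketch are imprecise, though neither is fatal since the real computation is deferred to \cite{DragRadn2018}. First, the elementary contribution produced by the integration is $P_0-P_{b_1}$ per bounce, accompanied by turning-point terms $n_2(P_{b_2}-P_{b_3})+n_3(P_{b_4}-P_{b_5})$, not a clean shift by $P_0-P_{\infty}$; the point at infinity enters only after the parity reduction via \eqref{eq:divisor-weierstrass}. Second, $nP_0\sim nP_{\infty}$ is not ``precisely'' the condition for closure in Jacobi coordinates: closure in elliptic coordinates is a strictly weaker condition admitting arbitrary two-torsion corrections $\varepsilon_i(P_{b_{2i}}-P_{b_{2i+1}})$, and is treated separately in Lemma \ref{lemma:divisor-skew3-elliptic}; all three bullets of the theorem are cases of genuine closure in $\mathbf{E}^3$, distinguished by the caustic types and the parity of $n$ rather than forming a hierarchy with the first bullet as the ``Jacobi'' case.
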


{\color{black}
\begin{remark}
Note that in the first case of Theorem \ref{th:divisor-conditions}, there is no constraint on the type of the caustics.
That means any pair of types of confocal quadrics satisfying the Audin Alternative, Lemma \ref{lemma:audin}, may appear there.
All possible types of caustics for the $3$-dimensional case are listed in Example \ref{ex:typescaustics}.
\end{remark}	
}

\begin{lemma}\label{lemma:cayley}
	Consider a non-singular curve $\Curve$ \refeq{eq:curve3}.
	Then:
	\begin{itemize}
		\item $nP_{0}\sim nP_{\infty}$ for $n$ even if and only if $n\ge6$ and
		$$
		\rank\left(
		\begin{array}{llll}
		A_4 & A_5 &\dots & A_{m+1}\\
		A_5 & A_6 &\dots & A_{m+2}\\
		\dots\\
		A_{m+2}& A_{m+3}&\dots & A_{2m-1}
		\end{array}
		\right)
		<m-2,
		\quad n=2m,
		$$
		with $\sqrt{\Pol(x)}=A_0+A_1x+A_2x^2+\dots$;
		
		\item
		$nP_0\sim (n-2)P_{\infty}+P_{\gamma_1}+P_{\gamma_2}$ for $n$ even if and only if $n\ge4$ and
		$$
		\rank\left(
		\begin{array}{llll}
		B_2 & B_3 &\dots & B_{m}\\
		B_3 & B_4 &\dots & B_{m+1}\\
		\dots\\
		B_{m+1}& B_{m+2}&\dots & B_{2m-1}
		\end{array}
		\right)
		<m-1,
		\quad n=2m,
		$$
		with $\dfrac{\sqrt{\Pol(x)}}{(x-\gamma_1)(x-\gamma_2)}=B_0+B_1x+B_2x^2+\dots$;
		
		\item
		$nP_0\sim (n-1)P_{\infty}+P_{\gamma_1}$ for $n$ odd if and only if $n\ge5$ and
		$$
		\rank\left(
		\begin{array}{llll}
		C_3 & C_4 &\dots & C_{m+1}\\
		C_4 & C_5 &\dots & C_{m+2}\\
		\dots\\
		C_{m+2}& C_{m+3}&\dots & C_{2m}
		\end{array}
		\right)
		<m-1,
		\quad n=2m+1,
		$$
		with $\dfrac{\sqrt{\Pol(x)}}{x-\gamma_1}=C_0+C_1x+C_2x^2+\dots$.
		
	\end{itemize}
\end{lemma}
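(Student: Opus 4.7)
The plan is to translate each of the three divisor equivalences from Theorem \ref{th:divisor-conditions} into the existence of a nonzero meromorphic function on $\Curve$ with a prescribed zero of order $n$ at $P_0$ and prescribed allowable poles, and then to convert this existence into a rank condition on the Taylor coefficients of an appropriate function at $P_0$. Throughout, $x$ serves as a local parameter at $P_0$; this is legitimate because, under the non-singularity assumption on $\Curve$, the point $0$ is not a Weierstrass point.

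For the first case, $2mP_0 \sim 2mP_\infty$ is equivalent to the existence of a nonzero $f \in \LL(2mP_\infty - 2mP_0)$. Since $x$ has a double pole and $y$ has a pole of order $5$ at $P_\infty$, the set $\{1, x, \dots, x^m\} \cup \{y, xy, \dots, x^{m-3}y\}$ is a basis of $\LL(2mP_\infty)$ of dimension $2m-1$, in agreement with Riemann--Roch and requiring $m \ge 3$. Writing $f = \sum_{k=0}^m c_k x^k + \bigl(\sum_{\ell=0}^{m-3} d_\ell x^\ell\bigr) y$, expanding near $P_0$ via $y = \sum_j A_j x^j$, and demanding the vanishing of the Taylor coefficients of $f$ up to order $2m-1$, one obtains $2m$ linear equations in the $2m-1$ unknowns $c_k, d_\ell$. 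The first $m+1$ equations (orders $0,\dots,m$) can always be satisfied by choosing the $c_k$, leaving the $m-1$ homogeneous relations
\begin{equation*}
\sum_{\ell=0}^{m-3} d_\ell A_{k-\ell} = 0, \quad k = m+1, \dots, 2m-1,
\end{equation*}
in the $m-2$ unknowns $d_\ell$. After reversing the order of the columns, the matrix of coefficients is exactly the $(m-1) \times (m-2)$ matrix in the lemma, and a nontrivial solution exists precisely when its rank is strictly less than $m-2$.

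The remaining two cases follow analogously, the only adjustment being in the choice of bases for the relevant Riemann--Roch spaces so as to accommodate the allowable poles at the Weierstrass points $P_{\gamma_1}$ and $P_{\gamma_2}$. In case two, any function in $\LL((2m-2)P_\infty + P_{\gamma_1} + P_{\gamma_2})$ can be written as $\tilde P(x) + Q(x) y/((x-\gamma_1)(x-\gamma_2))$ with $\deg \tilde P \le m-1$ and $\deg Q \le m-2$; the presence of $y$ in the second summand is essential, because $(x-\gamma_i)$ vanishes to order $2$ while $y$ vanishes to order $1$ at the Weierstrass point $P_{\gamma_i}$, so that the resulting pole there is simple. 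Expanding at $P_0$ via $\sqrt{\Pol(x)}/((x-\gamma_1)(x-\gamma_2)) = \sum_j B_j x^j$ and performing the same reduction yields the claimed rank condition on the $B_j$. Case three is handled identically using representatives $\tilde P(x) + Q(x) y/(x-\gamma_1)$ with $\deg \tilde P \le m$ and $\deg Q \le m-2$, together with the expansion $\sqrt{\Pol(x)}/(x-\gamma_1) = \sum_j C_j x^j$.

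The main delicate point is to identify the bases of each Riemann--Roch space correctly, in particular recognizing that at the Weierstrass points the appropriate local parameter is $y$ rather than $x - \gamma_i$; this dictates when $y$ must appear in the numerator of a basis element and it also pins down the lower bounds $n \ge 6, 4, 5$ in the three cases, which are precisely the thresholds at which the rank matrix becomes non-trivial (i.e., has at least one column). The general $d$-dimensional analogue of this lemma is established in \cite{DragRadn2018}, and the argument above is its specialization to $d = 3$.
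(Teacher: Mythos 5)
Your proof is correct and follows essentially the same route as the source: the paper states Lemma \ref{lemma:cayley} without proof as a specialization of \cite{DragRadn2018}, and both that reference and the analogous arguments carried out explicitly in this paper (e.g.\ the proofs of Theorem \ref{th:polynomial-skew-dim3} and Corollary \ref{cor:cayley-skew-dim3}) proceed exactly as you do, by exhibiting the basis of the relevant Riemann--Roch space, Taylor-expanding at $P_0$, and reading off the rank condition from the lower-right block of the resulting linear system. Your dimension counts, the identification of the bases (including the role of $y$ as local parameter at the Weierstrass points), and the thresholds $n\ge 6,4,5$ all check out.
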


Next, we will consider the case when the segments of billiard trajectories are placed along generatrices of a confocal $1$-sheeted hyperboloid $\Q_{\gamma_1}$.
In that case, $\Q_{\gamma_1}$ is the unique caustic of that trajectory, which happens when two caustics, both $1$-sheeted hyperboloids ``collide'' with each other, so $\gamma_1=\gamma_2$.
Hyperboloid $\Q_{\gamma_1}$ is then a double caustic of such trajectories.

\begin{proposition}\label{prop:cayley-double-caustic}
	A billiard trajectory within $\E$ with segments on $1$-sheeted hyperboloid $\Q_{\gamma_1}$ is $n$-periodic if and only if $n$ is even and either
	\begin{itemize}
		\item
		$$
		\rank\left(
		\begin{array}{llll}
		A_4 & A_5 &\dots & A_{m+1}\\
		A_5 & A_6 &\dots & A_{m+2}\\
		\dots\\
		A_{m+2}& A_{m+3}&\dots & A_{2m-1}
		\end{array}
		\right)
		<m-2,
		\quad n=2m\ge6,
		$$
		with
		$(\gamma_1-x)\sqrt{(a_1-x)(a_2-x)(a_3-x)}=A_0+A_1x+A_2x^2+\dots$; or
		
		\item
		$$
		\rank\left(
		\begin{array}{llll}
		B_2 & B_3 &\dots & B_{m}\\
		B_3 & B_4 &\dots & B_{m+1}\\
		\dots\\
		B_{m+1}& B_{m+2}&\dots & B_{2m-1}
		\end{array}
		\right)
		<m-1,
		\quad n=2m\ge4,
		$$
		with $\dfrac{\sqrt{(a_1-x)(a_2-x)(a_3-x)}}{\gamma_1-x}=B_0+B_1x+B_2x^2+\dots$.
	\end{itemize}
\end{proposition}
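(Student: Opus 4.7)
The plan is to derive the stated conditions by specializing the periodicity criteria from Theorem \ref{th:divisor-conditions} and Lemma \ref{lemma:cayley} to the degenerate case in which the two caustics $\Q_{\gamma_1}$ and $\Q_{\gamma_2}$ coincide. Geometrically, this is precisely the situation when the segments of a billiard trajectory lie along the generatrices of the $1$-sheeted hyperboloid $\Q_{\gamma_1}$, so $\gamma_1=\gamma_2$ and $\Pol(x)=(a_1-x)(a_2-x)(a_3-x)(\gamma_1-x)^2$. In this limit the isospectral curve \refeq{eq:curve3} acquires a node at $x=\gamma_1$, and its desingularization is the elliptic curve $\widetilde \Curve:\ y^2=(a_1-x)(a_2-x)(a_3-x)$.

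First I would observe that, since $\Q_{\gamma_1}$ is a $1$-sheeted hyperboloid (not an ellipsoid), the third case of Theorem \ref{th:divisor-conditions} (the one requiring odd $n$) cannot arise, which is why only even $n$ appears in the proposition. This leaves the two divisor conditions $nP_0\sim nP_\infty$ and $nP_0\sim (n-2)P_\infty+P_{\gamma_1}+P_{\gamma_2}$, both with $n$ even.

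Next I would pass each of these conditions through Lemma \ref{lemma:cayley} under the specialization $\gamma_2=\gamma_1$, tracking the Taylor expansions that enter the rank criteria. In the first case, $\sqrt{\Pol(x)}=\pm(\gamma_1-x)\sqrt{(a_1-x)(a_2-x)(a_3-x)}$, so the coefficients $A_k$ in Lemma \ref{lemma:cayley} become, up to an overall sign that does not affect the rank of a Hankel matrix, the coefficients of $(\gamma_1-x)\sqrt{(a_1-x)(a_2-x)(a_3-x)}$; this yields the first rank condition of the proposition. In the second case, $\dfrac{\sqrt{\Pol(x)}}{(x-\gamma_1)(x-\gamma_2)}=\pm\dfrac{\sqrt{(a_1-x)(a_2-x)(a_3-x)}}{\gamma_1-x}$, which produces the second rank condition. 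The lower bounds $n\ge 6$ and $n\ge 4$ are inherited directly from Lemma \ref{lemma:cayley}.

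The main obstacle will be to justify rigorously that the divisor-theoretic and Riemann--Roch arguments underlying Theorem \ref{th:divisor-conditions} and Lemma \ref{lemma:cayley}, which were formulated for a non-singular $\Curve$, remain valid after this degeneration. The clean way to do this is to re-run the Riemann--Roch computation directly on the elliptic curve $\widetilde\Curve$, applied to divisors of the form $nP_0-nP_\infty$ and $nP_0-(n-2)P_\infty-P_{\gamma_1}^+-P_{\gamma_1}^-$, where $P_{\gamma_1}^\pm$ are the two preimages on $\widetilde\Curve$ of the node of $\Curve$. Some care is needed here: the divisor $P_{\gamma_1}+P_{\gamma_2}$ must be interpreted as $P_{\gamma_1}^++P_{\gamma_1}^-\sim 2P_\infty$ (the total preimage of the node), not as $2P_{\gamma_1}^\pm$, for otherwise the two conditions of the proposition would collapse to one. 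Once the two divisor conditions are lifted to $\widetilde\Curve$ in this form, the standard expansion of the relevant meromorphic function at $P_0$ produces the two Hankel matrices in the statement, completing the proof.
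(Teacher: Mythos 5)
The paper itself supplies no proof of this proposition: it is quoted, together with Theorem \ref{th:divisor-conditions} and Lemma \ref{lemma:cayley}, as a particular case of \cite{DragRadn2018} for $d=3$, so there is no in-text argument to compare yours against. On the formal level your specialization is correct: setting $\gamma_2=\gamma_1$ turns $\sqrt{\Pol(x)}$ into $(\gamma_1-x)\sqrt{(a_1-x)(a_2-x)(a_3-x)}$ and $\sqrt{\Pol(x)}/\bigl((x-\gamma_1)(x-\gamma_2)\bigr)$ into $\sqrt{(a_1-x)(a_2-x)(a_3-x)}/(\gamma_1-x)$, which reproduces exactly the two Hankel matrices and the bounds $n\ge 6$, $n\ge 4$; and the exclusion of odd $n$ (no ellipsoidal caustic is available) is the right dichotomy, although invoking Theorem \ref{th:divisor-conditions} for it is circular since that theorem assumes distinct non-degenerate caustics --- a direct geometric argument (consecutive segments lie on generatrices from the two alternating rulings of $\Q_{\gamma_1}$, so a closed polygon needs an even number of them) is cleaner.

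The genuine gap is in the step you flag as the ``main obstacle'' and then resolve incorrectly. When $\gamma_1=\gamma_2$ the curve $\Curve$ acquires a node, and on its normalization $\widetilde\Curve:\ \tilde y^2=(a_1-x)(a_2-x)(a_3-x)$ one has $P_{\gamma_1}^++P_{\gamma_1}^-\sim 2P_\infty$, this being the divisor of the function $x-\gamma_1$. Consequently the two divisor relations you propose to verify on $\widetilde\Curve$, namely $nP_0\sim nP_\infty$ and $nP_0\sim (n-2)P_\infty+P_{\gamma_1}^++P_{\gamma_1}^-$, are \emph{the same} linear-equivalence condition --- your parenthetical claim that reading $P_{\gamma_1}+P_{\gamma_2}$ as the total preimage of the node prevents the collapse is exactly backwards. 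Moreover, a single linear equivalence on the genus-one curve $\widetilde\Curve$ is one scalar condition, whereas each rank condition in the proposition (rank of an $(m-1)\times(m-2)$, respectively $m\times(m-1)$, Hankel matrix dropping below its generic value) is two scalar conditions. The correct framework is the generalized Jacobian of the nodal curve: one must work not with all of $\LL(nP_\infty)$ on $\widetilde\Curve$ but with the subspaces of functions of the form $p(x)+(\gamma_1-x)\tilde y\,q(x)$ (which take \emph{equal} values at $P_{\gamma_1}^{\pm}$, giving the first case) and of the form $(\gamma_1-x)p(x)+\tilde y\,q(x)$ divided by $\gamma_1-x$ (whose polar parts at $P_{\gamma_1}^{\pm}$ are \emph{opposite}, giving the second case). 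It is precisely this extra matching condition at the two preimages of the node that accounts for the additional vanishing minor and keeps the two cases distinct; without it your argument proves a strictly weaker, single determinantal condition and cannot separate the two bullets of the statement.
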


\subsection{$0$-weak periodic trajectories in dimension 3}\label{sec:weak3}

A trajectory $T$ has \emph{$0$-weak period $n$} if
the lines containing
 its first and the $(n+1)$-st segment intersect each other.
In that case, these two segments satisfy the reflection law off one of the quadrics of the confocal family, say $\Q_{\alpha}$,
which is one of three quadrics from the confocal family \eqref{eq:confocal3} containing the intersection point of the segments.

Moreover, any billiard trajectory within $\E$ sharing the same caustics as $T$ will also be of {\color{black} $0$-weak} period $n$, and its first and $(n+1)$-st segment satisfy the billiard reflection law off the same confocal quadric $\Q_{\alpha}$.
In particular, that will also mean that if $\ell$ is any segment of that trajectory and $\ell_{(n+1)}$ a segment obtained after $n$ reflections off the boundary, than they also intersect each other and satisfy the reflection law off $\Q_{\alpha}$.

\begin{lemma}\label{lemma:geom-type}
	If {\color{black} in the above conditions}:
	\begin{itemize}
		\item $\Q_{\alpha}$ is a hyperboloid; or
		\item $\Q_{\alpha}$ is an ellipsoid and $n$ is even
	\end{itemize}
	then the trajectory has a caustic of the same geometric type as $\Q_{\alpha}$.
\end{lemma}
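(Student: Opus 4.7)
\textit{Proof proposal.} My plan is to translate the geometric $0$-weak periodicity condition into an algebro-geometric divisor equivalence on the isospectral curve $\Curve$ from equation \eqref{eq:curve3}, and then analyze the constraints imposed by that equivalence on the types of the caustics $\Q_{\gamma_1},\Q_{\gamma_2}$ relative to the type of $\Q_\alpha$.

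First I would show that extending the trajectory with a single reflection off $\Q_\alpha$ at the intersection point $X$ of the lines containing the first and $(n+1)$-st segments produces a closed billiard trajectory having $n$ reflections off $\E$ and one reflection off $\Q_\alpha$, all sharing the same caustics $\Q_{\gamma_1},\Q_{\gamma_2}$. Reading this closure condition on the Jacobian of $\Curve$, each reflection off $\E$ translates by a fixed element associated to $P_0$, and the reflection off $\Q_\alpha$ by the element associated to $P_\alpha$. Using the hyperelliptic involution together with the Weierstrass relation \eqref{eq:divisor-weierstrass}, this should yield a divisor equivalence of the form
\[
nP_0+P_\alpha\sim (n+1)P_\infty+D,
\]
where $D$ is a $2$-torsion correction divisor supported on the Weierstrass points $P_{a_i},P_{\gamma_j}$. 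This plays the role, for the $0$-weak periodic regime, of the three alternatives in Theorem~\ref{th:divisor-conditions}.

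Second, I would analyze the admissible $2$-torsion corrections $D$ using the Audin alternative (Lemma~\ref{lemma:audin}), which restricts how the parameters $\gamma_1,\gamma_2,\alpha$ can be positioned in the intervals $(-\infty,a_3)$, $(a_3,a_2)$, $(a_2,a_1)$. The key observation is that in order for the divisor on the right-hand side to realize a meromorphic function with the correct real structure (positivity of $\Pol$ on the relevant intervals and parity of the number of Weierstrass points in each interval along the real locus of $\Curve$), the support of $D$ must contain a Weierstrass point $P_{\gamma_i}$ lying in the same interval of the confocal pencil as $\alpha$. In the hyperboloid case this is forced directly by the real structure of the divisor relation, whereas in the ellipsoid case the parity hypothesis that $n$ is even enters exactly as in the distinction between the first (even $n$) and third (odd $n$) cases of Theorem~\ref{th:divisor-conditions}: for odd $n$ a different $2$-torsion correction with $P_{\gamma_1}$ an ellipsoid is available, which is precisely why the lemma excludes that case.

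The main obstacle I anticipate is writing down the precise divisor equivalence for the extended trajectory, namely identifying the correct $2$-torsion correction $D$ in each type-configuration. I expect that a Cayley-type analysis in the spirit of Lemma~\ref{lemma:cayley} and Proposition~\ref{prop:cayley-double-caustic}, based on expanding $\sqrt{\Pol(x)}$ (divided by an appropriate product of linear factors) in a Taylor series and examining the rank of a Hankel-type matrix, will produce the explicit constraint in each case. Matching the support of $D$ with the available Weierstrass points using Lemma~\ref{lemma:audin} then forces one of $\Q_{\gamma_1},\Q_{\gamma_2}$ to be of the same geometric type as $\Q_\alpha$, completing the proof.
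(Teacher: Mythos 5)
There is a genuine gap. The heart of this lemma is precisely the step you defer: deciding which Weierstrass points are forced to appear in the ``$2$-torsion correction'' $D$. In the paper this is done by a direct counting argument on the closed polygon $M_0'M_1\dots M_nM_0'$ obtained by adding the reflection point $M_0'\in\Q_{\alpha}$: one counts how many times the relevant Jacobi coordinate $\lambda_j$ attains each endpoint of its interval $[b_{2j-2},b_{2j-1}]$ along the closed polygon, observes that (in the stated cases) one endpoint is attained an \emph{odd} number of times, and then uses the parity constraint that a closed polygonal line crosses any coordinate hyperplane an even number of times to conclude that this endpoint cannot be one of the $a_k$, hence must be a caustic parameter; its position among the intervals $(-\infty,a_3)$, $(a_3,a_2)$, $(a_2,a_1)$ then fixes its geometric type. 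Your proposal replaces this with an appeal to ``the real structure of the divisor relation'' and to a Cayley/Hankel-type analysis in the spirit of Lemma~\ref{lemma:cayley}. But the Hankel rank conditions characterize \emph{when a prescribed divisor relation holds}; they do not tell you \emph{which} divisor relation (which support of $D$) is imposed by a given geometric configuration. That information comes exactly from the counting of local extrema of the elliptic coordinates, which your plan never carries out. So the decisive step of the proof is missing, not merely postponed.

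There is also an ordering issue you should be aware of: in the paper, the divisor equivalences for $0$-weak periodicity (Proposition~\ref{prop:divisor-skew-dim3}) are derived \emph{after} this lemma, and their case (iv) explicitly invokes the conclusion of the lemma (that $\Q_{\gamma_1}$ has the same geometric type as $\Q_{\alpha}$). Deriving the type constraint from the divisor equivalence therefore risks circularity unless you establish the full list of admissible equivalences independently — which again requires the same elliptic-coordinate parity analysis. Your first step (closing the trajectory by one reflection off $\Q_{\alpha}$) matches the paper and is sound; the rest needs to be replaced by, or grounded in, the explicit count of how many times each $\lambda_j$ reaches $b_{2j-2}$ and $b_{2j-1}$ along the closed polygon.
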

\begin{proof}
	{\color{black}
		Let $M_0M_1\dots M_{n+1}$ be a part of the billiard trajectory within $\E$
	 consisting of $n+1$ consecutive segments.
Since the trajectory is not periodic, i.e.~$M_0\neq M_{n+1}$, that is not a closed polygonal line.  The straight lines containing the first and the last segments, $M_0M_1$ and $M_nM_{n+1}$, intersect at point $M_0'\in\Q_{\alpha}$. They satisfy the reflection law there.
Thus the closed polygon $M_0'M_1M_2\dots M_nM_0'$ represents a closed trajectory of billiard with $n$ reflections off $\E$ and one reflection off $\Q_{\alpha}$.

The Jacobi elliptic coordinates $(\lambda_1,\lambda_2,\lambda_3)$ of any point on the polygon $M_0'M_1M_2\dots M_nM_0'$ satisfy $\lambda_1\le b_1$, $\lambda_2\in[b_2,b_3]$, $\lambda_3\in[b_4,b_5]$.
We note that, along the polygon, each of these coordinates can have a local extremum only in one of the following cases:
\begin{itemize}
	\item at the point of the intersection with a coordinate hyper plane;
	\item at the touching point with a caustic;
	\item at the reflection point.
\end{itemize}
Suppose that $\Q_{\alpha}$ is ellipsoid and $n$ is even.
For any point on $M_1\dots M_n$, its elliptic coordinate $\lambda_1$ belongs to $[0,b_1]$.
The value $\lambda_1=b_1$ is attained exactly once within each segment $M_1M_2$, $M_2M_3$, \dots, $M_{n-1}M_n$, i.e.~$n-1$ times along $M_1\dots M_n$.

If $\alpha<0$, i.e.~the ellipsoid $\Q_{\alpha}$ contains $\E$, then on segments $M_0'M_1$ and $M_nM_0'$, the coordinate $\lambda_1$ belongs to $[\alpha,b_1]$ and it attains the value $\lambda_1=b_1$ also exactly once on each of them.

If $\alpha>0$, i.e.~the ellipsoid $\Q_{\alpha}$ is within $\E$, then on $M_0'M_1$ and $M_nM_0'$, the coordinate $\lambda_1$ belongs to $[0,\alpha]$ and it does not attain the value $\lambda_1=b_1$ there.

We conclude that, on $M_0'M_1M_2\dots M_nM_0'$, the value $\lambda_1=b_1$ is attained either $n-1$ or $n+1$ times, which is an odd number.
Notice that $\lambda_1=a_3$ on the coordinate $x_1x_2$-plane, thus that must happen even number of times on the closed polygon $M_0'M_1M_2\dots M_nM_0'$, so $b_1\neq a_3$, i.e.~$b_1$ is the parameter of one of the caustics of the trajectory.
Since $b_1<a_3$, that caustic is an ellipsoid.

If $\Q_{\alpha}$ is a $1$-sheeted hyperboloid, then, reasoning similarly as in the previous case, we conclude that the elliptic coordinate $\lambda_2$ will, along $M_0'M_1M_2\dots M_nM_0'$, attain one of the endpoints of the {\color{black}closed interval} $[b_2,b_3]$ one time more than the other endpoint, so one of those numbers is odd and the other is even.
Again, the number must correspond to a caustic, since each coordinate hyperplane is crossed even number of times along a closed path.
That caustic will be a $1$-sheeted hyperboloid.

Similarly, when $\Q_{\alpha}$ is a $2$-sheeted hyperboloid, we get that the paramenter $b_4$ will correspond to a caustic, which is also a $2$-sheeted hyperboloid.

}
	
\end{proof}

{\color{black}
	\begin{remark}
Note that each ellipsoid or $1$-sheeted hyperboloid	divide the space into two connected sets, while a $2$-sheeted hyperboloid divides it into three components.

In the case when $\Q_{\alpha}$ is an ellipsoid, one of those sets, the interior of the ellipsoid, is bounded, while the exterior is unbounded.
The interior is in the elliptic coordinates given by $\lambda_1>\alpha$ and the exterior by $\lambda_1<\alpha$.

By analogy, as it was done in \cite{DragRadn2004}, we define \emph{the inside} of a $1$-sheeted hyperboloid $\Q_{\alpha}$
as the set given by $\lambda_2>\alpha$ and its \emph{outside} as given by $\lambda_2<\alpha$.
The inside in this case will be the connected part of the space containing the origin of the coordinate system.

\emph{The inside} of a $2$-sheeted hyperboloid $\Q_{\alpha}$
the set given by $\lambda_3>\alpha$ and its \emph{outside} by $\lambda_3<\alpha$.
The inside in this case will be again the connected part of the space containing the origin of the coordinate system, while the outside consists of the two remaining connected components.

The notions of inside and outside of a confocal quadric are important for the next Proposition \ref{cor:cayley-skew-dim3}, since they determine the side from where the collision with the quadric occurs, which plays role in the calculations.
	\end{remark}
}

\begin{proposition}\label{prop:divisor-skew-dim3}
	Consider a billiard trajectory within $\E$ with caustics $\Q_{\gamma_1}$ and $\Q_{\gamma_2}$.
	The first and the $(n+1)$-st segment of that trajectory intersect
	{\color{black}each other on $\Q_{\alpha}$ and satisfy the billiard law off that quadric}
	 if and only if one of the following conditions is satisfied:
	\begin{itemize}
		\item[(i)] $n$ is odd, $\Q_{\alpha}$ is an ellipsoid, and $$n(P_0-P_{b_1})\pm(P_{\alpha}-P_{b_1})\sim 0;$$
		
		\item[(ii)] $n$ is odd, $\Q_{\alpha}$ is an ellipsoid, both caustics are $1$-sheeted hyperboloids, and $$n(P_0-P_{a_3})\pm(P_{\alpha}-P_{a_3})+P_{\gamma_1}-P_{\gamma_2}\sim0;$$
		
		\item[(iii)] $n$ is odd, $\Q_{\alpha}$ and $\Q_{\gamma_2}$ are hyperboloids of the same type, $\Q_{\gamma_1}$ is ellipsoid, and
		$$n(P_0-P_{\gamma_1})\pm(P_{\alpha}-P_{\gamma_2})\sim 0;$$
		
		\item[(iv)] $n$ is even, $\Q_{\gamma_1}$ is a quadric of the same geometric type as $\Q_{\alpha}$, and $$n(P_0-P_{b_1})\pm(P_{\alpha}-P_{\gamma_1})\sim0.$$		
	\end{itemize}
{\color{black}
In each of the cases,
}	
	 the positive sign corresponds to the reflection off $\Q_{\alpha}$ from inside, and the negative one to the reflection from outside.
\end{proposition}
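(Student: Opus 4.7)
The strategy is to translate the geometric condition into a divisor equivalence on the hyperelliptic curve $\Curve$ of \eqref{eq:curve3}, mirroring the template of Theorem \ref{th:divisor-conditions}. The key correspondence sends each line $\ell$ tangent to the caustics $\Q_{\gamma_1},\Q_{\gamma_2}$ to a divisor class on $\Curve$ of degree equal to the genus $g=2$. Two consecutive segments of the billiard sharing a reflection point on a confocal quadric $\Q_\mu$ produce divisor classes whose difference is $\pm(P_\mu-P_\infty)$, the $\pm$ recording on which of the two sheets of $\Curve$ above $x=\mu$ the reflection occurs, i.e., whether the incoming segment meets $\Q_\mu$ from the inside or from the outside; for Weierstrass values $\mu\in\{a_1,a_2,a_3,\gamma_1,\gamma_2\}$, the two sheets collapse and this difference simplifies by means of $2P_b\sim 2P_\infty$, cf.~\eqref{eq:divisor-weierstrass}. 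This correspondence is the one used in \cite{DragRadn2011book,DragRadn2018} to derive Theorem \ref{th:divisor-conditions}.

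Granted this dictionary, the condition that the first and $(n+1)$-st segments intersect on $\Q_\alpha$ and satisfy the billiard reflection law off $\Q_\alpha$ is equivalent to the composition of $n$ reflections off $\Q_0=\E$ with one reflection off $\Q_\alpha$ acting trivially on the divisor class of the starting line. Schematically this yields the base relation
$$n(P_0-P_\infty)\pm(P_\alpha-P_\infty)\sim 0,$$
which is precisely the periodicity relation $n(P_0-P_\infty)\sim 0$ augmented by the extra contribution $\pm(P_\alpha-P_\infty)$ of the single auxiliary reflection off $\Q_\alpha$.

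The four listed cases then emerge by rewriting this base relation with an appropriate Weierstrass anchor so that the divisor degrees on both sides balance and all resulting points have the standard normalization of Theorem \ref{th:divisor-conditions}. The parity of $n$ and the types of $\Q_\alpha$ and of the caustics dictate the choice of anchor: $P_{b_1}$ when $\Q_\alpha$ is an ellipsoid and no additional constraint is active, giving case (i); $P_{a_3}$ with the degree correction $P_{\gamma_1}-P_{\gamma_2}$ when both caustics are $1$-sheeted hyperboloids and $\Q_\alpha$ is an ellipsoid, giving case (ii); $P_{\gamma_1}$ and $P_{\gamma_2}$ when $\Q_\alpha$ and $\Q_{\gamma_2}$ are hyperboloids of the same type and $\Q_{\gamma_1}$ is an ellipsoid, giving case (iii); and $P_{b_1}$ together with $P_{\gamma_1}$ when $n$ is even and $\Q_{\gamma_1}$ shares the geometric type of $\Q_\alpha$, giving case (iv). Lemma \ref{lemma:geom-type} enters crucially in cases (iii) and (iv) to guarantee that a caustic of the required type is indeed present among $\Q_{\gamma_1},\Q_{\gamma_2}$, ensuring that the Weierstrass point $P_{\gamma_1}$ (or $P_{\gamma_2}$) appearing in the final relation is well defined.

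The main obstacle is the careful bookkeeping of sheets and Weierstrass points that produces both the case split and the sign ambiguity. One must match the geometric ``inside/outside'' sign of the reflection off $\Q_\alpha$ with the choice of $P_\alpha^+$ versus $P_\alpha^-$ in the divisor equivalence; this is a local analysis at the reflection point, read off from the sign of the elliptic coordinate $\lambda_j$ of the type corresponding to $\Q_\alpha$ (namely $j=1$ when $\Q_\alpha$ is an ellipsoid, $j=2$ or $j=3$ when it is a hyperboloid), entirely analogous to the sheet analysis performed in the proof of Theorem \ref{th:divisor-conditions} in \cite{DragRadn2018}. The congruences $2P_b\sim 2P_\infty$ at the Weierstrass points and the involution identity $P_\alpha+P_\alpha^-\sim 2P_\infty$ then complete the translation of the base relation into each of the four stated divisor conditions.
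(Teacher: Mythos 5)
There is a genuine gap at the heart of your argument: the claim that the geometric condition reduces to the single ``base relation'' $n(P_0-P_\infty)\pm(P_\alpha-P_\infty)\sim 0$, from which cases (i)--(iv) ``emerge by rewriting with an appropriate Weierstrass anchor,'' is false. Using \eqref{eq:divisor-weierstrass}, case (i) is indeed equivalent to $nP_0+P_\alpha^{\pm}\sim(n\pm1)P_\infty$, i.e.\ to your base relation; but cases (ii) and (iii) reduce to $nP_0+P_\alpha^{\pm}+P_{\gamma_1}+P_{\gamma_2}\sim(n+3)P_\infty$ and case (iv) to $nP_0+P_\alpha^{\pm}+P_{\gamma_1}\sim(n+2)P_\infty$, and these differ from case (i) by the classes $P_{\gamma_1}+P_{\gamma_2}-2P_\infty$ and $P_{\gamma_1}-P_\infty$ respectively, which are \emph{not} linearly equivalent to zero on a genus-$2$ curve (that would force a degree-$1$ or degree-$2$ rational map off the hyperelliptic series). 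So the four cases are genuinely distinct divisor conditions, not four normalizations of one relation, and your proposal provides no mechanism for deciding which one actually holds for a given configuration of types.

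That mechanism is precisely what the paper's proof supplies and what your write-up omits. One forms the closed polygon $M_0'M_1\dots M_nM_0'$ (with $M_0'\in\Q_\alpha$ the intersection point), integrates the two Jacobi differentials $\sum_j d\lambda_j/\sqrt{\Pol(\lambda_j)}$ and $\sum_j \lambda_j\,d\lambda_j/\sqrt{\Pol(\lambda_j)}$ along it, and applies Abel's theorem to obtain
$$
n(P_0-P_{b_1})\pm(P_{\alpha}-P_{b_1})+n_2(P_{b_2}-P_{b_3})+n_3(P_{b_4}-P_{b_5})\sim 0,
$$
where $n_2,n_3$ count how many times $\lambda_2,\lambda_3$ reach the endpoints of their respective intervals along the closed path. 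The case split is then governed by the \emph{parities} of $n$, $n_2$, $n_3$: whenever an endpoint $b_k$ equals some $a_j$, the corresponding count must be even because a closed polygon crosses each coordinate hyperplane an even number of times, whereas if $b_k$ is a caustic parameter the count may be odd, leaving a residual $P_{\gamma_1}-P_{\gamma_2}$ (case (ii)) or shifting the anchor to $P_{\gamma_1}$ (cases (iii), (iv)) via Lemma \ref{lemma:audin}. The inside/outside sign likewise comes out of this analysis, from whether $\lambda_1=\alpha$ is a local minimum or maximum of the relevant elliptic coordinate at $M_0'$. Without carrying out this counting, the equivalence with the specific list (i)--(iv) --- including the type restrictions on the caustics in (ii)--(iv) --- is not established.
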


\begin{proof}
{\color{black}
In the elliptic coordinates, the differential equations
 for lines tangent to $\Q_{\gamma_1}$ and $\Q_{\gamma_2}$
 are:
$$
\begin{aligned}
&\frac{d\lambda_1}{\sqrt{\Pol(\lambda_1)}}
+
\frac{d\lambda_2}{\sqrt{\Pol(\lambda_2)}}
+
\frac{d\lambda_3}{\sqrt{\Pol(\lambda_3)}}
=0,
\\
&\frac{\lambda_1d\lambda_1}{\sqrt{\Pol(\lambda_1)}}
+
\frac{\lambda_2d\lambda_2}{\sqrt{\Pol(\lambda_2)}}
+
\frac{\lambda_3d\lambda_3}{\sqrt{\Pol(\lambda_3)}}
=0,
\end{aligned}
$$
see \cite{Jac}.
Following the ideas of Darboux \cites{Dar1870,DarbouxSUR}, similarly as in \cite[Theorems 1 and 2]{DragRadn2004}, we integrate those equations along the polygonal line $M_0'M_1\dots M_nM_0'$, which is determined as in the proof of Lemma \ref{lemma:same-type-caustics}.
In order to see which divisor relationships will be obtained by such integration, we need to analyse the behaviour of the elliptic coordinates along the integration path.

First, suppose that $\Q_{\alpha}$ is an ellipsoid and that the reflection at point $M_0'$ is from inside that ellipsoid.

Then the elliptic coordinate $\lambda_1$, has values $\lambda_1=\alpha$ at $M_0'$ and $\lambda_1=0$ at $M_1$, \dots, $M_n$.
Along each segment $M_jM_{j+1}$, $1\le j\le n-1$, the coordinate $\lambda_1$ monotonously increases from $0$ to $b_1$ and then decreases from $b_1$ to $0$.
The value $\lambda_1=b_1$ is achieved either at the touching point with the caustic or at the intersection point with the coordinate plane $x_3=0$: the first case occurs when one of the caustics is ellipsoid, and the latter when both caustics are hyperboloids.
Along segments $M_0'M_1$ and $M_0'M_n$, the coordinate $\lambda_1$ monotonously increases from $\alpha$ to $b_1$ and then decreases from $b_1$ to $0$.
Thus, along the polygonal line, the coordinate $\lambda_1$ achieved $n$ times the value $0$, once the value $\alpha$ and $n+1$ times the value $b_1$. Moreover, the values $0$ and $\alpha$ are its local minima and $b_1$ are maxima.
Note that $b_1\in\{\gamma_1,a_3\}$.
If $b_1=a_3$, then the values $\lambda_1=b_1=a_3$ occur at the intersection points with the coordinate plane $x_3=0$, {\color{black} whose quantity}  must be an even number on a closed polygonal line.
Thus, if both caustics are hyperboloids, $n$ must be an odd number.

The coordinates $\lambda_2$ and $\lambda_3$ monotonously change within {\color{black}closed intervals} $[b_2,b_3]$ and $[b_4,b_5]$ respectively, while the only extremal points on the polygon are the endpoints of those {\color{black}intervals}.
Denote by $n_2$ the number of times that $\lambda_2$ achieved each of the values $b_2$, $b_3$, and by $n_3$ the number of times that $\lambda_3$ achieved each of $b_4$, $b_5$.
Since $b_5=a_1$, note that $\lambda_2=b_5$ occurs at the intersection points with the coordinate plane $x_1=0$.
On a closed polygonal line, there must be an even number of such intersections, thus $n_3$ is even.

The differential equations {\color{black} and the classical Abel theorem (stating that two divisors are linearly equivalent if heir images under the Abel map coincide, see e.g.~\cite{Sp1957})} then lead to:
$$
n(P_0-P_{b_1})+(P_{\alpha}-P_{b_1})
+
n_2(P_{b_2}-P_{b_3})
+
n_3(P_{b_4}-P_{b_5})
\sim
0.
$$
Since $n_3$ is even, the equation \eqref{eq:divisor-weierstrass} implies $n_3(P_{b_4}-P_{b_5})\sim0$, so we have:
$$
n(P_0-P_{b_1})+(P_{\alpha}-P_{b_1})
+
n_2(P_{b_2}-P_{b_3})
\sim
0.
$$
If both caustics are $1$-sheeted hyperboloids, i.e.~$b_2=\gamma_1$, $b_3=\gamma_2$, then $b_1=a_3$ and $n$ is odd.
If $n_2$ is even, the term $n_2(P_{\gamma_1}-P_{\gamma_2})$ vanishes beacause of \eqref{eq:divisor-weierstrass}, so we get the case (i) with the plus sign.
If $n_2$ is odd,
$n_2(P_{\gamma_1}-P_{\gamma_1})
\sim
P_{\gamma_1}-P_{\gamma_2},
$
and we get the case (ii) with the plus sign.

If the caustics are hyperboloids of different types, then Lemma \ref{lemma:audin} implies $b_1=a_3$, $b_2=\gamma_1$, $b_3=a_2$, $b_4=\gamma_2$, $b_5=a_1$.
Thus, then $n_2$ must be even, $n$ odd, and we get the case (i) with the plus sign.

If one of the caustics is ellipsoid, we have $b_1=\gamma_1$.
Lemma \ref{lemma:audin} implies $b_2=a_3$, thus $n_2$ is even, so we get cases (i) or (iv) with the plus sign.

Next, suppose that $\Q_{\alpha}$ is an ellipsoid and that the reflection at point $M_0'$ is from outside that ellipsoid.
Then $\Q_{\alpha}$ is within $\E$, i.e.~$\alpha>0$.
Along segments $M_0'M_1$ and $M_0'M_n$, the coordinate $\lambda_1$ monotonously increases from $\alpha$ to $0$.
Thus, along the polygonal line, the coodinate $\lambda_1$ achieved $n$ times the value $0$, once the value $\alpha$ and $n-1$ times the value $b_1$.
In this case, we see that $\alpha$ will be a local maximum for $\lambda_1$.
If the numbers $n_2$, $n_3$ are introduced as in the previous case, we get that the
differential equations yield:
$$
(n-1)(P_0-P_{b_1})+(P_0-P_{\alpha})
+
n_2(P_{b_2}-P_{b_3})
+
n_3(P_{b_4}-P_{b_5})
\sim
0,
$$
or equivalently:
$$
n(P_0-P_{b_1})-(P_{\alpha}-P_{b_1})
+
n_2(P_{b_2}-P_{b_3})
+
n_3(P_{b_4}-P_{b_5})
\sim
0.
$$
Now, analyzing various cases as previously, we will get cases (i), (ii), or (iv), with the minus sign.

The reasoning is similar also when $\Q_{\alpha}$ is a hyperboloid.
}
\end{proof}

\begin{theorem}\label{th:polynomial-skew-dim3}
	A billiard trajectory within $\E$ with caustics $\Q_{\gamma_1}$ and $\Q_{\gamma_2}$ is $0$-weak $n$-periodic if and only if one of the following conditions is satisfied:
	\begin{itemize}
		\item $n=2k+1$ is odd, and there are real polynomials $p_{k+1}$ and $q_{k-2}$ of degrees $k+1$ and $k-2$ such that the polynomial
		$$
		p_{k+1}^2(x)-(a_1-x)(a_2-x)(a_3-x)(\gamma_1-x)(\gamma_2-x)q_{k-2}^2(x)
		$$
		has a zero of order $n$ at $x=0$;
		
		\item $n=2k+1$ is odd, $n\ge 5$, either one caustic is ellipsoid or both are $1$-sheeted hyperboloids, and there are real polynomials $p_{k}$ and $q_{k-1}$ of degrees $k$ and $k-1$ such that the polynomial
		$$
		(\gamma_1-x)(\gamma_2-x)p_{k}^2(x)-(a_1-x)(a_2-x)(a_3-x)q_{k-1}^2(x)
		$$
		has a zero of order $n$ at $x=0$;
		
		\item $n=2k$ is even and there are real polynomials $p_{k}$ and $q_{k-2}$ of degrees $k$ and $k-2$ such that the polynomial
		$$
		(\gamma_1-x)p_{k}^2(x)-(a_1-x)(a_2-x)(a_3-x)(\gamma_2-x)q_{k-2}^2(x)
		$$
		has a zero of order $n$ at $x=0$.
	\end{itemize}
{\color{black}
Moreover, in each of the cases, the corresponding polynomial will also have a simple real root $\alpha<a_3$, such that $\alpha$ is the parameter of the confocal quadric $\Q_{\alpha}$ mentioned in Proposition \ref{prop:divisor-skew-dim3}.
}
\end{theorem}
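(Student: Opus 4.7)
The plan is to translate each of the four divisor-theoretic conditions of Proposition \ref{prop:divisor-skew-dim3} into a polynomial identity via the norm map on the hyperelliptic function field of $\Curve$. The key observation is that any meromorphic function on $\Curve$ can be written as $h=p(x)+q(x)y$ with $p,q\in\mathbb{R}[x]$, and its norm under the hyperelliptic involution is $N(h)=h\bar h=p^2-\Pol\cdot q^2$. The zero and pole divisors of $h$ are controlled by the degrees of $p,q$, and the $x$-coordinates of zeros of $h$ appear as roots of $N(h)$.

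First I would reduce each divisor relation in Proposition \ref{prop:divisor-skew-dim3} to one of three canonical equivalences by applying the Weierstrass relation $2P_b\sim 2P_\infty$ of \eqref{eq:divisor-weierstrass}. Explicitly, case (i) of Proposition \ref{prop:divisor-skew-dim3} reduces (since $n+1$ is even) to $nP_0+P_\alpha\sim (n+1)P_\infty$; cases (ii) and (iii) both reduce to $nP_0+P_\alpha+P_{\gamma_1}+P_{\gamma_2}\sim (n+3)P_\infty$; and case (iv) reduces to $nP_0+P_\alpha+P_{\gamma_1}\sim (n+2)P_\infty$. These three canonical forms correspond exactly to the three cases of the theorem.

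For each canonical form I would construct the required function explicitly. In case~1 the divisor equivalence gives a function $h\in\mathcal{L}((n+1)P_\infty)$ with zeros at $nP_0+P_\alpha$; writing $h=p_{k+1}(x)-q_{k-2}(x)y$ and matching pole orders at $P_\infty$ (using $\operatorname{ord}_{P_\infty}(x)=-2$, $\operatorname{ord}_{P_\infty}(y)=-5$) forces precisely the degrees in the theorem, and $N(h)=p_{k+1}^2-\Pol\,q_{k-2}^2$ is then a polynomial of degree $n+1$ vanishing to order $n$ at $x=0$. The remaining linear factor yields the simple real root $\alpha$. In case~3, vanishing at the Weierstrass point $P_{\gamma_1}$ forces $p(\gamma_1)=0$, so $h=(x-\gamma_1)p_k(x)-q_{k-2}(x)y$; then
\[
N(h)=(\gamma_1-x)\bigl[(\gamma_1-x)p_k^2-(a_1-x)(a_2-x)(a_3-x)(\gamma_2-x)q_{k-2}^2\bigr],
\]
so the bracket has the claimed form, degree $n+1$, and a zero of order $n$ at $x=0$. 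Case~2 proceeds identically with $h=(x-\gamma_1)(x-\gamma_2)p_k(x)-q_{k-1}(x)y$, producing the factor $(\gamma_1-x)(\gamma_2-x)$ inside the norm and yielding the bracketed polynomial of the required shape. The converse direction is immediate by reversing these constructions: a polynomial identity of the prescribed form defines an $h$ on $\Curve$, and computing its divisor produces the corresponding canonical equivalence, hence the billiard condition via Proposition \ref{prop:divisor-skew-dim3}.

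The "moreover" part follows from the degree count: in every case the polynomial has degree $n+1$ with a zero of order exactly $n$ at $x=0$, so the quotient is linear, $c(x-\alpha)$ with $\alpha\in\mathbb{R}$, and this $\alpha$ is the $x$-coordinate of the extra zero $P_\alpha$ of $h$; its location $\alpha<a_3$ corresponds to $\Q_\alpha$ being an ellipsoid through the geometric analysis underlying Lemma \ref{lemma:geom-type} and \ref{lemma:audin}. The main bookkeeping obstacle is verifying, in each case, that the Weierstrass-point contributions from $P_{b_1},P_{\gamma_1},P_{\gamma_2}$ are exactly absorbed by the external polynomial factors $(\gamma_i-x)$ and that the resulting bracketed polynomial has the precise degree $n+1$ needed to extract a single additional root $\alpha$; once that alignment is established the theorem follows in each branch.
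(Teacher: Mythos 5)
Your proposal follows essentially the same route as the paper's proof: reduce the four divisor relations of Proposition \ref{prop:divisor-skew-dim3} to three canonical equivalences using $2P_b\sim 2P_\infty$, realize each by a function $p(x)+yq(x)$ whose degrees are forced by the basis of $\LL(mP_\infty)$ (with the Weierstrass-point vanishing absorbed into factors $(\gamma_i-x)$ of $p$), and take the norm $(p+yq)(p-yq)$ to produce the degree-$(n+1)$ polynomial with an order-$n$ zero at $x=0$. The only substantive gloss is the location of the leftover simple root: the paper establishes it by explicit sign evaluations of $\rho_{n+1}$ at $-\infty$, $a_3$, $\gamma_1$, $\gamma_2$, $a_2$, $a_1$ in each sub-case (and when $\Q_{\alpha}$ is a hyperboloid the root lands in $(a_3,a_2)$ or $(a_2,a_1)$ rather than in $(-\infty,a_3)$), whereas you defer this entirely to Lemma \ref{lemma:geom-type}.
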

\begin{proof}
	Suppose that the first and the $(n+1)$-st segments of a trajectory are reflected to each other off quadric $\Q_{\alpha}$.	
	
	Suppose first that (i) from Proposition \ref{prop:divisor-skew-dim3} is satisfied.
	 That divisor relation is equivalent to $nP_0+P_{\alpha}^{\pm}\sim (n+1)P_{\infty}$,
{\color{black}
as a consequence of the relation \eqref{eq:divisor-weierstrass} applied to $P_{b_1}$.}
	
	Since the space $\LL((n+1) P_{\infty})$ is generated by
	$$
	\{1,x,x^2, \dots, x^{k+1},  y, xy, \dots, x^{k-2}y\},
	$$
	{\color{black} where $n=2k+1$,} the relation will be equivalent to the existence of polynomials $p_{k+1}(x)$ and $q_{k-2}(x)$ of degrees $k+1$ and $k-2$ respectively, such that $p_{k+1}(x)+yq_{k-2}(x)$ has a zero of order $n$ at {\color{black}$P_0$} and
	$p_{k+1}(x)\pm yq_{k-2}(x)$ a zero at {\color{black}$P_{\alpha}$}.
	
	Assuming only that $p_{k+1}(x)+yq_{k-2}(x)$ has a zero of order $n$ at {\color{black}$P_0$, we have that the following function also will have a zero of order $n$ at $x=0$:}
	\begin{equation*}
\begin{aligned}
\rho_{n+1}(x)\ &=(p_{k+1}(x)+yq_{k-2}(x))(p_{k+1}(x)-yq_{k-2}(x))\\
&=
p_{k+1}^2(x)-(a_1-x)(a_2-x)(a_3-x)(\gamma_1-x)(\gamma_2-x)q_{k-2}^2(x).
\end{aligned}
\end{equation*}
{\color{black}
Notice that $\rho_{n+1}$ is	a	
	polynomial of degree $n+1$ in $x$ and it has a zero of order $n$ at $x=0$.

Another way to express the same argument is to substitute
$$
y=\sqrt{(a_1-x)(a_2-x)(a_3-x)(\gamma_1-x)(\gamma_2-x)}
$$
to the expression $p_{k+1}(x)+yq_{k-2}(x)$.
In that way, we get a two-valued holomorphic function in $x$, with two holomorphic branches near $x=0$.
One of those branches has a zero of order $n$ at $x=0$, while the other one does not vanish at that point.
Taking the product of the branches, we get the polynomial $\rho_{n+1}(x)$, which then has a zero of order $n$ at $x=0$.
}

We will prove that the remaining zero of $\rho_{n+1}$ lies in the {\color{black}open} interval $(-\infty,a_3)$.
	Notice that $\rho_{n+1}$ is a polynomial of even degree $n+1$ with the positive leading coefficient, thus $\rho_{n+1}(-\infty)>0$. Also, $\rho_{n+1}(a_3)>0$. On the other hand $x=0$ is its zero of order $n$, which means that the polynomial changes the sign at that point.
	That means that $\rho_{n+1}$ will always have another zero $\alpha$, satisfying $\alpha<a_3$.

	The divisor relations (ii) and (iii) from Proposition \ref{prop:divisor-skew-dim3} are both equivalent to
	$$
	nP_0+P_{\alpha}^{\pm}+P_{\gamma_1}+P_{\gamma_2}\sim (n+3)P_{\infty},
	$$
{\color{black}which follows from \eqref{eq:divisor-weierstrass}.
}
	Since the space $\LL((n+3) P_{\infty})$ is generated by
	$$
	\{1,x,x^2, \dots, x^{k+2} y, xy, \dots, x^{k-1}y\},
	$$
	that will be equivalent to the existence of polynomials $p_{k+2}(x)$ and $q_{k-1}(x)$ of degrees $k+2$ and $k-1$ respectfully, such that the function $p_{k+2}(x)+q_{k-1}(x)y$ has a  zero of order $n$ at
	{\color{black}$P_0$} and zeros at
	{\color{black}$P_{\gamma_1}$, $P_{\gamma_2}$}, while the function $p_{k+2}(x)\pm q_{k-1}(x)y$ has a zero at {\color{black}$P_{\alpha}$}.
	We note that
	{\color{black}
	$y$ has zeros at $P_{\gamma_1}$ and $P_{\gamma_2}$},
thus we will have that
	$p_{k+2}(x)=(\gamma_1-x)(\gamma_2-x)p_k(x)$, for a degree $k$ polynomial $p_k$.
	
	Assuming only that $p_{k+2}(x)+q_{k-1}(x)y$ has a zero of order $n$ at
	{\color{black}$P_0$},
	we have that the following
{\color{black}function has  a zero of order $n$ at $x=0$}:
	\begin{equation}\label{eq:rho_n+1}
	\begin{aligned}
	\rho_{n+1}(x)&=\frac{(p_{k+2}(x)+q_{k-1}(x)y)(p_{k+2}(x)-q_{k-1}(x)y)}{(\gamma_1-x)(\gamma_2-x)}
	\\&=
	(\gamma_1-x)(\gamma_2-x)p_{k}^2(x)-(a_1-x)(a_2-x)(a_3-x)q_{k-1}^2(x).
	\end{aligned}
	\end{equation}
{\color{black}
Notice that $\rho_{n+1}$ is a polynomial of degree $n+1$ in $x$ with a zero of order $n$ at $x=0$}.
	Let us examine the position of its remaining zero.
	
{\color{black}	Since} $\rho_{n+1}$ is an even degree polynomial with the positive leading coefficient, {\color{black}we have} $\rho_{n+1}(-\infty)>0$.
	
	In case (ii), we also have $\rho_{n+1}(a_3)>0$.
	On the other hand $x=0$ is a zero of order $n$ of $\rho_{n+1}$, which means that the polynomial changes the sign at that point.
	That implies that $\rho_{n+1}$ will always have another zero $\alpha$, satisfying $\alpha<a_3$.
	
	In case (iii), we have $\gamma_1\in(0,a_3)$, and $\gamma_2$, $\alpha$ are both in $(a_3,a_2)$ or both in $(a_2,a_1)$.
	For $\gamma_2\in(a_3,a_2)$, we have that $\alpha\in(a_3,\gamma_2)$, and we check: $\rho_{n+1}(\gamma_2)<0$, $\rho_{n+1}(a_3)>0$, thus $\rho_{n+1}$ certainly has a zero in the requested interval.
	For $\gamma_2\in(a_2,a_1)$, we have that $\alpha\in(\gamma_2,a_1)$, and we check:
	$\rho_{n+1}(\gamma_2)<0$, $\rho_{n+1}(a_1)>0$, thus again $\rho_{n+1}$ certainly has a zero in the requested interval.
	
	Now, consider the case when $n$ is even, $n=2k$, i.e.~the case (iv) of Proposition \ref{prop:divisor-skew-dim3}.
	The divisor relation from there is equivalent to $nP_0+P_{\alpha}^{\pm}+P_{\gamma_1}\sim (n+2)P_{\infty}$,
	{\color{black}which again follows from the equivalence \eqref{eq:divisor-weierstrass} of double Weierstrass points on hyperelliptic curve}.
	Since the space $\LL((n+2) P_{\infty})$ is generated by
	$\{1,x,x^2, \dots, x^{k+1}, y, xy,\dots, x^{k-2}y\}$, that will be equivalent to the existence of polynomials $p_{k+1}(x)$ and $q_{k-2}(x)$ of degrees $k+1$ and $k-2$ such that the function $p_{k+1}(x)+q_{k-2}(x)y$ has a zero of order $n$ at
	{\color{black}$P_0$} and a zero at {\color{black}$P_{\gamma_1}$}, while the function $p_{k+1}(x)\pm q_{k-2}(x)y$ has a zero at {\color{black}$P_{\alpha}$}.
	We note that $\gamma_1$ is a zero of $y$, thus we will have that
	$p_{k+1}(x)=(\gamma_1-x)p_k(x)$, for a polynomial $p_k$ of degree $k$.
	
	Assuming only that $p_{k+1}(x)+q_{k-2}(x)y$ has a zero of order $n$ at {\color{black}$P_0$}, we have that the following
	{\color{black} function has a zero of order $n$ at $x=0$}:
	\begin{equation}\label{eq:rho_n+1'}
	\begin{aligned}
	\rho_{n+1}(x)&=\frac{(p_{k+1}(x)+q_{k-2}(x)y)(p_{k+1}(x)-q_{k-2}(x)y)}{\gamma_1-x}
	\\&
	=
	(\gamma_1-x)p_k^2(x)-q_{k-2}^2(x)(a_1-x)(a_2-x)(a_3-x)(\gamma_2-x).
	\end{aligned}
	\end{equation}
{\color{black}
Notice that $\rho_{n+1}$ is a polynomial of degree $n+1$, with a zero of order $n$ at $x=0$.
}	
	We will show that the remaining zero of that expression lies in the same interval $(-\infty,a_3)$, $(a_3,a_2)$, $(a_2,a_1)$ as $\gamma_1$.
	
	\emph{Case 1: $\Q_{\gamma_1}$ is ellipsoid.}
	Notice that $\rho_{n+1}$ is an odd degree polynomial with the negative leading coefficient, thus $\rho_{n+1}(-\infty)>0$, and also, $\rho_{n+1}(\gamma_1)<0$.
	From there $\rho_{n+1}$ must have a zero in $(-\infty, \gamma_1)$.
	
	\emph{Case 2: $\Q_{\gamma_1}$ is $1$-sheeted hyperboloid.}
	If $\Q_{\gamma_2}$ is an ellipsoid, then we need to show that $\rho_{n+1}$ has a zero in $(a_3,\gamma_1)$. This will be true since $\rho_{n+1}(\gamma_1)<0$ and $\rho_{n+1}(a_3)>0$.
	
	If $\Q_{\gamma_2}$ is also a $1$-sheeted hyperboloid, then we need to show that $\rho_{n+1}$ has a zero between $\gamma_1$ and $\gamma_2$.
	That will be true since $\sign\rho_{n+1}(\gamma_1)=\sign(\gamma_2-\gamma_1)=-\sign\rho_{n+1}(\gamma_2)$.
	
	If $\Q_{\gamma_2}$ is a $2$-sheeted hyperboloid, then we need to show that $\rho_{n+1}$ has a zero in $(\gamma_1,a_2)$.
	That will be true since $\rho_{n+1}(\gamma_1)>0$ and $\rho_{n+1}(a_2)<0$.
	
	\emph{Case 3: $\Q_{\gamma_1}$ is a $2$-sheeted hyperboloid.}
	Polynomial $\rho_{n+1}$ has a zero in $(\gamma_1,a_1)$ since
	$\rho_{n+1}(\gamma_1)>0$ and $\rho_{n+1}(a_1)<0$.
\end{proof}

\begin{corollary}\label{cor:pell3}
	If the billiard trajectory is $0$-weak $n$-periodic, then there are polynomials $p_{n+1}(x)$, $q_{n-2}(x)$ and $r_1(x)$ of degrees $n+1$, $n-2$ and $1$ such that:
	\begin{equation}\label{eq:weak-pell}
	p_{n+1}^2(x)
	-
	\Pol(x)
	q_{n-2}^2(x)
	=
	x^{2n} r_1^2(x).
	\end{equation}
\end{corollary}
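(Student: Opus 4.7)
The plan is to derive the desired identity from each of the three cases of Theorem \ref{th:polynomial-skew-dim3} by means of a uniform ``squaring'' trick, since in every case Theorem \ref{th:polynomial-skew-dim3} supplies a polynomial difference of the shape $A(x)-B(x)=c\,x^n(x-\alpha)$, where $A$ and $B$ are explicit products whose product factors through $\Pol(x)$.

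Concretely, I would set:
\begin{itemize}
\item Case $n=2k+1$: $A=p_{k+1}^2$, $B=\Pol\,q_{k-2}^2$, so $AB=\Pol\,(p_{k+1}q_{k-2})^2$;
\item Case $n=2k+1$ with the alternative form: $A=(\gamma_1-x)(\gamma_2-x)p_k^2$, $B=(a_1-x)(a_2-x)(a_3-x)q_{k-1}^2$, so $AB=\Pol\,(p_k q_{k-1})^2$;
\item Case $n=2k$: $A=(\gamma_1-x)p_k^2$, $B=(a_1-x)(a_2-x)(a_3-x)(\gamma_2-x)q_{k-2}^2$, so $AB=\Pol\,(p_k q_{k-2})^2$.
\end{itemize}
In each of the three cases, Theorem \ref{th:polynomial-skew-dim3} (together with the remark about the additional root $\alpha$) gives $A-B=c\,x^n(x-\alpha)$ for a suitable constant $c$.

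Then I would invoke the elementary algebraic identity $(A+B)^2-(A-B)^2=4AB$, which yields
\begin{equation*}
(A+B)^2 - 4AB = (A-B)^2 = c^2\,x^{2n}(x-\alpha)^2.
\end{equation*}
Substituting $AB=\Pol\cdot M^2$, where $M$ denotes $p_{k+1}q_{k-2}$, $p_k q_{k-1}$, or $p_k q_{k-2}$ according to the case, I would define
\begin{equation*}
p_{n+1}(x):=A(x)+B(x), \qquad q_{n-2}(x):=2M(x), \qquad r_1(x):=c(x-\alpha),
\end{equation*}
which immediately gives the Pell-type relation \eqref{eq:weak-pell}.

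The only remaining bookkeeping is to verify the degree conditions, case by case: in every case the leading degree of $A+B$ turns out to be $n+1$ (the competing degrees of $A$ and $B$ are $n+1$ and $n$, respectively, so no unexpected cancellation occurs in the leading coefficient), while $\deg M=n-2$ and $\deg r_1=1$. The sign information needed to guarantee that $\alpha$ is real and distinct from the roots of $\Pol$ has already been extracted inside the proof of Theorem \ref{th:polynomial-skew-dim3}, so no additional analytic work is required here. I do not anticipate a genuine obstacle: the entire argument is a uniform algebraic consequence of Theorem \ref{th:polynomial-skew-dim3}, and the only minor subtlety is ensuring that the factorizations of $A$ and $B$ really multiply to $\Pol$ times a square in each of the three subcases, which is a direct check from the explicit forms.
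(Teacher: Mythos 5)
Your proposal is correct and follows essentially the same route as the paper: in each of the three cases of Theorem \ref{th:polynomial-skew-dim3} one has $A-B=c\,x^n(x-\alpha)$ with $AB=\Pol\cdot M^2$, and the paper sets $p_{n+1}:=A-\tfrac12 x^n(x-\alpha)=\tfrac12(A+B)$, which is your construction up to an overall factor of $2$ absorbed differently into $p_{n+1}$, $q_{n-2}$ and $r_1$. Your degree bookkeeping ($\deg A=n+1$, $\deg B=n$, $\deg M=n-2$) and the sign constant $c$ handling the third case match the paper's case-by-case verification.
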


\begin{proof}
	According to Theorem \ref{th:polynomial-skew-dim3}, there are three possible cases.
	
	In the first case of Theorem \ref{th:polynomial-skew-dim3}, there are polynomials $p_{k+1}$ and $q_{k-2}$ of degrees $k+1$ and $k-2$ such that
	$$
	p_{k+1}^2(x)-(a_1-x)(a_2-x)(a_3-x)(\gamma_1-x)(\gamma_2-x)q_{k-2}^2(x)=x^n(x-\alpha).
	$$
	Set:
	$$
	p_{n+1}(x):=p_{k+1}^2(x)-\frac{1}2x^n\left(x-\alpha\right).
	$$
	We have:
	$$
	\begin{aligned}
	p_{n+1}^2(x)
	\ &=
	p_{k+1}^2(x)
	\left(
	p_{k+1}^2(x)-x^n(x-{\alpha})
	\right)
	+
	\frac14 x^{2n} \left(x-\alpha\right)^2
	\\
	&=
	p_{k+1}^2(x)
	\cdot
	\Pol(x) q_{k-2}^2(x)
	+
	\frac{1}4 x^{2n} \left(x-{\alpha}\right)^2.
	\end{aligned}
	$$
	Thus:
	$$
	q_{n-2}=p_{k+1} q_{k-2},
	\quad
	r_1(x)=\frac{1}2\left(x-{\alpha}\right).
	$$
	
	In the second case of Theorem \ref{th:polynomial-skew-dim3}, there are polynomials $p_{k}$ and $q_{k-1}$ of degrees $k$ and $k-1$ such that
	$$
	(\gamma_1-x)(\gamma_2-x)p_{k}^2(x)-(a_1-x)(a_2-x)(a_3-x)q_{k-1}^2(x)=x^n(x-\alpha).
	$$
	Set:
	$$
	p_{n+1}(x):=
	({\gamma_1}-x)({\gamma_2}-x)p_{k}^2(x)
	-
	\frac{1}2x^n\left(x-{\alpha}\right).
	$$
	We have:
	$$
	\begin{aligned}
	p_{n+1}^2(x)
	\ &=
	({\gamma_1}-x)({\gamma_2}-x)p_{k}^2(x)
	\left(
	({\gamma_1}-x)({\gamma_2}-x)p_{k}^2(x)
	-x^n\left(x-{\alpha}\right)
	\right)
	+
	\frac{1}4x^{2n}\left(x-{\alpha}\right)^2
	\\
	&=
	p_{k}^2(x)
	\cdot
	\Pol(x) q_{k-1}^2(x)
	+
	\frac{1}4 x^{2n} \left(x-{\alpha}\right)^2.
	\end{aligned}
	$$
	Thus:
	$$
	q_{n-2}= p_{k} q_{k-1},
	\quad
	r_1(x)=\frac{1}2\left(x-{\alpha}\right).
	$$
	
	In the third case of Theorem \ref{th:polynomial-skew-dim3}, there are polynomials $p_{k}$ and $q_{k-2}$ of degrees $k$ and $k-2$ such that
	$$
	(\gamma_1-x)p_{k}^2(x)-(a_1-x)(a_2-x)(a_3-x)(\gamma_2-x)q_{k-2}^2(x)=-x^n(x-\alpha).
	$$
	Notice the negative sign at the righthand side, since the leading coefficient of the lefthand side is negative.
	
	Set:
	$$
	p_{n+1}(x):=
	\left({\gamma_1}-x\right) p_{k}^2(x)
	+
	\frac{1}2 x^n \left(x-{\alpha}\right).
	$$
	We have:
	$$
	\begin{aligned}
	p_{n+1}^2(x)
	\ &=
	(\gamma_1-x)\tilde p_{k}^2(x)
	\left(
	(\gamma_1-x) p_{k}^2(x)+x^n (x-\alpha)
	\right)
	+
	\frac14 x^{2n} \left(x-\alpha\right)^2
	\\
	&=
	p_{k}^2(x)
	\cdot
	\Pol(x) q_{k-2}^2(x)
	+
	\frac14 x^{2n} \left(x-{\alpha}\right)^2.
	\end{aligned}
	$$
	Thus:
	$$
	q_{n-2}= p_{k} q_{k-2},
	\quad
	r_1(x)=\frac12\left(x-{\alpha}\right).
	$$
\end{proof}

\begin{corollary}\label{cor:cayley-skew-dim3}
	A billiard trajectory within $\E$ with caustics $\Q_{\gamma_1}$ and $\Q_{\gamma_2}$ is $0$-weak $n$-periodic if and only if one of the following conditions is satisfied:
	\begin{itemize}
		
				\item $n=2k+1$ is odd, $n\ge 5$, and
		\begin{equation}\label{eq:detA}
		\det\left(
		\begin{array}{cccc}
		A_4 & A_5 & \dots & A_{k+2}\\
		A_5 & A_6 & \dots & A_{k+3}\\
		\dots\\
		A_{k+2} & A_{k+3}&\dots& A_{2k}
		\end{array}
		\right)
		=0;
		\end{equation}

		\item $n=2k+1$ is odd, either one of the caustics is ellipsoid or both of them are $1$-sheeted hyperboloids, and $\det M=0$, where $M$ is $n\times n$ matrix with entries:
		\begin{equation}\label{eq:Mij}
		M_{ij}=
		\begin{cases}
		\gamma_1\gamma_2, & i+j=n-k+1;
		\\
		-\gamma_1-\gamma_2, & i+j=n-k+2,\ j\le n-k;
		\\
		1, & i+j=n-k+3,\ j\le n-k;
		\\
		A_{i-1-j+n}, & j\ge n-k+1,\ i\ge j-n+1;
		\\
		0, &\text{otherwise};
		\end{cases}
		\end{equation}

		\item $n=2k$ is even and $\det N=0$, where $N$ is $n\times n$ matrix with entries:
		\begin{equation}\label{eq:Nij}
		N_{ij}=
		\begin{cases}
		-\gamma_1, & i+j=n-k+2;
		\\
		1, & i+j=n-k+3,\ j\le n-k+1;
		\\
		A_{i-1-j+n}, & j\ge n-k,\ i\ge j-n+1;
		\\
		0, &\text{otherwise}.
		\end{cases}
		\end{equation}
	\end{itemize}
Coefficients $A_j$ are defined as:
$\sqrt{\Pol(x)}=A_0+A_1x+A_2x^2+\dots$.
\end{corollary}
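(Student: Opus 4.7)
The plan is to convert each of the three polynomial existence conditions of Theorem \ref{th:polynomial-skew-dim3} into the solvability of a square linear system whose unknowns are the coefficients of the unknown polynomials $p$ and $q$, and then to identify the determinant of that coefficient matrix with the determinants displayed in the statement. The key observation is that $\Pol(0)=a_1a_2a_3\gamma_1\gamma_2\neq 0$, so the expansion $\sqrt{\Pol(x)}=A_0+A_1x+A_2x^2+\dots$ is holomorphic near $x=0$. If $R(x)^2-\Pol(x)\,S(x)^2$ has a zero of order $n$ at $x=0$, then (up to a sign choice) the factor $R-S\sqrt{\Pol}$ vanishes to order $n$ while $R+S\sqrt{\Pol}$ does not vanish at $0$; equivalently, $R(x)\equiv S(x)\sqrt{\Pol(x)}\pmod{x^n}$.

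For the first case of Theorem \ref{th:polynomial-skew-dim3}, I would take $R=p_{k+1}$, $S=q_{k-2}$ and expand the congruence into $n=2k+1$ linear equations obtained by matching the coefficients of $1,x,\dots,x^{n-1}$. The first $k+2$ equations (for $x^0,\dots,x^{k+1}$) are used to read off the $k+2$ coefficients of $p_{k+1}$ in terms of the $k-1$ coefficients $q_0,\dots,q_{k-2}$ of $q_{k-2}$, leaving exactly the $k-1$ consistency equations
$$\sum_{l=0}^{k-2}q_l\,A_{j-l}=0,\qquad j=k+2,k+3,\dots,2k,$$
on those coefficients. The corresponding coefficient matrix is the $(k-1)\times(k-1)$ Hankel matrix whose $(r,c)$-entry is $A_{r+c+2}$; up to reversing the column order it coincides with the matrix in \eqref{eq:detA}. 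Hence a nontrivial pair $(p_{k+1},q_{k-2})$ with $q_{k-2}\not\equiv 0$ exists if and only if \eqref{eq:detA} holds.

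The second and third cases proceed along the same lines but with two bookkeeping modifications. First, one absorbs the extra factors $(\gamma_1-x)(\gamma_2-x)$ (respectively $\gamma_1-x$) into the polynomial on the left, setting $R(x)=(\gamma_1-x)(\gamma_2-x)p_k(x)$ (resp.~$R(x)=(\gamma_1-x)p_k(x)$), so that again the congruence reads $R(x)\equiv q(x)\sqrt{\Pol(x)}\pmod{x^n}$. Second, because $R$ is now a linear function of the coefficients of $p_k$ through multiplication by $\gamma_1\gamma_2-(\gamma_1+\gamma_2)x+x^2$ (resp.~$\gamma_1-x$), the corresponding block of columns acquires the three-term (resp.~two-term) shift pattern $\gamma_1\gamma_2,\,-(\gamma_1+\gamma_2),\,1$ (resp.~$-\gamma_1,\,1$) along consecutive anti-diagonals, while the columns coming from the coefficients of $q_{k-1}$ (resp.~$q_{k-2}$) retain the pure Hankel pattern of $A_m$'s. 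After a suitable reversal of the ordering within the $p$-block, these patterns reproduce exactly the piecewise definitions \eqref{eq:Mij} and \eqref{eq:Nij}, and existence of a nontrivial solution becomes $\det M=0$ or $\det N=0$ respectively.

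The main obstacle I foresee is purely combinatorial: in each of cases (ii) and (iii) one must pin down the convention for ordering the unknowns (the $p$-block requires reversed order, and the exact alignment of anti-diagonal shifts versus Hankel entries dictates the placement of the $\gamma_1\gamma_2$, $-(\gamma_1+\gamma_2)$, $1$ strips in \eqref{eq:Mij} and of the $-\gamma_1$, $1$ strips in \eqref{eq:Nij}) so that the resulting coefficient matrix matches the prescribed indices $i+j=n-k+1,\,n-k+2,\,n-k+3$ together with $M_{ij}=A_{i-1-j+n}$. Once this identification is made, the three determinantal conditions correspond exactly to the three cases of Theorem \ref{th:polynomial-skew-dim3}, and the corollary follows.
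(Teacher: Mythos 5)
Your proposal is correct and follows essentially the same route as the paper: both reduce each case of Theorem \ref{th:polynomial-skew-dim3} to the vanishing of the first $n$ Taylor coefficients of $p(x)+\sqrt{\Pol(x)}\,q(x)$ at $x=0$, view this as an $n\times n$ homogeneous linear system in the coefficients of $p$ and $q$, and observe that after eliminating the $p$-block (the identity/anti-diagonal strip) the solvability condition is exactly the stated determinant. Your elimination of the $p_{k+1}$ coefficients to obtain the $(k-1)\times(k-1)$ Hankel consistency system, and your description of the $\gamma_1\gamma_2$, $-(\gamma_1+\gamma_2)$, $1$ (resp.\ $-\gamma_1$, $1$) strips arising from the factors $(\gamma_1-x)(\gamma_2-x)$ (resp.\ $\gamma_1-x$), match the paper's argument and its worked low-period examples.
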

\begin{proof}
{\color{black}
In the proof of the first condition of Theorem \ref{th:polynomial-skew-dim3} is that $n=2k+1$ is odd and that there are real polynomials:
$$
p_{k+1}(x)=\sum_{j=0}^{k+1}\alpha_j x^j,
\quad
q_{k-2}(x)=\sum_{j=0}^{k-2}\beta_j x^j,
$$
such that the function $f=p_{k+1}(x)+y q_{k-2}(x)$, which is defined on $\Curve$, has a zero of order $n$ at $P_0$.
Thus, the condition is equivalent to vanishing of the first $n$ coefficients in the Taylor series of $f$ around $P_0$, which gives a system of $n$ homogeneous linear equations in $n$ unknowns $\alpha_0$, \dots, $\alpha_{k+1}$, $\beta_0$, \dots, $\beta_{k-2}$.
The matrix of that system has a $(k+2)\times(k+2)$ identity submatrix in the upper left corner, thus the existence of its non-zero solution is equivalent to the determinant of its $(k-1)\times(k-1)$ submatrix in the lower right corner being equal to zero, i.e.~to \ref{eq:detA}.

The determinant conditions for the remaining cases are obtained in a similar way.
}	
\end{proof}

\subsection{$0$-weak periodicity in elliptic coordinates in dimension 3}\label{sec:weak3elliptic}

In this section, we will consider billiard trajectories in elliptic coordinates, that is up to the reflections with respect to the coordinate planes.
Equivalently, this is the billiard within one of the solids bounded by the ellipsoid and the three coordinate planes.

{\color{black}
Let us explain that more precisely.
Suppose that $\pazocal{T}$ is a billiard trajectory within $\E$.
Trajectory $\pazocal{T}$ \emph{in elliptic coordinates} is its projection $\pazocal{T}_e$ which maps any point $X$ with Cartesian coordinates $(x_1,x_2,x_3)$ to the point {\color{black} $X_e=(|x_1|,|x_2|,|x_3|)$}.
We note that $X_e$ is the unique point in the first octant of the space which has the same elliptic coordinate as $X$.

\begin{definition}\label{def:0-weak-elliptic}
We say that a billiard trajectory $\pazocal{T}$ within $\E$ is \emph{$0$-weak $n$-periodic in elliptic coordinates} if
the projections $t_1^e$, $t_{n+1}^e$ of the first and $(n+1)$-st segments of $\pazocal{T}$ intersect on a confocal quadric $\Q_{\alpha}$ and satisfy the billiard reflection law off that quadric at the intersection point.
\end{definition}}

\begin{lemma}\label{lemma:divisor-skew3-elliptic}
{\color{black}A} billiard trajectory within $\E$ with caustics $\Q_{\gamma_1}$ and $\Q_{\gamma_2}$
{\color{black}$0$-weak $n$-periodic in elliptic coordinates}
 if and only if, for some $j\in\{1,2,3,4,5\}$ such that $\Pol(x)\ge0$ for each $x$ between $\alpha$ and $b_j$ and some $\varepsilon_1,\varepsilon_2\in\{0,1\}$,
	the following identity holds:
	\begin{equation}\label{eq:divisor-skew3-elliptic}
	n(P_0-P_{b_1})
	\pm(P_{\alpha}-P_{b_j})
	+\varepsilon_1(P_{b_2}-P_{b_3})
	+\varepsilon_2(P_{b_4}-P_{b_5})
	\sim 0.
	\end{equation}
\end{lemma}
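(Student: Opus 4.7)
The approach is to mimic the proof of Proposition \ref{prop:divisor-skew-dim3} after passing to the elliptic-coordinate projection of the trajectory. The essential simplification is that once we identify all points having the same elliptic coordinates, the resulting closed polygonal loop in the first octant is no longer constrained by the even-parity crossings of the coordinate hyperplanes $x_i = 0$. Consequently, the counts $n_2, n_3$ appearing in the proof of Proposition \ref{prop:divisor-skew-dim3} may now have arbitrary parity, and after the reductions $2(P_{b_{2i-2}} - P_{b_{2i-1}}) \sim 0$ coming from \eqref{eq:divisor-weierstrass}, they collapse into the two free binary parameters $\varepsilon_1, \varepsilon_2 \in \{0,1\}$.

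For the forward direction, I would first construct $M_0'$ on $\Q_\alpha$ as the intersection of the lines carrying the first and $(n+1)$-st segments of $\pazocal{T}$ and verify that the billiard reflection law is satisfied there, thereby closing a polygonal loop $M_0'M_1\dots M_nM_0'$ in elliptic coordinates with $n$ reflections off $\E$ and one off $\Q_\alpha$. Then I would integrate the two Jacobi differential equations of Proposition \ref{prop:divisor-skew-dim3} along this loop and invoke Abel's theorem on $\Curve$. The contribution from each coordinate $\lambda_i$ sweeping back and forth between its extremes $b_{2i-2}, b_{2i-1}$ becomes a multiple of $(P_{b_{2i-2}} - P_{b_{2i-1}})$ that reduces to $\varepsilon_i (P_{b_{2i-2}} - P_{b_{2i-1}})$ modulo principal divisors, while the coordinate that takes the value $\alpha$ at the single endpoint $M_0'$ contributes $\pm (P_\alpha - P_{b_j})$. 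The sign encodes whether $\alpha$ is a local minimum or local maximum of that coordinate along the loop, equivalently whether the reflection at $M_0'$ is from the inside or the outside of $\Q_\alpha$. The admissibility hypothesis $\Pol(x)\ge 0$ for all $x$ between $\alpha$ and $b_j$ then corresponds precisely to the requirement that the relevant branch of $\Curve$ connects $P_\alpha$ to $P_{b_j}$ along real points, i.e.~that the trajectory can indeed continuously reach the next extremum after leaving $M_0'$.

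For the converse, starting from the divisor equivalence \eqref{eq:divisor-skew3-elliptic} and using \eqref{eq:divisor-weierstrass} to clear the Weierstrass-point terms, I would apply a Riemann--Roch argument parallel to Theorem \ref{th:polynomial-skew-dim3}: the divisor relation is equivalent to the existence of meromorphic functions $p(x) + y q(x)$ on $\Curve$ with prescribed zero of order $n$ at $P_0$ and an additional zero at $P_\alpha^{\pm}$; the product of the two branches then yields a polynomial identity whose vanishing at $x = 0$ of order $n$ recovers the $n$ successive bounces on $\E$, and whose remaining simple root identifies the auxiliary reflecting quadric $\Q_\alpha$. Reversing the geometric interpretation in elliptic coordinates gives the $0$-weak $n$-periodicity in the sense of Definition \ref{def:0-weak-elliptic}.

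The main obstacle, which I expect to be essentially bookkeeping rather than conceptual, is the case analysis over the admissible positions of $\alpha$ with respect to the Weierstrass points $b_1 < \dots < b_5$ and the two possible reflection-direction signs. In the Cartesian setting of Proposition \ref{prop:divisor-skew-dim3} the parity restrictions from coordinate-plane crossings constrained $\alpha$ to lie in specific intervals and limited the admissible combinations of caustic types. Removing these restrictions in the elliptic-coordinate setting makes the statement more uniform, but it simultaneously enlarges the collection of sub-cases that must be checked to confirm that every choice of $j$, every sign, and every $(\varepsilon_1, \varepsilon_2)$ permitted by the $\Pol \ge 0$ admissibility condition genuinely arises from a trajectory.
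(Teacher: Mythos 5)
Your proposal is correct and follows essentially the same route as the paper: the paper's proof likewise reduces to the argument of Proposition \ref{prop:divisor-skew-dim3}, with the single key observation that in elliptic coordinates the hyperplane-crossing counts $n_2$, $n_3$ need not be even, so that by \eqref{eq:divisor-weierstrass} the terms $n_2(P_{b_2}-P_{b_3})$ and $n_3(P_{b_4}-P_{b_5})$ collapse to $\varepsilon_1(P_{b_2}-P_{b_3})$ and $\varepsilon_2(P_{b_4}-P_{b_5})$ with $\varepsilon_i$ of the same parity. You correctly identified exactly this as the essential difference, so no further comparison is needed.
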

{\color{black}
\begin{proof}
This can be proved similarly as Proposition \ref{prop:divisor-skew-dim3}.
The key difference is that here, for periodicity in elliptic coordinates, it is not true that the number of intersections of a polygonal line with a coordinate hyperplane must be even.
Thus the numbers $n_2$, $n_3$ introduced as in the proof of Proposition \ref{prop:divisor-skew-dim3} can be even or odd in any of the cases.
Using \eqref{eq:divisor-weierstrass}, we get
$n_2(P_{b_2}-P_{b_3})\sim\varepsilon_1(P_{b_2}-P_{b_3})$ and $n_3(P_{b_4}-P_{b_5})\sim\varepsilon_2(P_{b_4}-P_{b_5})$,
with $\varepsilon_1$, $\varepsilon_2$ are of the same parity as $n_2$, $n_3$ respectively.
\end{proof}	
}

\subsubsection*{Polynomial equations in dimension 3}

We will derive polynomial equations for $0$-{\color{black}weak} periodic trajectories {\color{black}in elliptic coordinates}.

Suppose first that $\Q_{\alpha}$ is ellipsoid and $n$ even.
Then \refeq{eq:divisor-skew3-elliptic} yields
$$
nP_0+P_{\alpha}^{\pm}+P_{b_1}
+\varepsilon_1(P_{b_2}+P_{b_3})
+\varepsilon_2(P_{b_4}+P_{b_5})
\sim (n+2+2\varepsilon_1+2\varepsilon_2)P_{\infty}.
$$
Since the space $\LL(2mP_{\infty})$, $m=\dfrac{n}{2}+1+\varepsilon_1+\varepsilon_2$, is generated by:
$$
1,x,\dots,x^{m},
y,xy,\dots,x^{m-3}y,
$$
the divisor identity is equivalent to the existence of real polynomials $p_m(x)$ and $q_{m-3}(x)$, such that {\color{black}the function} $p_m(x)+yq_{m-3}(x)$
{\color{black}
on} $\Curve$ has a zero of order $n$ at
{\color{black}
$P_0$}, a zero at {\color{black}$P_{b_1}$}, zeros of order $\varepsilon_1$ at {\color{black}$P_{b_2}$, $P_{b_3}$}, and zeros of order $\varepsilon_2$ at
{\color{black}
$P_{b_4}$, $P_{b_5}$}, while $p_m(x)\pm yq_{m-3}(x)$ has a zero at {\color{black}$P_{\alpha}$}.
Since $b_1$, \dots, $b_5$ are zeros of $y$, we have that
$p_m(x)
=
(b_1-x)(b_2-x)^{\varepsilon_1}(b_3-x)^{\varepsilon_1}
(b_4-x)^{\varepsilon_2}(b_5-x)^{\varepsilon_2}
p_{m-1-2\varepsilon_1-2\varepsilon_2}(x)$,
and we conclude that the following expression:
$$
\begin{aligned}
\rho_{n+1}(x)=\ &
\frac{(p_m(x)+yq_{m-3}(x))(p_m(x)-yq_{m-3}(x))}
{(b_1-x)(b_2-x)^{\varepsilon_1}(b_3-x)^{\varepsilon_1}
	(b_4-x)^{\varepsilon_2}(b_5-x)^{\varepsilon_2}}
\\
=\ &
(b_1-x)(b_2-x)^{\varepsilon_1}(b_3-x)^{\varepsilon_1}
(b_4-x)^{\varepsilon_2}(b_5-x)^{\varepsilon_2}
p_{m-1-2\varepsilon_1-2\varepsilon_2}^2(x)
\\
&-
(b_2-x)^{1-\varepsilon_1}(b_3-x)^{1-\varepsilon_1}
(b_4-x)^{1-\varepsilon_2}(b_5-x)^{1-\varepsilon_2}
q_{m-3}^2(x)
\end{aligned}
$$
is a polynomial of degree $n+1$ with a zero of order $n$ at $x=0$ and a simple zero at $x=\alpha$.
The leading coefficient of $\rho_{n+1}$ is negative, so $\rho_{n+1}(-\infty)>0$.
We also have that $\rho_{n+1}(b_1)<0$.
Notice that at $x=0$ the polynomial will not change the sign, so $\alpha\in(-\infty,b_1)$.

Second, suppose that $\Q_{\alpha}$ is ellipsoid and $n$ is odd.
Then \refeq{eq:divisor-skew3-elliptic} yields
$$
nP_0+P_{\alpha}^{\pm}
+\varepsilon_1(P_{b_2}+P_{b_3})
+\varepsilon_2(P_{b_4}+P_{b_5})
\sim (n+1+2\varepsilon_1+2\varepsilon_2)P_{\infty}.
$$
Since the space $\LL(2mP_{\infty})$, $m=\dfrac{n+1}{2}+\varepsilon_1+\varepsilon_2$, is generated by:
$$
1,x,\dots,x^{m},
y,xy,\dots,x^{m-3}y,
$$
the divisor identity is equivalent to the existence of real polynomials $p_m(x)$ and $q_{m-3}(x)$, such that $p_m(x)+yq_{m-3}(x)$ has a zero of order $n$ at {\color{black}$P_0$}, zeros of order $\varepsilon_1$ at {\color{black}$P_{b_2}$, $P_{b_3}$}, and zeros of order $\varepsilon_2$ at {\color{black}$P_{b_4}$, $P_{b_5}$}, while $p_m(x)\pm yq_{m-3}(x)$ has a zero at
{\color{black}
$P_{\alpha}$}.
Since $b_1$, \dots, $b_5$ are zeros of $y$, we have that
$p_m(x)
=
(b_2-x)^{\varepsilon_1}(b_3-x)^{\varepsilon_1}
(b_4-x)^{\varepsilon_2}(b_5-x)^{\varepsilon_2}
p_{m-2\varepsilon_1-2\varepsilon_2}(x)$,
and we conclude that the following expression:
$$
\begin{aligned}
\rho_{n+1}(x)=\ &
\frac{(p_m(x)+yq_{m-3}(x))(p_m(x)-yq_{m-3}(x))}
{(b_2-x)^{\varepsilon_1}(b_3-x)^{\varepsilon_1}
	(b_4-x)^{\varepsilon_2}(b_5-x)^{\varepsilon_2}}
\\
=\ &
(b_2-x)^{\varepsilon_1}(b_3-x)^{\varepsilon_1}
(b_4-x)^{\varepsilon_2}(b_5-x)^{\varepsilon_2}
p_{m-2\varepsilon_1-2\varepsilon_2}^2(x)
\\
&-
(b_1-x)(b_2-x)^{1-\varepsilon_1}(b_3-x)^{1-\varepsilon_1}
(b_4-x)^{1-\varepsilon_2}(b_5-x)^{1-\varepsilon_2}
q_{m-3}^2(x)
\end{aligned}
$$
is a polynomial of degree $n+1$ with a zero of order $n$ at {\color{black}$P_0$} and a simple zero at {\color{black}$P_{\alpha}$}.
The leading coefficient of $\rho_{n+1}$ is positive, so $\rho_{n+1}(-\infty)>0$.
We also have that $\rho_{n+1}(b_1)>0$.
Notice that at $x=0$ the polynomial will change the sign, so $\alpha\in(-\infty,b_1)$.

Third, suppose that $\Q_{\alpha}$ is $1$-sheeted hyperboloid and $n$ even.
Then \refeq{eq:divisor-skew3-elliptic} yields
$$
nP_0+P_{\alpha}^{\pm}
+\varepsilon_1 P_{b_2}+(1-\varepsilon_1) P_{b_3}
+\varepsilon_2(P_{b_4}+P_{b_5})
\sim (n+2+2\varepsilon_2)P_{\infty}.
$$
Since the space $\LL(2mP_{\infty})$, $m=\dfrac{n}{2}+1+\varepsilon_2$, is generated by:
$$
1,x,\dots,x^{m},
y,xy,\dots,x^{m-3}y,
$$
the divisor identity is equivalent to the existence of real polynomials $p_m(x)$ and $q_{m-3}(x)$, such that $p_m(x)+yq_{m-3}(x)$ has a zero of order $n$ at
{\color{black}
$P_0$}, zeros of order $\varepsilon_1$, $1-\varepsilon_1$ at {\color{black}$P_{b_2}$, $P_{b_3}$}, and zeros of order $\varepsilon_2$ at
{\color{black}
$P_{b_4}$, $P_{b_5}$}, while $p_m(x)\pm yq_{m-3}(x)$ has a zero at {\color{black}$P_{\alpha}$}.
Since $b_1$, \dots, $b_5$ are zeros of $y$, we have that
$p_m(x)
=
(b_2-x)^{\varepsilon_1}(b_3-x)^{1-\varepsilon_1}
(b_4-x)^{\varepsilon_2}(b_5-x)^{\varepsilon_2}
p_{m-1-2\varepsilon_2}(x)$,
and we conclude that the following expression:
$$
\begin{aligned}
\rho_{n+1}(x)=\ &
\frac{(p_m(x)+yq_{m-3}(x))(p_m(x)-yq_{m-3}(x))}
{(b_2-x)^{\varepsilon_1}(b_3-x)^{1-\varepsilon_1}
	(b_4-x)^{\varepsilon_2}(b_5-x)^{\varepsilon_2}}
\\
=\ &
(b_2-x)^{\varepsilon_1}(b_3-x)^{1-\varepsilon_1}
(b_4-x)^{\varepsilon_2}(b_5-x)^{\varepsilon_2}
p_{m-1-2\varepsilon_2}^2(x)
\\
&-
(b_1-x)(b_2-x)^{1-\varepsilon_1}(b_3-x)^{\varepsilon_1}
(b_4-x)^{1-\varepsilon_2}(b_5-x)^{1-\varepsilon_2}
q_{m-3}^2(x)
\end{aligned}
$$
is a polynomial of degree $n+1$ with a zero of order $n$ at $x=0$ and a simple zero at $x=\alpha$.
It is easy to see that $\rho_{n+1}(b_2)>0$ and $\rho_{n+1}(b_3)<0$, thus $\alpha\in(b_2,b_3)$.

Fourth, suppose that $\Q_{\alpha}$ is $1$-sheeted hyperboloid and $n$ odd.
Then \refeq{eq:divisor-skew3-elliptic} yields
$$
nP_0+P_{\alpha}^{\pm}+P_{b_1}
+\varepsilon_1 P_{b_2}+(1-\varepsilon_1) P_{b_3}
+\varepsilon_2(P_{b_4}+P_{b_5})
\sim (n+3+2\varepsilon_2)P_{\infty}.
$$
Since the space $\LL(2mP_{\infty})$, $m=\dfrac{n+1}{2}+1+\varepsilon_2$, is generated by:
$$
1,x,\dots,x^{m},
y,xy,\dots,x^{m-3}y,
$$
the divisor identity is equivalent to the existence of real polynomials $p_m(x)$ and $q_{m-3}(x)$, such that $p_m(x)+yq_{m-3}(x)$ has a zero of order $n$ at
{\color{black}
$P_0$}, simple zero at {\color{black}$P_{b_1}$}, zeros of order $\varepsilon_1$, $1-\varepsilon_1$ at
{\color{black}
$P_{b_2}$, $P_{b_3}$}, and zeros of order $\varepsilon_2$ at
{\color{black}
$P_{b_4}$, $P_{b_5}$}, while $p_m(x)\pm yq_{m-3}(x)$ has a zero at
{\color{black}
$P_{\alpha}$}.
Since $b_1$, \dots, $b_5$ are zeros of $y$, we have that
$p_m(x)
=
(b_1-x)(b_2-x)^{\varepsilon_1}(b_3-x)^{1-\varepsilon_1}
(b_4-x)^{\varepsilon_2}(b_5-x)^{\varepsilon_2}
p_{m-2-2\varepsilon_2}(x)$,
and we conclude that the following expression:
$$
\begin{aligned}
\rho_{n+1}(x)=\ &
\frac{(p_m(x)+yq_{m-3}(x))(p_m(x)-yq_{m-3}(x))}
{(b_1-x)(b_2-x)^{\varepsilon_1}(b_3-x)^{1-\varepsilon_1}
	(b_4-x)^{\varepsilon_2}(b_5-x)^{\varepsilon_2}}
\\
=\ &
(b_1-x)(b_2-x)^{\varepsilon_1}(b_3-x)^{1-\varepsilon_1}
(b_4-x)^{\varepsilon_2}(b_5-x)^{\varepsilon_2}
p_{m-2-2\varepsilon_2}^2(x)
\\
&-
(b_2-x)^{1-\varepsilon_1}(b_3-x)^{\varepsilon_1}
(b_4-x)^{1-\varepsilon_2}(b_5-x)^{1-\varepsilon_2}
q_{m-3}^2(x)
\end{aligned}
$$
is a polynomial of degree $n+1$ with a zero of order $n$ at $x=0$ and a simple zero at $x=\alpha$.
It is easy to see that $\rho_{n+1}(b_2)<0$ and $\rho_{n+1}(b_3)>0$, thus $\alpha\in(b_2,b_3)$.

Fifth, suppose that $\Q_{\alpha}$ is $2$-sheeted hyperboloid and $n$ even.
Then \refeq{eq:divisor-skew3-elliptic} yields
$$
nP_0+P_{\alpha}^{\pm}
+\varepsilon_1 (P_{b_2}+ P_{b_3})
+\varepsilon_2 P_{b_4}+ (1-\varepsilon_2)P_{b_5}
\sim (n+2+2\varepsilon_2)P_{\infty}.
$$
Since the space $\LL(2mP_{\infty})$, $m=\dfrac{n}{2}+1+\varepsilon_2$, is generated by:
$$
1,x,\dots,x^{m},
y,xy,\dots,x^{m-3}y,
$$
the divisor identity is equivalent to the existence of real polynomials $p_m(x)$ and $q_{m-3}(x)$, such that $p_m(x)+yq_{m-3}(x)$ has a zero of order $n$ at
{\color{black}
$P_0$}, zeros of order $\varepsilon_1$ at
{\color{black}
$P_{b_2}$, $P_{b_3}$}, and zero of order $\varepsilon_2$, $1-\varepsilon_2$ at
{\color{black}
$P_{b_4}$, $P_{b_5}$}, while $p_m(x)\pm yq_{m-3}(x)$ has a zero at
{\color{black}
$P_{\alpha}$}.
Since $b_1$, \dots, $b_5$ are zeros of $y$, we have that
$p_m(x)
=
(b_2-x)^{\varepsilon_1}(b_3-x)^{\varepsilon_1}
(b_4-x)^{\varepsilon_2}(b_5-x)^{1-\varepsilon_2}
p_{m-1-2\varepsilon_2}(x)$,
and we conclude that the following expression:
$$
\begin{aligned}
\rho_{n+1}(x)=\ &
\frac{(p_m(x)+yq_{m-3}(x))(p_m(x)-yq_{m-3}(x))}
{(b_2-x)^{\varepsilon_1}(b_3-x)^{\varepsilon_1}
	(b_4-x)^{\varepsilon_2}(b_5-x)^{1-\varepsilon_2}}
\\
=\ &
(b_2-x)^{\varepsilon_1}(b_3-x)^{\varepsilon_1}
(b_4-x)^{\varepsilon_2}(b_5-x)^{1-\varepsilon_2}
p_{m-1-2\varepsilon_2}^2(x)
\\
&-
(b_1-x)(b_2-x)^{1-\varepsilon_1}(b_3-x)^{1-\varepsilon_1}
(b_4-x)^{1-\varepsilon_2}(b_5-x)^{\varepsilon_2}
q_{m-3}^2(x)
\end{aligned}
$$
is a polynomial of degree $n+1$ with a zero of order $n$ at $x=0$ and a simple zero at $x=\alpha$.
It is easy to see that $\rho_{n+1}(b_4)>0$ and $\rho_{n+1}(b_5)<0$, thus $\alpha\in(b_2,b_3)$.

Sixth, suppose that $\Q_{\alpha}$ is $1$-sheeted hyperboloid and $n$ odd.
Then \refeq{eq:divisor-skew3-elliptic} yields
$$
nP_0+P_{\alpha}^{\pm}+P_{b_1}
+\varepsilon_1 (P_{b_2}+P_{b_3})
+\varepsilon_2P_{b_4}+(1-\varepsilon_2)P_{b_5}
\sim (n+3+2\varepsilon_2)P_{\infty}.
$$
Since the space $\LL(2mP_{\infty})$, $m=\dfrac{n+1}{2}+1+\varepsilon_2$, is generated by:
$$
1,x,\dots,x^{m},
y,xy,\dots,x^{m-3}y,
$$
the divisor identity is equivalent to the existence of real polynomials $p_m(x)$ and $q_{m-3}(x)$, such that $p_m(x)+yq_{m-3}(x)$ has a zero of order $n$ at
{\color{black}
$P_0$}, simple zero at
{\color{black}
$P_{b_1}$}, zeros of order $\varepsilon_1$ at
{\color{black}
$P_{b_2}$, $P_{b_3}$}, and zeros of order $\varepsilon_2$, $1-\varepsilon_2$ at
{\color{black}
$P_{b_4}$, $P_{b_5}$}, while $p_m(x)\pm yq_{m-3}(x)$ has a zero at
{\color{black}
$P_{\alpha}$}.
Since $b_1$, \dots, $b_5$ are zeros of $y$, we have that
$p_m(x)
=
(b_1-x)(b_2-x)^{\varepsilon_1}(b_3-x)^{\varepsilon_1}
(b_4-x)^{\varepsilon_2}(b_5-x)^{1-\varepsilon_2}
p_{m-2-2\varepsilon_2}(x)$,
and we conclude that the following expression:
$$
\begin{aligned}
\rho_{n+1}(x)=\ &
\frac{(p_m(x)+yq_{m-3}(x))(p_m(x)-yq_{m-3}(x))}
{(b_1-x)(b_2-x)^{\varepsilon_1}(b_3-x)^{\varepsilon_1}
	(b_4-x)^{\varepsilon_2}(b_5-x)^{1-\varepsilon_2}}
\\
=\ &
(b_1-x)(b_2-x)^{\varepsilon_1}(b_3-x)^{\varepsilon_1}
(b_4-x)^{\varepsilon_2}(b_5-x)^{1-\varepsilon_2}
p_{m-2-2\varepsilon_2}^2(x)
\\
&-
(b_2-x)^{1-\varepsilon_1}(b_3-x)^{1-\varepsilon_1}
(b_4-x)^{1-\varepsilon_2}(b_5-x)^{\varepsilon_2}
q_{m-3}^2(x)
\end{aligned}
$$
is a polynomial of degree $n+1$ with a zero of order $n$ at $x=0$ and a simple zero at $x=\alpha$.
It is easy to see that $\rho_{n+1}(b_4)<0$ and $\rho_{n+1}(b_5)>0$, thus $\alpha\in(b_4,b_5)$.

\begin{theorem}\label{the:pell-weak-3}
	The billiard trajectory is $0$-{\color{black}weak} $n$-periodic in elliptic coordinates if and only if there are polynomials $p_{n+1}(x)$, $q_{n-2}(x)$ and $r_1(x)$ of degrees $n+1$, $n-2$ and $1$ such that \refeq{eq:weak-pell} is satisfied.
\end{theorem}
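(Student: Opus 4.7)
The plan is to prove both implications by leveraging the six case analyses (over the geometric type of $\Q_{\alpha}$ and the parity of $n$) that were just carried out, in complete analogy with the Cartesian-case argument of Corollary \ref{cor:pell3}.

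For the forward direction, each of the six preceding cases produced a polynomial of the form
\[
\rho_{n+1}(x) \;=\; U(x)\,\tilde p(x)^{2} \;-\; V(x)\,q(x)^{2},
\]
where $U$, $V$ are the explicit complementary products of linear factors $(b_{i}-x)$ satisfying $U(x)V(x)=\Pol(x)$, and $\rho_{n+1}$ has a zero of order $n$ at $x=0$ together with a simple zero at $x=\alpha$. Hence $\rho_{n+1}(x)=c\,x^{n}(x-\alpha)$ for a non-zero constant $c$. I would then introduce
\[
p_{n+1}(x)\;:=\;U(x)\tilde p(x)^{2}\;-\;\tfrac{c}{2}\,x^{n}(x-\alpha),
\]
and compute
\[
p_{n+1}^{2}(x)
=U\tilde p^{2}\bigl(U\tilde p^{2}-c\,x^{n}(x-\alpha)\bigr)+\tfrac{c^{2}}{4}x^{2n}(x-\alpha)^{2}
=\Pol(x)\bigl(\tilde p(x)q(x)\bigr)^{2}+\tfrac{c^{2}}{4}x^{2n}(x-\alpha)^{2}.
\]
Setting $q_{n-2}(x):=\tilde p(x)q(x)$ and $r_{1}(x):=\tfrac{c}{2}(x-\alpha)$ yields exactly \eqref{eq:weak-pell}. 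A case-by-case degree count, using the identities $\deg U+2\deg\tilde p=n+1$ and $\deg V+2\deg q=n$ (which follow from the explicit values of the auxiliary integer $m$ in each case), confirms $\deg p_{n+1}=n+1$, $\deg q_{n-2}=n-2$, $\deg r_{1}=1$.

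For the converse, suppose such polynomials satisfy \eqref{eq:weak-pell}. Write $r_{1}(x)=\beta(x-\alpha)$ and consider $f:=p_{n+1}(x)+y\,q_{n-2}(x)$ as a meromorphic function on the isospectral curve $\Curve$ of \eqref{eq:hcurve}. From \eqref{eq:weak-pell} the norm $f\bar f$ equals $x^{2n}r_{1}^{2}(x)$, so $f$ has a pole of order $2n+2$ at $P_{\infty}$ and every finite zero of $f$ lies above $x=0$ or $x=\alpha$. The hyperelliptic involution forces the divisor of $f$ to take the shape
\[
(f)=m_{1}P_{0}+m_{2}P_{0}^{-}+m_{3}P_{\alpha}+m_{4}P_{\alpha}^{-}-(2n+2)P_{\infty},\qquad m_{1}+m_{2}=2n,\ m_{3}+m_{4}=2.
\]
Invoking the equivalences $P_{b}+P_{b}^{-}\sim 2P_{\infty}$ and $2P_{b_{i}}\sim 2P_{\infty}$ at Weierstrass points, I would reorganize the principal-divisor relation $(f)\sim 0$ into the form \eqref{eq:divisor-skew3-elliptic} for appropriate $j\in\{1,\dots,5\}$ and $\varepsilon_{1},\varepsilon_{2}\in\{0,1\}$ dictated by the parities of $m_{1}-m_{2}$ and $m_{3}-m_{4}$. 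Lemma \ref{lemma:divisor-skew3-elliptic} then delivers $0$-weak $n$-periodicity in elliptic coordinates.

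The forward direction is essentially a uniform algebraic trick applied to the six already-computed expressions, and should be routine. The main obstacle is the bookkeeping in the converse: one must verify that every admissible distribution $(m_{1},m_{2},m_{3},m_{4})$ realizing a principal divisor of the above form corresponds, under the three Weierstrass equivalences, to exactly one of the six geometric configurations of Lemma \ref{lemma:divisor-skew3-elliptic}, with the correct assignment of the flags $(\varepsilon_{1},\varepsilon_{2})$ encoding parities of coordinate-hyperplane crossings. Care is also required in the degenerate case where $\alpha$ coincides with some $b_{j}$, because then $P_{\alpha}$ and $P_{\alpha}^{-}$ collapse to a single Weierstrass point and the divisor analysis must be carried out by a short limiting argument.
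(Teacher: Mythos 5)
Your forward direction is correct and is essentially the paper's own argument: the paper just points back to the completing-the-square identity of Corollary \ref{cor:pell3}, which is exactly the computation you perform, and your degree bookkeeping $\deg U+2\deg\tilde p=n+1$, $\deg V+2\deg q=n$ checks out in all six cases.

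The converse, however, contains a genuine gap. You correctly compute that $f=p_{n+1}+y\,q_{n-2}$ has divisor $m_1P_0+m_2P_0^-+m_3P_{\alpha}+m_4P_{\alpha}^--(2n+2)P_{\infty}$ with $m_1+m_2=2n$ and $m_3+m_4=2$. But then $m_1-m_2$ and $m_3-m_4$ are both \emph{even}, so after applying $P_c+P_c^-\sim 2P_{\infty}$ the relation $(f)\sim 0$ only yields
$$
|m_1-m_2|\,P_0^{\pm}+|m_3-m_4|\,P_{\alpha}^{\pm}\sim\bigl(|m_1-m_2|+|m_3-m_4|\bigr)P_{\infty},
$$
i.e.\ generically $2\bigl(nP_0+P_{\alpha}^{\pm}-(n+1)P_{\infty}\bigr)\sim 0$. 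This is twice the relation \eqref{eq:divisor-skew3-elliptic} you need, and on the genus-$2$ curve $\Curve$ the class $nP_0+P_{\alpha}^{\pm}-(n+1)P_{\infty}$ can be a nontrivial $2$-torsion element: $2D\sim0$ does not imply $D\sim0$, and the two Weierstrass equivalences you invoke cannot remove this obstruction. (The doubling is not an artifact: when the trajectory really is $0$-weak $n$-periodic one has $f=g^2/(2U)$ for the function $g=p_m+yq_{m-3}$ of the six case analyses, with $U=\prod(b_l-x)^{\varepsilon_l'}$.) What is needed is a square root of $f$, and that is precisely what the paper's converse manufactures by polynomial means: from $p_{n+1}^2-x^{2n}r_1^2=\Pol\,q_{n-2}^2$ and coprimality it factors $p_{n+1}\mp x^nr_1=\rho_{1,2}\kappa_{1,2}^2$ with $\rho_1\rho_2=\Pol$, $\kappa_1\kappa_2=q_{n-2}$; the function $\rho_2\kappa_2+y\kappa_1$ then has norm $2\rho_2\,x^nr_1$, vanishing to order exactly $n$ over $x=0$, and its divisor is the required relation, while the sign analysis of $\rho_1,\rho_2$ (using $\Pol(\alpha)>0$) supplies the positivity constraint on the location of $\alpha$ demanded by Lemma \ref{lemma:divisor-skew3-elliptic}. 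One could instead try to argue that every $2$-torsion class of a hyperelliptic Jacobian is a sum of differences of Weierstrass points, but that is a substantially stronger input than the equivalences you cite, and it would still leave the reality and positivity matching with Lemma \ref{lemma:divisor-skew3-elliptic} to be carried out.
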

\begin{proof}
	If the trajectory is $0$-{\color{black}weak} $n$-periodic, we can derive \refeq{eq:weak-pell} similarly as in Corollary \ref{cor:pell3}, from the polynomial equations derived in this section.
	
	Now, we suppose that there there are polynomials $p_{n+1}(x)$,
	$q_{n-2}(x)$ and $r_1(x)$ satisfying \refeq{eq:weak-pell}.
	Then:
	$$
	(p_{n+1}(x)-x^n r_1(x))(p_{n+1}(x)+x^n r_1(x))=\Pol(x) q_{n-2}^2(x).
	$$
	Notice that $p_{n+1}(x)$ and $x^nr_1(x)$ do not have common factors, which implies the same for $p_{n+1}(x)-x^n r_1(x)$ and $p_{n+1}(x)+x^n r_1(x)$.
	Thus, we get:
	$$
	p_{n+1}(x)-x^n r_1(x)=\rho_1(x) \kappa_1^2(x),
	\quad
	p_{n+1}(x)+x^n r_1(x)=\rho_2(x) \kappa_2^2(x),
	$$
	with
	$$
	\rho_1\rho_2=\Pol,
	\quad
	\kappa_1\kappa_2= q_{n-2}.
	$$
	Subtracting the two equalities, we get:
	$$
	\rho_2(x)\kappa_2^2(x)-\rho_1(x)\kappa_1^2(x)=2 x^n r_1.
	$$
	The possible distributions of factors of $\Pol(x)$ into $\rho_1$ and $\rho_2$ are obtained using $\Pol(\alpha)>0$ for $r_1(\alpha)=0$ and the fact that the polynomial $\rho_2(x)\kappa_2^2(x)-\rho_1(x)\kappa_1^2(x)=2 x^n r_1$ has only $2$ zeros: $x=0$, of multiplicity $n$, and a simple zero $x=\alpha$.
\end{proof}

\subsection{Weak periodic trajectories in dimension $d$ and generalized Pell's equations}\label{sec:sweakdPell}

The consideration of the previous Sections  can be generalized as follows, using the concept of $s$-skew lines from \cite{DragRadn2008}, see also \cite{DragRadn2011book}. For two given lines {\color{black} $\ell_1$ and $\ell_2$} which share the same caustics in the $d$-dimensional space, we say that they
are $s$-skew if $s$ is the smallest number
{\color{black}no greater than $d-2$
 such that there exists a system of
$s+1$} quadrics $\Q_{\alpha_k}$, $k = 1, \dots, s+1$ from the confocal family, such that
the line {\color{black} $\ell_2$ is obtained from $\ell_1$} by consecutive reflections off  $\Q_{\alpha_k}$. If {\color{black}two distinct} lines {\color{black} $\ell_1$ and $\ell_2$} intersect {\color{black}or they are parallel}, they are $0$-skew. {\color{black}The lines} are $(-1)$-skew if they coincide.

{\color{black}
Note that some of the quadrics from the system $\Q_{\alpha_k}$ can coincide among themselves or with $\E$.

Let us explain why the inequality $s\le d-2$ is imposed in the notion of $s$-skew lines.
Namely, any pair of the lines in the space is $s$-skew for the unique value {\color{black} $s\in\{-1,0,\dots,d-2\}$} and, moreover, the system of quadrics $\Q_{\alpha_k}$ is uniquely determined, up to their order \cite{DragRadn2008}.
If the requirement $s\le d-2$ would be omitted, then the uniqueness would be lost.

}

\begin{definition}\label{def:s-skew-d} A billiard trajectory is \emph{$s$-weak $n$-periodic} if
{\color{black}the lines containing}	
	 its first and $(n+1)$-st segments are $s$-skew.
\end{definition}

\begin{theorem}\label{the:pell-weak-d}
	The billiard trajectory is $s$-weak periodic with period $n$ in elliptic coordinates if and only if there are polynomials $p_{n+s+1}(x)$, $q_{n+s+1-d}(x)$ and $r_{s+1}(x)$ of degrees $n+s+1$, $n+s+1-d$ and $s+1$ such that they pairwise do not have any common factors and
\begin{equation}\label{eq:pell-weak-d}
	p_{n+s+1}^2(x)-\Pol_{2d-1}(x)q_{n+s+1-d}^2(x)=x^{2n}r_{s+1}^2(x).
\end{equation}
\end{theorem}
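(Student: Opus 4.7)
The strategy generalizes the three-dimensional case of Theorem \ref{the:pell-weak-3} and Corollary \ref{cor:pell3}, and proceeds in three stages. First, translate $s$-weak $n$-periodicity in elliptic coordinates into a divisor equivalence on the isospectral curve $\Curve$ of \eqref{eq:hcurve} via Jacobi integration; second, convert that equivalence into the existence of a function $f=p(x)+y\,q(x)\in\LL(mP_\infty)$ whose norm on $\Curve$ yields the polynomial identity; third, reverse both steps for the converse.

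For the forward direction, assume the first and $(n+1)$-st segments are related by consecutive reflections off $s+1$ confocal quadrics $\Q_{\alpha_1},\dots,\Q_{\alpha_{s+1}}$. Close up the billiard path as in Proposition \ref{prop:divisor-skew-dim3}, producing a loop with $n$ reflections off $\E$ and one reflection off each $\Q_{\alpha_k}$. Integrating Jacobi's $d-1$ differentials $\sum_{j=1}^d \lambda_j^i\,d\lambda_j/\sqrt{\Pol(\lambda_j)}=0$, $i=0,\dots,d-2$, along this loop and applying Abel's theorem, we obtain
$$
n(P_0-P_{b_1})+\sum_{k=1}^{s+1}\epsilon_k(P_{\alpha_k}-P_{b_{j_k}})+\sum_{l=1}^{d-1}\varepsilon_l(P_{b_{2l}}-P_{b_{2l+1}})\sim 0,
$$
with $\epsilon_k\in\{\pm 1\}$ recording the side of the $k$-th reflection, $\varepsilon_l\in\{0,1\}$ the parities of the oscillation counts of $\lambda_{l+1}$ (reduced via \eqref{eq:divisor-weierstrass}), and $b_{j_k}$ the endpoint of the admissibility interval for the Jacobi coordinate attaining $\alpha_k$. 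In elliptic coordinates, in contrast to Proposition \ref{prop:divisor-skew-dim3}, the $\varepsilon_l$ are unconstrained. By the Riemann-Roch theorem, this divisor relation lifts to a function $f=p(x)+y\,q(x)\in\LL(mP_\infty)$ with $m=n+s+1+\sum_l\varepsilon_l$, having a zero of order $n$ at $P_0$ together with the prescribed Weierstrass zeros, so that the conjugate $\bar f=p(x)-y\,q(x)$ vanishes at the selected lifts of the $\alpha_k$. Forming
$$
f\bar f=p(x)^2-\Pol_{2d-1}(x)\,q(x)^2,
$$
stripping off the Weierstrass factors $(b-x)^{\varepsilon}$ forced by the zeros at the $P_b$, and then applying the squaring trick of Corollary \ref{cor:pell3} (replacing $p$ by $p^2-\tfrac12 x^n r$ to square up the right-hand side), one arrives at \eqref{eq:pell-weak-d} with $r_{s+1}(x)=C\prod_{k=1}^{s+1}(x-\alpha_k)$. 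Pairwise coprimality is then automatic: the Audin Alternative (Lemma \ref{lemma:audin}) keeps the reflection parameters $\alpha_k$ disjoint from the roots of $\Pol_{2d-1}$, and the $\alpha_k$ are all nonzero since the reflections occur strictly inside $\E$.

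For the converse, factor \eqref{eq:pell-weak-d} as
$$
\bigl(p_{n+s+1}-x^n r_{s+1}\bigr)\bigl(p_{n+s+1}+x^n r_{s+1}\bigr)=\Pol_{2d-1}(x)\,q_{n+s+1-d}^2(x).
$$
The assumed pairwise coprimality of the three polynomials forces each factor on the left to decompose as a squarefree divisor of $\Pol_{2d-1}$ times a square. Reading this back on $\Curve$, the function $p_{n+s+1}+y\,q_{n+s+1-d}$ has a zero of order exactly $n$ at $P_0$ and simple zeros at the $s+1$ lifts of the roots of $r_{s+1}$, and its divisor by Abel's theorem produces the displayed equivalence, hence the $s$-skew relation with reflecting quadrics $\Q_{\alpha_k}$ whose parameters are the roots of $r_{s+1}$.

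The main obstacle will be the combinatorial bookkeeping of the parities $\varepsilon_l$ and the signs $\epsilon_k$ uniformly across the geometric types of the quadrics $\Q_{\alpha_k}$: the three-dimensional argument split into six distinct cases handled by hand, and in arbitrary dimension one must verify that the absorption of Weierstrass factors always yields polynomials of the stated degrees $n+s+1$ and $n+s+1-d$, and that $r_{s+1}$ does not collapse below degree $s+1$ by accidental coincidences between some $\alpha_k$ and some $b_j$. The uniqueness of the skew-decomposition $\{\Q_{\alpha_k}\}$ established in \cite{DragRadn2008} is precisely what rules out such coincidences.
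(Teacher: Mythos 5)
Your proposal follows essentially the same route as the paper's proof: first the divisor equivalence on $\Curve$ obtained by closing the path and integrating Jacobi's differentials, then the realization of that equivalence by a function $p(x)+y\,q(x)\in\LL(2mP_\infty)$ with prescribed zeros at $P_0$, the Weierstrass points and the $P_{\alpha_k}^{\sigma_k}$, then the same squaring trick $p\mapsto p^2+\tfrac{c}{2}\rho$ to produce \eqref{eq:pell-weak-d}, and the same coprime factorization $(p-x^nr)(p+x^nr)=\Pol_{2d-1}q^2$ for the converse. The argument is correct and matches the paper's, with your closing remarks on parity bookkeeping and degree non-collapse being a reasonable elaboration of points the paper leaves implicit.
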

\begin{proof}
A trajectory will be $s$-weak $n$-periodic in elliptic coordinates if and only the following relation is satisfied:
\begin{equation}\label{eq:skew-periodic-divisor}
n(P_0-P_{b_1})
+
\sum_{k=1}^{s+1}(P_{\alpha_k}^{\sigma_k}-P_{b_{j_k}})
+
\sum_{l=1}^{d-1}\varepsilon_l(P_{b_{2l}}-P_{b_{2l+1}})
\sim
0,
\end{equation}
with $\varepsilon_l\in\{0,1\}$, and $\sigma_k\in\{+,-\}$.
Here, $j_k\in\{1,\dots,2d-1\}$ and $\Pol_{2d-1}(x)>0$ for all $x$ between $b_{j_k}$ and $\alpha_k$.
The relation \refeq{eq:skew-periodic-divisor} is equivalent to the equation of the following form:
\begin{equation}\label{eq:skew-periodic-divisor2}
nP_0
+
\sum_{k=1}^{s+1}P_{\alpha_k}^{\sigma_k}
+
\sum_{l=1}^{2d-1}\varepsilon_l'P_{b_l}
\sim
\left(n+s+1+\sum_{l=1}^{2d-1}\varepsilon_l'\right)P_{\infty},
\end{equation}
with $\varepsilon_l'\in\{0,1\}$.
We note that the order of the divisors on both sides of the equivalence \refeq{eq:skew-periodic-divisor2} is even, thus denote $2m=n+s+1+\sum_{l=1}^{2d-1}\varepsilon_l'$.

Then $\LL(2mP_{\infty})$ is generated by:
$$
1,x,\dots,x^m,xy,\dots,x^{m-d}y,
$$
and \refeq{eq:skew-periodic-divisor2} is equivalent to the existence of polynomials $p_m(x)$ and $q_{m-d}(x)$ such that $p_m(x)+yq_{m-3}(x)$ has a zero of order $n$ at
{\color{black}
the point $P_0$ on $\Curve$}, zeros of orders $\varepsilon_l'$ at {\color{black}the Weirstrass points
$P_{b_l}$}, while
$p_m(x)+\sigma_k yq_{m-3}(x)$ has a zero at {\color{black}$P_{\alpha_k}$}.
Since $y$ {\color{black}has zeros at $P_{b_l}$}, we have that:
$$
p_m(x)=\prod_{l=1}^{2d-1}(b_{l}-x)^{\varepsilon_l'}p_{m_1}(x).
$$
From there:
\begin{equation}\label{eq:rho_n+s+1}
\begin{aligned}
\rho_{n+s+1}(x)
\ &=
\frac
{(p_m(x)+yq_{m-3}(x))(p_m(x)-yq_{m-3}(x))}
{\prod_{l=1}^{2d-1}(b_{l}-x)^{\varepsilon_l'}}
\\
&=
\prod_{l=1}^{2d-1}(b_{l}-x)^{\varepsilon_l'}p_{m_1}^2(x)
-
\prod_{l=1}^{2d-1}(b_{l}-x)^{1-\varepsilon_l'}q_{m-3}^2(x)
\end{aligned}
\end{equation}
is a polynomial {\color{black} in $x$} of degree $n+s+1$ with a zero of order $n$ at $x=0$ and zeros at $x=\alpha_k$, $1\le k\le s+1$, thus
$\rho_{n+s+1}(x)=c x^n\prod_{k=1}^{s+1}(\alpha_k-s)$,
with $c=(-1)^{\sum\varepsilon_l'+s+1}$.

We set:
$$
p_{n+s+1}(x)
:=
\prod_{l=1}^{2d-1}(b_{l}-x)^{\varepsilon_l'}p_{m_1}^2(x)
+
\frac{c}2\rho_{n+s+1}(x).
$$
We have:
$$
\begin{aligned}
p_{n+s+1}^2(x)
\ &=
\prod_{l=1}^{2d-1}(b_{l}-x)^{\varepsilon_l'}p_{m_1}^2(x)
\left(
\prod_{l=1}^{2d-1}(b_{l}-x)^{\varepsilon_l'}p_{m_1}^2(x)
+
c \rho_{n+s+1}(x)
\right)
+
\frac14 \rho_{n+s+1}^2(x)
\\
&=
p_{m_1}^2(x)\cdot\Pol_{2d-1}(x)q_{m-3}^2(x)
+
\frac14\rho_{n+s+1}^2(x).
\end{aligned}
$$
Thus
$$
q_{n+s+1-d}=p_{m_1}q_{m-3},
\quad
r_{s+1}=\frac12\prod_{k=1}^{s+1}(\alpha_k-x).
$$

Now, suppose that there are polynomials $p_{n+s+1}$, $q_{n+s+1-d}$, $r_{s+1}$ satisfying \refeq{eq:pell-weak-d}.
Then:
$$
(p_{n+s+1}(x)-x^nr_{s+1})
(p_{n+s+1}(x)+x^nr_{s+1})
=
\Pol_{2d-1}(x)q_{n+s+1-d}^2.
$$
Since those polynomials do not have pairwise any common factors, we have:
\begin{gather*}
p_{n+s+1}(x)-x^nr_{s+1}=\rho_1(x)\kappa_1^2(x),
\quad
p_{n+s+1}(x)+x^nr_{s+1}=\rho_2(x)\kappa_2^2(x),
\\
\rho_1\rho_2=\Pol_{2d-1},
\quad
\kappa_1\kappa_2=q_{n+s+1-d}.
\end{gather*}
From there:
$$
\rho_2(x)\kappa_2^2(x)-\rho_1\kappa_1^2(x)
=
2x^nr_{s+1}(x),
$$
which is equivalent to \refeq{eq:rho_n+s+1} and will lead back to the divisor condition and the $s$-skew periodicity of the corresponding trajectories.
\end{proof}

Now we are ready to relate the notion of adjoint resonance of billiard trajectories from  Section \ref{sec:resonant} and $s$-weak periodic from the current Section, see Definitions \ref{def:r-resonant} and \ref{def:s-skew-d}.
\begin{theorem}\label{the:resonant-weak-d} If a billiard trajectory within an ellipsoid in $d$-dimensional space is {\color{black} of adjoint resonance $\hat r$} and $s$-weak periodic, then
\begin{equation}\label{eq:resonanat-skew}
{\color{black} \hat r}+s+2\le d.
\end{equation}
\end{theorem}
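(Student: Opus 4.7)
The plan is to compare the generalized Pell equations provided by Theorem~\ref{th:r-resonant} and Theorem~\ref{the:pell-weak-d}, both of which take the form~\eqref{eq:genPell}. As emphasized before the statement, the Pell equation of Theorem~\ref{th:r-resonant} imposes extra conditions on the locations of the zeros of the polynomial $S$ (stemming from the $(E,m)$-representation), while the Pell equation of Theorem~\ref{the:pell-weak-d} imposes no such conditions. Consequently, for each fixed number $k$ of bounces off $\E$, the minimum admissible $\deg S$ in the restricted problem must be at least as large as the minimum in the unrestricted one.

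First I would establish that $N(k)\ge s_k+1$ for every $k>d$, where $s_k$ denotes the skewness (Definition~\ref{def:s-skew-d}) of the pair of lines containing the first and the $(k+1)$-st segments of the trajectory. Applying Theorem~\ref{the:pell-weak-d} with $n=k$ to the $s_k$-weak $k$-periodic property produces \eqref{eq:pell-weak-d} with $\deg r=s_k+1$. The substitution $z=1/x$, after clearing denominators and absorbing the positive scalar $\prod_l b_l$ into one polynomial factor, converts this identity into an instance of~\eqref{eq:genPell} with $\deg S=s_k+1$. The converse direction of Theorem~\ref{the:pell-weak-d} then forbids any strictly smaller $\deg S$ without contradicting $s_k$ being the skewness of the pair, so the unrestricted minimum equals exactly $s_k+1$; the restricted minimum $N(k)$ is therefore at least $s_k+1$.

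Interpreting ``$s$-weak periodic'' as $s=\min_{n>0}s_n$, the intrinsic minimal skewness of the trajectory, one has $s_k\ge s$ for every $k$, hence $N(k)\ge s+1$ for every $k>d$. Taking the minimum and invoking Definition~\ref{def:adjoint} gives
$$
d-1-\hat r \;=\; N(k_1)\;=\;\min_{k>d}N(k)\;\ge\; s+1,
$$
which rearranges to $\hat r+s+2\le d$.

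The main obstacle I expect is cementing the inequality $N(k)\ge s_k+1$: it rests on identifying $N(k)$ geometrically as the minimal number of confocal reflections required to close the $k$-path, using Remark~\ref{rmk:weakadjoint} and Corollary~\ref{cor:extension}, and then comparing this count with the degree of $S$ produced by the less constrained weak periodic Pell. A subsidiary care point is reading ``$s$-weak periodic'' as an intrinsic statement about the trajectory rather than a statement about one particular $n$; without this reading the chain $s_k\ge s$ would not hold uniformly in $k$, and the direction of the resulting inequality would be reversed.
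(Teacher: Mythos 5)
Your proposal follows essentially the same route as the paper's (quite terse) proof: compare the generalized Pell equation \eqref{eq:r-resonant} with \eqref{eq:pell-weak-d} under $z=1/x$, and extract the inequality from the fact that the resonant equation constrains the zeros of $S_{(k)}$ to the gap intervals (at most one per gap) while the weak-periodic equation does not, so that $N(k)\ge s_k+1$ by minimality in the definition of $s$-skewness. Your version merely fills in the details the paper leaves implicit — in particular the reduction $N(k)\ge s_k+1$ via the converse direction of Theorem~\ref{the:pell-weak-d} and the careful handling of the minimization over $k$ versus the quantifier in ``$s$-weak periodic'' — which is consistent with, and a useful sharpening of, the paper's argument.
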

\begin{proof}
The proof follows from Theorems \ref{th:r-resonant} and \ref{the:pell-weak-d}. One should compare
the equations \eqref{eq:r-resonant} and \eqref{eq:pell-weak-d}, after making the change of variables in the latter $z=1/x$. The inequality in the condition is a consequence of the condition  of Theorem \ref{th:r-resonant} that the polynomial $S_{(k)}$ has at most one zero in each of the {\color{black}closed} intervals $[b_{2(d+1-i)-1}, b_{2(d-i)}]$ and no zeros outside the union of the intervals. There is no analogue of such condition in Theorem \ref{the:pell-weak-d}, see the examples in the following sections.
\end{proof}

\subsection{Examples in dimension $3$}\label{sec:examplesdim3}

\paragraph*{$0$-weak period $3$.}
Suppose that the first and fourth segments of a trajectory
{\color{black} intersect each other on $\Q_{\alpha}$ and satisfy the reflection law off that quadric}.

\emph{Case 1: $\Q_{\alpha}$ is an ellipsoid.} Then one of the following relations must be satisfied:
\begin{itemize}
	\item $3(P_0-P_{b_1})\pm(P_{\alpha}-P_{b_1})\sim0$; or
	\item $3(P_0-P_{a_3})\pm(P_{\alpha}-P_{a_3})+P_{\gamma_1}-P_{\gamma_2}\sim0$, with $\Q_{\gamma_1}$ and $\Q_{\gamma_2}$ both being $1$-sheeted hyperboloids,
\end{itemize}
where the positive sign corresponds to the reflection off $\Q_{\alpha}$ from inside, and the negative one to the reflection from outside.

The first relation is equivalent to $3P_0+P_{\alpha}^{\pm}\sim 4P_{\infty}$.
Since the space $\LL(4 P_{\infty})$ is generated by $\{1,x,x^2\}$, the relation will be equivalent to the existence of a non-trivial quadratic polynomial in $x$ with a triple zero at $x=0$ and a zero at $x=\alpha$, which is not possible.

The second relation is equivalent to
$3P_0+P_{\alpha}^{\pm}+P_{\gamma_1}+P_{\gamma_2}\sim 6P_{\infty}$.
Since the space $\LL(6 P_{\infty})$ is generated by $\{1,x,x^2, x^3, y\}$, that will be equivalent to the existence of a cubic polynomial $p_3(x)$ such that the function $p_3(x)+y$ has a triple zero at $x=0$ and zeros at $x=\gamma_1$, $x=\gamma_2$, while the function $p_3(x)\pm y$ has a zero at $x=\alpha$.
We note that $\gamma_1$ and $\gamma_2$ are zeros of $y$, thus we will have that
$p_3(x)=(\gamma_1-x)(\gamma_2-x)p_1(x)$, for a linear polynomial.
If $p_1(x)=ax+b$, we have:
$$
(\gamma_1-x)(\gamma_2-x)p_1(x)+y=A_0 + b \gamma_1 \gamma_2 + (A_1 - b \gamma_1 - b \gamma_2 + a \gamma_1 \gamma_2) x + (A_2 + b - a \gamma_1 -
a \gamma_2) x^2+\dots,
$$
with $A_0,A_1,A_2,\dots$ defined as in Lemma \ref{lemma:cayley}.
Thus the expression will have a triple zero at $x=0$ for some choice of $a$ and $b$ if and only if:
\begin{equation}\label{eq:det3}
\det\left(
\begin{matrix}
0 & \gamma_1\gamma_2  & A_0\\
\gamma_1\gamma_2 & -\gamma_1-\gamma_2 &  A_1\\
-\gamma_1-\gamma_2 & 1 & A_2
\end{matrix}
\right)
=0.
\end{equation}

Assuming that these conditions are satisfied, we have that the following polynomial of degree $4$ has a triple zero at $x=0$:
\begin{equation}\label{eq:rho}
\rho(x)=\frac{(p_3(x)+y)(p_3(x)-y)}{(\gamma_1-x)(\gamma_2-x)}
=
(\gamma_1-x)(\gamma_2-x)p_1^2(x)-(a_1-x)(a_2-x)(a_3-x),
\end{equation}
and we need it to have also a zero at $x=\alpha$, for some $\alpha<a_3$.

Notice that $\rho$ is a quartic polynomial with the positive leading coefficient, thus $\rho(-\infty)>0$. Also, $\rho(a_3)>0$. On the other hand $x=0$ is a triple zero of $\rho$, which means that the polynomial changes the sign at that point.
That means that $\rho$ will always have another zero $\alpha$, satisfying $\alpha<a_3$.

\emph{Case 2: $\Q_{\alpha}$ is a hyperboloid.}
According to Lemma \ref{lemma:geom-type}, one of the caustics is a hyperboloid of the same geometric type as $\Q_{\alpha}$.
Since the number of bounces off the boundary $\E$ is odd, the other caustic is an ellipsoid.
Thus, we assume $\gamma_1\in(0,a_3)$, and $\gamma_2$, $\alpha$ are both in $(a_3,a_2)$ or both in $(a_2,a_1)$.
The condition for weak $3$-periodicity then is:
$$
3(P_0-P_{\gamma_1})\pm(P_{\alpha}-P_{\gamma_2})\sim0,
$$
where the positive sign corresponds to the reflection off $\Q_{\alpha}$ from inside, and the negative one to the reflection from outside.
That relation is equivalent to
$3P_0+P_{\alpha}^{\pm}+P_{\gamma_1}+P_{\gamma_2}\sim6P_{\infty}$.
Similarly as above, we get that the condition for weak $3$-periodicity is equivalent to the relation \refeq{eq:det3} and the existence of a linear polynomial $p_1$ such that \refeq{eq:rho} has a zero within the corresponding interval.
For $\gamma_2\in(a_3,a_2)$, we have that $\alpha\in(a_3,\gamma_2)$, and we check: $\rho(\gamma_2)<0$, $\rho(a_3)>0$, thus $\rho$ certainly has a zero in the requested interval.
For $\gamma_2\in(a_2,a_1)$, we have that $\alpha\in(\gamma_2,a_1)$, and we check:
$\rho(\gamma_2)<0$, $\rho(a_1)>0$, thus again $\rho$ certainly has a zero in the requested interval.

We conclude that the condition for weak $3$-periodicity is equivalent to the following:
\begin{itemize}
	\item the relations \refeq{eq:det3} are satisfied; and
	\item either one of the caustics is ellipsoid or both caustics are $1$-sheeted hyperboloids.
\end{itemize}

\paragraph*{$0$-weak period $4$.}
Suppose that the first and fifth segments of a trajectory are reflected to each other off quadric $\Q_{\alpha}$.

According to Lemma \ref{lemma:geom-type}, one of the caustics, say $\Q_{\gamma_1}$, is of the same geometric type as $\Q_{\alpha}$.
The following relation must be satisfied:
$$
4(P_0-P_{b_1})\pm(P_{\alpha}-P_{\gamma_1})\sim0,
$$
where the positive sign corresponds to the reflection off $\Q_{\alpha}$ from inside, and the negative one to the reflection from outside.
This is equivalent to $4P_0+P_{\alpha}^{\pm}+P_{\gamma_1}\sim 6P_{\infty}$.
Since the space $\LL(6 P_{\infty})$ is generated by $\{1,x,x^2, x^3, y\}$, that will be equivalent to the existence of a cubic polynomial $p_3(x)$ such that the function $p_3(x)+y$ has a zero of order $4$ at $x=0$ and a zero at $x=\gamma_1$, while the function $p_3(x)\pm y$ has a zero at $x=\alpha$.
We note that $\gamma_1$ is a zero of $y$, thus we will have that
$p_3(x)=(\gamma_1-x)p_2(x)$, for a quadratic polynomial $p_2(x)=ax^2+bx+c$.
We have that the following expression:
$$
(\gamma_1-x)p_2(x)+y
=A_0 + c \gamma_1 + (A_1 - c + b \gamma_1) x + (A_2 - b + a \gamma_1) x^2 + (-a + A_3) x^3+\dots
$$
will have a zero of order $3$ at $x=0$ for some choice of $a$, $b$, $c$ if and only if:
\begin{equation}\label{eq:det4}
\det\left(
\begin{matrix}
0 & 0  & -\gamma_1  & A_0\\
0 & -\gamma_1 & 1 &  A_1\\
-\gamma_1 & 1 & 0 & A_2\\
1 & 0 & 0 & A_3
\end{matrix}
\right)
=0.
\end{equation}
with $A_0,A_1,A_2,\dots$ defined as in Lemma \ref{lemma:cayley}.

Assuming that this condition is satisfied, we have that the following polynomial of degree $5$ has a zero of order $4$ at $x=0$:
\begin{equation}\label{eq:rho5}
\rho_5(x)=\frac{(p_3(x)+y)(p_3(x)-y)}{\gamma_1-x}
=
(\gamma_1-x)p_2^2(x)-(a_1-x)(a_2-x)(a_3-x)(\gamma_2-x),
\end{equation}
and we need that it also has a zero at $x=\alpha$, $\alpha$ lying in the same {\color{black}open} interval $(-\infty,a_3)$, $(a_3,a_2)$, $(a_2,a_1)$ as $\gamma_1$.

\emph{Case 1: $\Q_{\alpha}$ and $\Q_{\gamma_1}$ are ellipsoids.}
Notice that $\rho_5$ is a quintic polynomial with the negative leading coefficient, thus $\rho_5(-\infty)>0$, and also, $\rho_5(\gamma_1)<0$.
From there $\rho_5$ must have a zero in $(-\infty, \gamma_1)$.

\emph{Case 2: $\Q_{\alpha}$ and $\Q_{\gamma_1}$ are $1$-sheeted hyperboloids.}
If $\Q_{\gamma_2}$ is an ellipsoid, then we need to show that $\rho_5$ has a zero in $(a_3,\gamma_1)$. This will be true since $\rho_5(\gamma_1)<0$ and $\rho_5(a_3)>0$.

If $\Q_{\gamma_2}$ is a $1$-sheeted hyperboloid, then we need to show that $\rho_5$ has a zero between $\gamma_1$ and $\gamma_2$.
That will be true since $\sign\rho_5(\gamma_1)=\sign(\gamma_2-\gamma_1)=-\sign\rho_5(\gamma_2)$.

If $\Q_{\gamma_2}$ is a $2$-sheeted hyperboloid, then we need to show that $\rho_5$ has a zero in $(\gamma_1,a_2)$.
That will be true since $\rho_5(\gamma_1)>0$ and $\rho_5(a_2)<0$.

\emph{Case 3: $\Q_{\alpha}$ and $\Q_{\gamma_1}$ are $2$-sheeted hyperboloids.}
Polynomial $\rho_5$ has a zero in $(\gamma_1,a_1)$ since
$\rho_5(\gamma_1)>0$ and $\rho_5(a_1)<0$.

We conclude that  the condition for weak $4$-periodicity is equivalent to \refeq{eq:det4}.

\paragraph*{$0$-weak period $5$.}
Suppose that the first and sixth segments of a trajectory are reflected to each other off quadric $\Q_{\alpha}$.

\emph{Case 1: $\Q_{\alpha}$ is an ellipsoid.} Then one of the following relations must be satisfied:
\begin{itemize}
	\item $5(P_0-P_{b_1})\pm(P_{\alpha}-P_{b_1})\sim0$; or
	\item $5(P_0-P_{a_3})\pm(P_{\alpha}-P_{a_3})+P_{\gamma_1}-P_{\gamma_2}\sim0$, with $\Q_{\gamma_1}$ and $\Q_{\gamma_2}$ both being $1$-sheeted hyperboloids,
\end{itemize}
where the positive sign corresponds to the reflection off $\Q_{\alpha}$ from inside, and the negative one to the reflection from outside.

The first relation is equivalent to $5P_0+P_{\alpha}^{\pm}\sim 6P_{\infty}$.
Since the space $\LL(6 P_{\infty})$ is generated by $\{1,x,x^2, x^3,y\}$, the relation will be equivalent to the existence of a cubic polynomial $p_3(x)$ such that $p_3(x)+y$ has a zero of order $5$ at $x=0$ and $p_3(x)\pm y$ a zero at $x=\alpha$.
The expression $p_3(x)+y$ will have a zero of order $5$ at $x=0$ for some $p_3$ if and only if
$A_4=0$, where $A_4$ is defined as in Lemma \ref{lemma:cayley}.
Assuming that this condition is satisfied, we have that the following polynomial of degree $6$ has a zero of order $5$ at $x=0$:
\begin{equation*}
\rho_6(x)=(p_3(x)+y)(p_3(x)-y)
=
p_3^2(x)-(a_1-x)(a_2-x)(a_3-x)(\gamma_1-x)(\gamma_2-x),
\end{equation*}
and we need that it also has a zero at $x=\alpha$, $\alpha$ lying in the {\color{black}open} interval $(-\infty,a_3)$.
Notice that $\rho_6$ is a polynomial of degree $6$ with the positive leading coefficient, thus $\rho_6(-\infty)>0$. Also, $\rho_6(a_3)>0$. On the other hand $x=0$ is its zero of order $5$, which means that the polynomial changes the sign at that point.
That means that $\rho$ will always have another zero $\alpha$, satisfying $\alpha<a_3$.

The second relation is equivalent to
$5P_0+P_{\alpha}^{\pm}+P_{\gamma_1}+P_{\gamma_2}\sim 8P_{\infty}$.
Since the space $\LL(8 P_{\infty})$ is generated by
$\{1,x,x^2, x^3, x^4, y, xy\}$,
that will be equivalent to the existence of polynomials $p_4(x)$ and $q_1(x)$ of degrees $4$ and $1$ respectfully, such that the function $p_4(x)+q_1(x)y$ has a  zero of order $5$ at
{\color{black}
$P_0$} and zeros at
{\color{black}
$P_{\gamma_1}$, $P_{\gamma_2}$}, while the function $p_4(x)\pm q_1(x)y$ has a zero at
{\color{black}
$P_{\alpha}$}.
We note that $\gamma_1$ and $\gamma_2$ are zeros of $y$, thus we will have that
$p_4(x)=(\gamma_1-x)(\gamma_2-x)p_2(x)$, for a quadratic polynomial $p_2$.
Set $p_2(x)=ax^2+bx+c$ and $q_1(x)=dx+e$ and write the first $5$ terms of the Taylor expansion of $(\gamma_1-x)(\gamma_2-x)p_2(x)+q_1(x)y$:
\begin{gather*}
(A_0 e + c \gamma_1 \gamma_2) + (A_0 d + A_1 e - c \gamma_1 - c \gamma_2 + b \gamma_1 \gamma_2) x + (c + A_1 d +
A_2 e - b \gamma_1 - b \gamma_2 + a \gamma_1 \gamma_2) x^2
\\
+ (b + A_2 d + A_3 e - a \gamma_1 -
a \gamma_2) x^3 + (a + A_3 d + A_4 e) x^4.
\end{gather*}
All the coefficients of these terms should be equal to zero for some non-trivial values of $a$, $b$, $c$, $d$, $e$, which is equivalent to:
\begin{equation}\label{eq:det5}
\det\left(
\begin{matrix}
0 & 0 & \gamma_1\gamma_2 & 0 & A_0\\
0 & \gamma_1\gamma_2 & -\gamma_1-\gamma_2 & A_0 & A_1\\
\gamma_1\gamma_2 & -\gamma_1-\gamma_2 & 1 & A_1 & A_2\\
-\gamma_1-\gamma_2 & 1 & 0& A_2 & A_3\\
1 & 0 & 0 & A_3 & A_4
\end{matrix}
\right)
=0.
\end{equation}
Assuming that this condition is satisfied, we have that the following polynomial of degree $6$ has a zero of order $5$ at $x=0$:
\begin{equation}\label{eq:rho6}
\begin{aligned}
\rho_6(x)&=\frac{(p_4(x)+q_1(x)y)(p_4(x)-q_1(x)y)}{(\gamma_1-x)(\gamma_2-x)}
\\&=
(\gamma_1-x)(\gamma_2-x)p_2^2(x)-(a_1-x)(a_2-x)(a_3-x)q_1^2(x),
\end{aligned}
\end{equation}
and we need it to have also a zero at $x=\alpha$, for some $\alpha<a_3$.

Notice that $\rho_6$ is a polynomial with the positive leading coefficient, thus $\rho_6(-\infty)>0$. Also, $\rho_6(a_3)>0$. On the other hand $x=0$ is a zero of order $5$ of $\rho_6$, which means that the polynomial changes the sign at that point.
That implies that $\rho_6$ will always have another zero $\alpha$, satisfying $\alpha<a_3$.

\emph{Case 2: $\Q_{\alpha}$ is a hyperboloid.}
According to Lemma \ref{lemma:geom-type}, one of the caustics is a hyperboloid of the same geometric type as $\Q_{\alpha}$.
Since the number of bounces off the boundary $\E$ is odd, the other caustic is an ellipsoid.
Thus, we assume $\gamma_1\in(0,a_3)$, and $\gamma_2$, $\alpha$ are both in $(a_3,a_2)$ or both in $(a_2,a_1)$.
The condition for weak $5$-periodicity then is:
$$
5(P_0-P_{\gamma_1})\pm(P_{\alpha}-P_{\gamma_2})\sim0,
$$
where the positive sign corresponds to the reflection off $\Q_{\alpha}$ from inside, and the negative one to the reflection from outside.
That relation is equivalent to
$5P_0+P_{\alpha}^{\pm}+P_{\gamma_1}+P_{\gamma_2}\sim8P_{\infty}$.
Similarly as above, we get that the condition for weak $5$-periodicity is equivalent to the relations \refeq{eq:det5} and the existence of a quadratic polynomial $p_2$ and a linear one $q_1$ such that \refeq{eq:rho6} has a zero within the corresponding interval.
For $\gamma_2\in(a_3,a_2)$, we have that $\alpha\in(a_3,\gamma_2)$, and we check: $\rho_6(\gamma_2)<0$, $\rho(a_3)>0$, thus $\rho_6$ certainly has a zero in the requested interval.
For $\gamma_2\in(a_2,a_1)$, we have that $\alpha\in(\gamma_2,a_1)$, and we check:
$\rho_6(\gamma_2)<0$, $\rho_6(a_1)>0$, thus again $\rho_6$ certainly has a zero in the requested interval.

We conclude that the condition for weak $5$-periodicity is equivalent to the following:
\begin{itemize}
	\item $A_4=0$; or
	\item \refeq{eq:det5} and either one of the caustics is ellipsoid or both caustics are $1$-sheeted hyperboloids.
\end{itemize}

\paragraph*{$0$-weak period $6$.}
Suppose that the first and seventh segments of a trajectory are reflected to each other off quadric $\Q_{\alpha}$.

According to Lemma \ref{lemma:geom-type}, one of the caustics, say $\Q_{\gamma_1}$, is of the same geometric type as $\Q_{\alpha}$.
The following relation must be satisfied:
$$
6(P_0-P_{b_1})\pm(P_{\alpha}-P_{\gamma_1})\sim0,
$$
where the positive sign corresponds to the reflection off $\Q_{\alpha}$ from inside, and the negative one to the reflection from outside.
This is equivalent to $6P_0+P_{\alpha}^{\pm}+P_{\gamma_1}\sim 8P_{\infty}$.
Since the space $\LL(8 P_{\infty})$ is generated by $\{1,x,x^2, x^3, x^4, y, xy\}$, that will be equivalent to the existence of a quartic polynomial $p_4(x)$ and a linear polynomial $q_1(x)$ such that the function $p_4(x)+q_1(x)y$ has a zero of order $6$ at
{\color{black}
$P_0$} and a zero at
{\color{black}
$P_{\gamma_1}$}, while the function $p_4(x)\pm q_1(x)y$ has a zero at
{\color{black}
$P_{\alpha}$}.
We note that $\gamma_1$ is a zero of $y$, thus we will have that
$p_4(x)=(\gamma_1-x)p_3(x)$, for a cubic polynomial $p_3$.
Set $p_3(x)=ax^3+bx^2+cx+d$ and $q_1(x)=ex+f$ and write the first $6$ terms of the Taylor expansion of $(\gamma_1-x)p_3(x)+q_1(x)y$:
\begin{gather*}
A_0 f + d \gamma_1 + (-d + A_0 e + A_1 f + c \gamma_1) x + (-c + A_1 e + A_2 f +
b \gamma_1) x^2 +
\\
+ (-b + A_2 e + A_3 f + a \gamma_1) x^3 + (-a + A_3 e +
A_4 f) x^4 + (A_4 e + A_5 f) x^5.
\end{gather*}
All the coefficients of these terms should be equal to zero for some non-trivial values of $a$, $b$, $c$, $d$, $e$, $f$ which is equivalent to:
\begin{equation}\label{eq:det6}
\det\left(
\begin{matrix}
0 & 0 & 0 & -\gamma_1 & 0 & A_0\\
0 & 0 & -\gamma_1 & 1 & A_0 & A_1\\
0& -\gamma_1 & 1 & 0 & A_1 & A_2\\
-\gamma_1 & 1&  0 & 0 & A_2 & A_3\\
1 & 0 & 0 & 0 & A_3 & A_4\\
0 & 0 & 0 & 0 & A_4 & A_5
\end{matrix}
\right)
=0.
\end{equation}
Assuming that this condition is satisfied, we have that the following polynomial of degree $7$ has a zero of order $6$ at $x=0$:
\begin{equation}\label{eq:rho7}
\begin{aligned}
\rho_7(x)&=\frac{(p_4(x)+q_1(x)y)(p_4(x)-q_1(x)y)}{\gamma_1-x}
\\&
=
(\gamma_1-x)p_3^2(x)-q_1^2(x)(a_1-x)(a_2-x)(a_3-x)(\gamma_2-x),
\end{aligned}
\end{equation}
and we need that it also has a zero at $x=\alpha$, $\alpha$ lying in the same {\color{black}open} interval $(-\infty,a_3)$, $(a_3,a_2)$, $(a_2,a_1)$ as $\gamma_1$.

\emph{Case 1: $\Q_{\alpha}$ and $\Q_{\gamma_1}$ are ellipsoids.}
Notice that $\rho_7$ is an odd degree polynomial with the negative leading coefficient, thus $\rho_7(-\infty)>0$, and also, $\rho_7(\gamma_1)<0$.
From there $\rho_7$ must have a zero in $(-\infty, \gamma_1)$.

\emph{Case 2: $\Q_{\alpha}$ and $\Q_{\gamma_1}$ are $1$-sheeted hyperboloids.}
If $\Q_{\gamma_2}$ is an ellipsoid, then we need to show that $\rho_7$ has a zero in $(a_3,\gamma_1)$. This will be true since $\rho_7(\gamma_1)<0$ and $\rho_7(a_3)>0$.

If $\Q_{\gamma_2}$ is a $1$-sheeted hyperboloid, then we need to show that $\rho_7$ has a zero between $\gamma_1$ and $\gamma_2$.
That will be true since $\sign\rho_7(\gamma_1)=\sign(\gamma_2-\gamma_1)=-\sign\rho_7(\gamma_2)$.

If $\Q_{\gamma_2}$ is a $2$-sheeted hyperboloid, then we need to show that $\rho_7$ has a zero in $(\gamma_1,a_2)$.
That will be true since $\rho_7(\gamma_1)>0$ and $\rho_7(a_2)<0$.

\emph{Case 3: $\Q_{\alpha}$ and $\Q_{\gamma_1}$ are $2$-sheeted hyperboloids.}
Polynomial $\rho_7$ has a zero in $(\gamma_1,a_1)$ since
$\rho_7(\gamma_1)>0$ and $\rho_7(a_1)<0$.

We conclude that  the condition for weak $6$-periodicity is equivalent to \refeq{eq:det6}.

\paragraph*{$0$-weak  period $7$.}
Suppose that the first and eighth segments of a trajectory are reflected to each other off quadric $\Q_{\alpha}$.

\emph{Case 1: $\Q_{\alpha}$ is an ellipsoid.} Then one of the following relations must be satisfied:
\begin{itemize}
	\item $7(P_0-P_{b_1})\pm(P_{\alpha}-P_{b_1})\sim0$; or
	\item $7(P_0-P_{a_3})\pm(P_{\alpha}-P_{a_3})+P_{\gamma_1}-P_{\gamma_2}\sim0$, with $\Q_{\gamma_1}$ and $\Q_{\gamma_2}$ both being $1$-sheeted hyperboloids,
\end{itemize}
where the positive sign corresponds to the reflection off $\Q_{\alpha}$ from inside, and the negative one to the reflection from outside.

The first relation is equivalent to $7P_0+P_{\alpha}^{\pm}\sim 8P_{\infty}$.
Since the space $\LL(8 P_{\infty})$ is generated by
$\{1,x,x^2, x^3, x^4, y, xy\}$,
the relation will be equivalent to the existence of a quartic polynomial $p_4(x)$ and a linear polynomial $q_1(x)$ such that $p_4(x)+yq_1(x)$ has a zero of order $7$ at
{\color{black}
$P_0$} and $p_4(x)\pm yq_1(x)$ a zero at
{\color{black}
$P_{\alpha}$}.
The expression $p_4(x)+ yq_1(x)$ will have a zero of order $7$ at
{\color{black}
$P_0$} if and only if
$$
\det\left(
\begin{array}{cc}
A_4 & A_5\\
A_5 & A_6
\end{array}
\right)=0,
$$
where coefficients $A_j$ are defined as in Lemma \ref{lemma:cayley}.
Assuming that this condition is satisfied, we have that the following polynomial of degree $8$ has a zero of order $7$ at $x=0$:
\begin{equation*}
\begin{aligned}
\rho_8(x)\ &=(p_4(x)+yq_1(x))(p_3(x)-yq_1(x))\\
&=
p_4^2(x)-(a_1-x)(a_2-x)(a_3-x)(\gamma_1-x)(\gamma_2-x)q_1^2(x),
\end{aligned}
\end{equation*}
and we need that it also has a zero at $x=\alpha$, $\alpha$ lying in the {\color{black}open} interval $(-\infty,a_3)$.
Notice that $\rho_8$ is a polynomial of degree $8$ with the positive leading coefficient, thus $\rho_8(-\infty)>0$. Also, $\rho_8(a_3)>0$. On the other hand $x=0$ is its zero of order $7$, which means that the polynomial changes the sign at that point.
That means that $\rho$ will always have another zero $\alpha$, satisfying $\alpha<a_3$.

The second relation is equivalent to
$7P_0+P_{\alpha}^{\pm}+P_{\gamma_1}+P_{\gamma_2}\sim 10P_{\infty}$.
Since the space $\LL(10 P_{\infty})$ is generated by
$\{1,x,x^2, x^3, x^4, x^5, y, xy, x^2y\}$,
that will be equivalent to the existence of polynomials $p_5(x)$ and $q_2(x)$ of degrees $5$ and $2$ respectfully, such that the function $p_5(x)+q_2(x)y$ has a  zero of order $7$ at
{\color{black}
$P_0$}
 and zeros at
{\color{black}
 $P_{\gamma_1}$, $P_{\gamma_2}$}, while the function $p_5(x)\pm q_2(x)y$ has a zero at
{\color{black}
$P_{\alpha}$}.
We note that $\gamma_1$ and $\gamma_2$ are zeros of $y$, thus we will have that
$p_5(x)=(\gamma_1-x)(\gamma_2-x)p_3(x)$, for a cubic polynomial $p_3$.
Set $p_3(x)=ax^3+bx^2+cx+d$ and $q_2(x)=ex^2+fx+g$ and write the first $7$ terms of the Taylor expansion of $(\gamma_1-x)(\gamma_2-x)p_3(x)+q_2(x)y$:
\begin{gather*}
(A_0 g + d \gamma_1 \gamma_2) + (A_0 f + A_1 g - d \gamma_1 - d \gamma_2 + c \gamma_1 \gamma_2) x
\\
+ (d + A_0 e +
A_1 f + A_2 g - c \gamma_1 - c \gamma_2 + b \gamma_1 \gamma_2) x^2
\\
+ (c + A_1 e + A_2 f +
A_3 g - b \gamma_1 - b \gamma_2 + a \gamma_1 \gamma_2) x^3 + (b + A_2 e + A_3 f + A_4 g -
a \gamma_1 - a \gamma_2) x^4
\\
+ (a + A_3 e + A_4 f + A_5 g) x^5 + (A_4 e + A_5 f +
A_6 g) x^6
\end{gather*}
All the coefficients of these terms should be equal to zero for some non-trivial values of $a$, $b$, $c$, $d$, $e$, $f$, $g$ which is equivalent to:
\begin{equation}\label{eq:det7}
\det\left(
\begin{matrix}
0 & 0 & 0 & \gamma_1\gamma_2 & 0 & 0 & A_0\\
0 & 0 &  \gamma_1\gamma_2 &  -\gamma_1-\gamma_2 & 0 & A_0 & A_1\\
0& \gamma_1\gamma_2 & -\gamma_1-\gamma_2 & 1 & A_0 & A_1 & A_2\\
\gamma_1\gamma_2 & -\gamma_1-\gamma_2 & 1 & 0& A_1&  A_2 & A_3\\
-\gamma_1-\gamma_2 & 1 & 0 & 0 & A_2 & A_3 & A_4\\
1 & 0 & 0 & 0 & A_3 & A_4 & A_5\\
0 & 0 & 0 & 0 & A_4 & A_5 & A_6
\end{matrix}
\right)
=0.
\end{equation}
Assuming that this condition is satisfied, we have that the following polynomial of degree $8$ has a zero of order $7$ at $x=0$:
\begin{equation}\label{eq:rho8}
\begin{aligned}
\rho_8(x)&=\frac{(p_5(x)+q_2(x)y)(p_5(x)-q_2(x)y)}{(\gamma_1-x)(\gamma_2-x)}
\\&=
(\gamma_1-x)(\gamma_2-x)p_3^2(x)-(a_1-x)(a_2-x)(a_3-x)q_2^2(x),
\end{aligned}
\end{equation}
and we need it to have also a zero at $x=\alpha$, for some $\alpha<a_3$.

Notice that $\rho_8$ is a polynomial with the positive leading coefficient, thus $\rho_8(-\infty)>0$. Also, $\rho_8(a_3)>0$. On the other hand $x=0$ is a zero of order $7$ of $\rho_8$, which means that the polynomial changes the sign at that point.
That implies that $\rho_8$ will always have another zero $\alpha$, satisfying $\alpha<a_3$.

\emph{Case 2: $\Q_{\alpha}$ is a hyperboloid.}
According to Lemma \ref{lemma:geom-type}, one of the caustics is a hyperboloid of the same geometric type as $\Q_{\alpha}$.
Since the number of bounces off the boundary $\E$ is odd, the other caustic is an ellipsoid.
Thus, we assume $\gamma_1\in(0,a_3)$, and $\gamma_2$, $\alpha$ are both in $(a_3,a_2)$ or both in $(a_2,a_1)$.
The condition for weak $7$-periodicity then is:
$$
7(P_0-P_{\gamma_1})\pm(P_{\alpha}-P_{\gamma_2})\sim0,
$$
where the positive sign corresponds to the reflection off $\Q_{\alpha}$ from inside, and the negative one to the reflection from outside.
That relation is equivalent to
$7P_0+P_{\alpha}^{\pm}+P_{\gamma_1}+P_{\gamma_2}\sim10P_{\infty}$.
Similarly as above, we get that the condition for weak $7$-periodicity is equivalent to the relations \refeq{eq:det7} and the existence of a cubic polynomial $p_3$ and a quadratic one $q_2$ such that \refeq{eq:rho8} has a zero within the corresponding interval.
For $\gamma_2\in(a_3,a_2)$, we have that $\alpha\in(a_3,\gamma_2)$, and we check: $\rho_8(\gamma_2)<0$, $\rho_8(a_3)>0$, thus $\rho_8$ certainly has a zero in the requested interval.
For $\gamma_2\in(a_2,a_1)$, we have that $\alpha\in(\gamma_2,a_1)$, and we check:
$\rho_8(\gamma_2)<0$, $\rho_8(a_1)>0$, thus again $\rho_8$ certainly has a zero in the requested interval.

We conclude that the condition for weak $7$-periodicity is equivalent to the following:
\begin{itemize}
	\item $\det\left(
	\begin{array}{cc}
	A_4 & A_5\\
	A_5 & A_6
	\end{array}
	\right)=0$; or
	\item \refeq{eq:det7} and either one of the caustics is ellipsoid or both caustics are $1$-sheeted hyperboloids.
\end{itemize}

\subsection{$1$-weak periodic trajectories in dimension $4$}\label{sec:1weakdim4}
Let us study trajectories  within an ellipsoid in the $4$-dimensional space which are $1$ weak $n$-periodic.
More precisely, assume a trajectory becomes periodic in elliptic coordinates after $n$ reflections off ellipsoid $\E$, one reflection of quadric $\Q_{\alpha_1}$, and one off $\Q_{\alpha_2}$.
Additionally, we will suppose that $\Q_{\alpha_1}$ and $\Q_{\alpha_2}$ are of the same type.

The divisor condition for such a trajectory is:
$$
n(P_0-P_{b_1})\pm(P_{\alpha_1}-P_{b_{j_1}})\pm(P_{\alpha_2}-P_{b_{j_2}})
+
\varepsilon_1(P_{b_2}-P_{b_3})
+
\varepsilon_2(P_{b_4}-P_{b_5})
+
\varepsilon_3(P_{b_6}-P_{b_7})
\sim
0,
$$
with $\varepsilon_k\in\{0,1\}$ and polynomial $\Pol$ is positive everywhere between $\alpha_k$ and $b_{j_k}$.
This is equivalent to:
$$
nP_0+\eta P_{b_1}+P_{\alpha_1}^{\pm}+P_{\alpha_2}^{\pm}
+
\varepsilon_1'(P_{b_2}+P_{b_3})
+
\varepsilon_2'(P_{b_4}+P_{b_5})
+
\varepsilon_3'(P_{b_6}+P_{b_7})
\sim
2mP_{\infty},
$$
with $2m=n+\eta+2+2\varepsilon_1'+2\varepsilon_2'+2\varepsilon_3'$, and $\eta=1$ if $n$ is odd and $\eta=0$ if $n$ is even.
The basis of $\LL(2mP_{\infty})$ is:
$$
1,x,x^2,\dots,x^m, y, xy, \dots, x^{m-4}y,
$$
so the divisor condition is equivalent to the existence of polynomials $p_m(x)$ and $q_{m-4}(x)$ such that $p_m(x)+yq_{m-4}(x)$ has a zero of order $n$ at
{\color{black}
$P_0$}, zero of order $\eta$ at
{\color{black}
$P_{b_1}$}, zeros of order $\varepsilon_k'$ at {\color{black}$P_{b_{2k}}$} and
{\color{black}$P_{b_{2k+1}}$},
while $p_m(x)\pm yq_{m-4}(x)$ has zeros at
{\color{black}
$P_{\alpha_1}$ and $P_{\alpha_2}$}.
Then the polynomial
$$
\begin{aligned}
\rho_{n+2}(x)
\ &=
\frac{(p_m(x)+yq_{m-4}(x))(p_m(x)-yq_{m-4}(x))}
{\rho_1(x)}
\\
&=
\rho_1(x)p_{\frac12(n-\eta)+1}^2(x)-\rho_2(x)q_{m-4}^2(x),
\end{aligned}
$$
has a zero of order $n$ at $x=0$ and simple zeros $x=\alpha_1$ and $x=\alpha_2$,
where $\rho_1\rho_2=\Pol$ and
$$
\rho_1(x)=(b_1-x)^{\eta}(b_2-x)^{\varepsilon_1'} (b_3-x)^{\varepsilon_1'} (b_4-x)^{\varepsilon_2'}
(b_5-x)^{\varepsilon_2'} (b_6-x)^{\varepsilon_3'} (b_7-x)^{\varepsilon_3'}.
$$
That implies:
$$
\rho_1(x)p_{\frac12(n-\eta)+1}^2(x)-\rho_2(x)q_{m-4}^2(x)
=
x^n r_2(x).
$$
Now, relation (\ref{eq:weak-pell}) derived from there implies that $r_2$ is positive where $\Pol$ is negative.
Also, $r_2$ has the same signs at the endpoints of the {\color{black}closed intervals} $[b_{2j},b_{2j+1}]$, its zeros correspond to two quadrics of the same type.


\subsection*{Acknowledgment} {\color{black} The authors are grateful to Klaus Schiefermayr for inspirative discussions. The authors thank the referees for numerous suggestions and comments which significantly helped to improve the presentation.
The research  was supported
by the Australian Research Council, Discovery Project 190101838 \emph{Billiards within quadrics and beyond}, by the Simons Foundation grant no. 854861,  by Mathematical Institute of the Serbian Academy of Sciences and Arts, the Science Fund of Serbia grant Integrability and Extremal Problems in Mechanics, Geometry and
Combinatorics, MEGIC, Grant No. 7744592 and the Ministry for Education, Science, and Technological Development of Serbia.
}

\begin{bibdiv}
\begin{biblist}

\bib{AbendaFed2006}{article}{
    title={Closed geodesics and billiards on quadrics related to elliptic KdV solutions},
    author={Abenda, Simonetta},
    author={Fedorov, Yuri},
    journal={Letters in Mathematical Physics},
    volume={76},
    date={2006},
    pages={111--134}
}

\bib{AbendaGrin2010}{article}{
   author={Abenda, Simonetta},
   author={Grinevich, Petr G.},
   title={Periodic billiard orbits on $n$-dimensional ellipsoids with
   impacts on confocal quadrics and isoperiodic deformations},
   journal={J. Geom. Phys.},
   volume={60},
   date={2010},
   number={10},
   pages={1617--1633}
}

\bib{ADR2019rcd}{article}{
	AUTHOR = {Adabrah, Anani Komla},
AUTHOR = {Dragovi\'{c}, Vladimir},
AUTHOR = {Radnovi\'{c},
		Milena},
	TITLE = {Periodic billiards within conics in the {M}inkowski plane and
		{A}khiezer polynomials},
	JOURNAL = {Regul. Chaotic Dyn.},
	FJOURNAL = {Regular and Chaotic Dynamics. International Scientific
		Journal},
	VOLUME = {24},
	YEAR = {2019},
	NUMBER = {5},
	PAGES = {464--501},
	ISSN = {1560-3547},
	MRCLASS = {37D50 (26C05 70H06)},
	MRNUMBER = {4015392},
	DOI = {10.1134/S1560354719050034},
	URL = {https://doi.org/10.1134/S1560354719050034},
}

\bib{AhiezerAPPROX}{book}{
   author={Ahiezer, N. I.},
   title={Lekcii po Teorii Approksimacii},
   language={Russian},
   publisher={OGIZ, Moscow-Leningrad},
   date={1947},
   pages={323}
}

\bib{ADR2020rj}{article}{
	AUTHOR = {Andrews, George E.},
AUTHOR = {Dragovi\'{c}, Vladimir},
AUTHOR = {Radnovi\'{c},
		Milena},
	TITLE = {Combinatorics of periodic ellipsoidal billiards},
	JOURNAL = {The Ramanujan Journal, arXiv: 1908.01026.},
	FJOURNAL = {},
	VOLUME = {},
	YEAR = {},
	NUMBER = {},
	PAGES = {},
	ISSN = {},
	MRCLASS = {},
	MRNUMBER = {},
	DOI = {},
	URL = {}
}

\bib{ArnoldMMM}{book}{
   author={Arnol\cprime d, V. I.},
   title={Mathematical methods of classical mechanics},
   series={Graduate Texts in Mathematics},
   volume={60},
   edition={2},
   note={Translated from the Russian by K. Vogtmann and A. Weinstein},
   publisher={Springer-Verlag, New York},
   date={1989},
   pages={xvi+508}
}

\bib{Audin1994}{article}{
    author={Audin, Mich\`ele},
    title={Courbes alg\'ebriques et syst\`emes int\'egrables:
g\'eodesiques des quadriques},
    journal={Exposition. Math.},
    volume={12},
    date={1994},
    pages={193--226}
}

\bib{ADSK}{article}{
	author={Avila, Artur},
	author={De Simoi, Jacopo},
	author={Kaloshin, Vadim},
	title={An integrable deformation of an ellipse of small eccentricity is
		an ellipse},
	journal={Ann. of Math. (2)},
	volume={184},
	date={2016},
	number={2},
	pages={527--558},
}


\bib{bm}{article}{
	author={Bialy, Misha},
	author={Mironov, Andrey E.},
	title={Angular billiard and algebraic Birkhoff conjecture},
	journal={Adv. Math.},
	volume={313},
	date={2017},
	pages={102--126},
}

\bib{bm1}{article}{
	author={Bialy, Misha},
	author={Mironov, Andrey E.},
	title={The Birkhoff-Poritsky conjecture for centrally-symmetric billiard
		tables},
	journal={Ann. of Math. (2)},
	volume={196},
	date={2022},
	number={1},
	pages={389--413},
}

\bib{bol}{article}{
	author={Bolotin, S. V.},
	title={Integrable billiards on surfaces of constant curvature},
	language={Russian},
	journal={Mat. Zametki},
	volume={51},
	date={1992},
	number={2},
	pages={20--28, 156},
	translation={
		journal={Math. Notes},
		volume={51},
		date={1992},
		number={1-2},
		pages={117--123},
	},
}

\bib{BDFRR2002}{article}{
   author={Bolotin, S.},
   author={Delshams, A.},
   author={Fedorov, Yu.},
   author={Ram\'\i rez-Ros, R.},
   title={Bi-asymptotic billiard orbits inside perturbed ellipsoids},
   conference={
      title={Progress in nonlinear science, Vol. 1},
      address={Nizhny Novgorod},
      date={2001},
   },
   book={
      publisher={RAS, Inst. Appl. Phys., Nizhniy Novgorod},
   },
   date={2002},
   pages={48--62}
}

\bib{CRR2011}{article}{
   author={Casas, Pablo S.},
   author={Ram\'\i rez-Ros, Rafael},
   title={The frequency map for billiards inside ellipsoids},
   journal={SIAM J. Appl. Dyn. Syst.},
   volume={10},
   date={2011},
   number={1},
   pages={278--324}
}

\bib{CRR2012}{article}{
   author={Casas, Pablo S.},
   author={Ram\'\i rez-Ros, Rafael},
   title={Classification of symmetric periodic trajectories in ellipsoidal
   billiards},
   journal={Chaos},
   volume={22},
   date={2012},
   number={2},
   pages={026110, 24}
}

\bib{Cayley1854}{article}{
    author={Cayley, Arthur},
    title={Developments on the porism of the in-and-circumscribed polygon},
    journal={Philosophical magazine},
    volume={7},
    date={1854},
    pages={339--345}
}

\bib{CCS1993}{article}{
    author={Chang, Shau-Jin},
    author={Crespi, Bruno},
    author={Shi, Kang-Jie},
    title={Elliptical billiard systems and the full Poncelet's theorem in $n$ dimensions},
    journal={J. Math. Phys.},
    volume={34},
    number={6},
    date={1993},
    pages={2242--2256}
}

\bib{Dar1870}{article}{
    author={Darboux, Gaston},
    title={
    Sur les polygones inscrits et circonscrits a l'ellipsoide
        },
    journal={Bulletin de
la Societe philomathique},
      volume={7},
      date={1870},
      pages={92-94}
}

\bib{DarbouxSUR}{book}{
    author={Darboux, Gaston},
    title={
        Le\c{c}ons sur la th\'eorie
        g\'en\'erale des surfaces et les
        applications g\'eo\-m\'etri\-ques du
        calcul infinitesimal
    },
    publisher={Gauthier-Villars},
    address={Paris},
     date={1914},
     volume={2 and 3}
}

\bib{DragRadn1998a}{article}{
    author={Dragovi\'c, Vladimir},
    author={Radnovi\'c, Milena},
    title={Conditions of Cayley's type for ellipsoidal billiard},
    journal={J. Math. Phys.},
    volume={39},
    date={1998},
    number={1},
    pages={355--362}
}

\bib{DragRadn1998b}{article}{
    author={Dragovi\'c, Vladimir},
    author={Radnovi\'c, Milena},
    title={Conditions of Cayley's type for ellipsoidal billiard},
    journal={J. Math. Phys.},
    volume={39},
    date={1998},
    number={11},
    pages={5866--5869}
}

\bib{DragRadn2004}{article}{
    author={Dragovi\'c, Vladimir},
    author={Radnovi\'c, Milena},
    title={Cayley-type conditions for billiards within $k$ quadrics in $\mathbf R^d$},
    journal={J. of Phys. A: Math. Gen.},
    volume={37},
    pages={1269--1276},
    date={2004}
}

\bib{DragRadn2006jmpa}{article}{
    author={Dragovi\'c, Vladimir},
    author={Radnovi\'c, Milena},
    title={Geometry of integrable billiards and pencils of quadrics},
    journal={Journal Math. Pures Appl.},
    volume={85},
    date={2006},
    pages={758--790}
}

\bib{DragRadn2008}{article}{
    author={Dragovi\'c, Vladimir},
    author={Radnovi\'c, Milena},
    title={Hyperelliptic Jacobians as Billiard Algebra of Pencils of Quadrics: Beyond Poncelet Porisms},
    journal={Adv.  Math.},
    volume={219},
    date={2008},
    number={5},
    pages={1577--1607}
}

\bib{DragRadn2011book}{book}{
    author={Dragovi\'c, Vladimir},
    author={Radnovi\'c, Milena},
    title={Poncelet Porisms and Beyond},
    publisher={Springer Birkhauser},
    date={2011},
    place={Basel}
}

\bib{DragRadn2012adv}{article}{
	author={Dragovi\'c, Vladimir},
	author={Radnovi\'c, Milena},
	title={Ellipsoidal billiards in pseudo-Euclidean spaces and relativistic quadrics},
	journal={Advances in Mathematics},
	volume={231},
	pages={1173--1201},
	date={2012}
}

\bib{DR1}{article}{
	author={Dragovi\'{c}, Vladimir},
	author={Radnovi\'{c}, Milena},
	title={Pseudo-integrable billiards and arithmetic dynamics},
	journal={J. Mod. Dyn.},
	volume={8},
	date={2014},
	number={1},
	pages={109--132},
	issn={1930-5311},
	review={\MR{3296569}},
	doi={10.3934/jmd.2014.8.109},
}

\bib{DragRadn2018}{article}{
	AUTHOR = {Dragovi\'{c}, Vladimir},
AUTHOR = {Radnovi\'{c}, Milena},
	TITLE = {Periodic {E}llipsoidal {B}illiard {T}rajectories and
		{E}xtremal {P}olynomials},
	JOURNAL = {Comm. Math. Phys.},
	FJOURNAL = {Communications in Mathematical Physics},
	VOLUME = {372},
	YEAR = {2019},
	NUMBER = {1},
	PAGES = {183--211},
	ISSN = {0010-3616},
	MRCLASS = {37D50 (33C47)},
	MRNUMBER = {4031799},
	DOI = {10.1007/s00220-019-03552-y},
	URL = {https://doi.org/10.1007/s00220-019-03552-y},
}

\bib{DragRadn2019rcd}{article}{
	author={Dragovi\'{c}, Vladimir},
	author={Radnovi\'{c}, Milena},
	title={Caustics of Poncelet polygons and classical extremal polynomials},
	journal={Regul. Chaotic Dyn.},
	volume={24},
	date={2019},
	number={1},
	pages={1--35}
}

\bib{DuistermaatBOOK}{book}{
   author={Duistermaat, Johannes J.},
   title={Discrete integrable systems: QRT maps and elliptic surfaces},
   series={Springer Monographs in Mathematics},
   publisher={Springer},
   place={New York},
   date={2010},
   pages={xxii+627},
   isbn={978-1-4419-7116-6}
}

\bib{Fed2001}{article}{
    author={Fedorov, Yuri},
    title={An ellipsoidal billiard with quadratic potential},
    journal={Funct. Anal. Appl.},
    volume={35},
    date={2001},
    number={3},
    pages={199--208}
}

\bib{glu1}{article}{
	author={Glutsyuk, Alexey},
	title={On polynomially integrable Birkhoff billiards on surfaces of
		constant curvature},
	journal={J. Eur. Math. Soc. (JEMS)},
	volume={23},
	date={2021},
	number={3},
	pages={995--1049},
}

\bib{GrifHar1978}{article}{
    author={Griffiths, Philip},
    author={Harris, Joe},
    title={On Cayley's explicit solution to Poncelet's porism},
    journal={EnsFeign. Math.},
    volume={24},
    date={1978},
    number={1-2},
    pages={31--40}
}

\bib{Hal1888}{book}{
    author={Halphen, G.-H.},
    title={Trait\' e des fonctiones elliptiques et de leures applications},
    part={deuxieme partie},
    publisher={Gauthier-Villars et fils},
    address={Paris},
    date={1888}
}

\bib{Jac}{book}{
author={Jacobi, C.},
title={Jacobi's Lectures on Dynamics},
publisher={Springer, TRIM 51},
date={2009}
}

\bib{KS}{article}{
	author={Kaloshin, Vadim},
	author={Sorrentino, Alfonso},
	title={On the local Birkhoff conjecture for convex billiards},
	journal={Ann. of Math. (2)},
	volume={188},
	date={2018},
	number={1},
	pages={315--380},
	issn={0003-486X},
	review={\MR{3815464}},
	doi={10.4007/annals.2018.188.1.6},
}

\bib{KTBilliards}{book}{
	author={Kozlov, Valeri\u{\i} V.},
	author={Treshch\"{e}v, Dmitri\u{\i} V.},
	title={Billiards},
	series={Translations of Mathematical Monographs},
	volume={89},
	note={A genetic introduction to the dynamics of systems with impacts;
		Translated from the Russian by J. R. Schulenberger},
	publisher={American Mathematical Society, Providence, RI},
	date={1991},
}

\bib{KLN1990}{article}{
   author={Kre\u\i n, M. G.},
   author={Levin, B. Ya.},
   author={Nudel\cprime man, A. A.},
   title={On special representations of polynomials that are positive on a
   system of closed intervals, and some applications},
   note={Translated from the Russian by Lev J. Leifman and Tatyana L.
   Leifman},
   conference={
      title={Functional analysis, optimization, and mathematical economics},
   },
   book={
      publisher={Oxford Univ. Press, New York},
   },
   date={1990},
   pages={56--114}
}

\bib{MV1991}{article}{
   author={Moser, J.},
   author={Veselov, A. },
   title={Discrete versions of some classical integrable
systems and factorization of matrix polynomials},
   journal={Comm. Math. Phys.},
   volume={139},
   date={1991},
   number={2},
   pages={217--243}
}

\bib{PS1999}{article}{
   author={Peherstorfer, F.},
   author={Schiefermayr, K.},
   title={Description of extremal polynomials on several intervals and their
   computation. I, II},
   journal={Acta Math. Hungar.},
   volume={83},
   date={1999},
   number={1-2},
   pages={27--58, 59--83}
}

\bib{Poncelet1822}{book}{
    author={Poncelet, Jean Victor},
    title={Trait\'e des propri\'et\'es projectives des figures},
    publisher={Mett},
    address={Paris},
    date={1822}
}

\bib{Radn2015}{article}{
    author={Radnovi\'c, Milena},
    title={Topology of the elliptical billiard with the Hooke's potential},
    journal={Theoretical and Applied Mechanics},
    date={2015},
    volume={42},
    number={1},
    pages={1--9}
}

\bib{RR2014}{article}{
   author={Ram\'\i rez-Ros, Rafael},
   title={On Cayley conditions for billiards inside ellipsoids},
   journal={Nonlinearity},
   volume={27},
   date={2014},
   number={5},
   pages={1003--1028}
}

\bib{Si2011}{book}{
    author={Simon, Barry},
    title={
    Szeg\"o's Theorem and its Descendants
    },
    publisher={Princeton University Press},
    address={Princetin and Oxford},
     date={2011}
}

\bib{Si2015}{book}{
    author={Simon, Barry},
    title={Operator Theory, A Comperhansive Course in Analysis. Part 4 },
    publisher={AMS},
     date={2015}
}

\bib{SodinYu1992}{article}{
   author={Sodin, M. L.},
   author={Yuditski\u\i , P. M.},
   title={Functions that deviate least from zero on closed subsets of the
   real axis},
   language={Russian, with Russian summary},
   journal={Algebra i Analiz},
   volume={4},
   date={1992},
   number={2},
   pages={1--61},
   issn={0234-0852},
   translation={
      journal={St. Petersburg Math. J.},
      volume={4},
      date={1993},
      number={2},
      pages={201--249},
      issn={1061-0022},
   },
}

\bib{Sp1957}{book}{
   author={Springer, George},
   title={Introduction to Riemann Surfaces},
   publisher={AMS Chelsea Publishing},
   date={1957},
   pages={309}
}

\bib{Tab}{book}{
	author={Tabachnikov, Serge},
	title={Geometry and billiards},
	series={Student Mathematical Library},
	volume={30},
	publisher={American Mathematical Society, Providence, RI; Mathematics
		Advanced Study Semesters, University Park, PA},
	date={2005},
	pages={xii+176},
	isbn={0-8218-3919-5},
	review={\MR{2168892}},
	doi={10.1090/stml/030},
}

\bib{Tjurin1975}{article}{
	author={Tjurin, A. N.},
	title={The intersection of quadrics},
	language={Russian},
	journal={Uspehi Mat. Nauk},
	volume={30},
	date={1975},
	number={6(186)},
	pages={51--99},
}

\bib{Wiersig2000}{article}{
   author={Wiersig, Jan},
   title={Ellipsoidal billiards with isotropic harmonic potentials},
   journal={Internat. J. Bifur. Chaos Appl. Sci. Engrg.},
   volume={10},
   date={2000},
   number={9},
   pages={2075--2098}
}

\bib{WDullin2002}{article}{
    author={Waalkens, H.},
    author={Dullin, H. R.},
    title={Quantum Monodromy in Prolate Ellipsoidal Billiards},
    journal={Annals of Physics},
    volume={295},
    date={2002},
    number={1},
    pages={81--112}
}

\end{biblist}
\end{bibdiv}

\end{document}